\documentclass{article}
\usepackage{graphicx} 
\usepackage{bbm}
\usepackage[margin=3cm]{geometry}
\usepackage[ruled,vlined]{algorithm2e}
\usepackage{amsthm,amssymb,mathtools,enumitem}
\usepackage{hyperref}
\usepackage{theoremref}
\usepackage{tikz-cd}
\usepackage[dvipsnames]{xcolor}
\hypersetup{colorlinks=true, citecolor=ForestGreen, linkcolor=BrickRed, urlcolor=Blue}
\usepackage[nottoc]{tocbibind}
\usepackage[affil-it]{authblk}

\newtheorem{thm}{Theorem}[section]

\newtheorem{prop}[thm]{Proposition}
\newtheorem{lem}[thm]{Lemma}

\theoremstyle{definition}

\newtheorem{exa}[thm]{Example}
\newtheorem{rmk}[thm]{Remark}

\def\ol{\overline}
\def\y{\lambda}

\def\Q{\mathbb Q}
\def\Z{\mathbb Z}
\def\R{\mathbb R}

\def\ymax{\y_{\max}}
\def\ymin{\y_{\min}}
\DeclareMathOperator{\Aut}{Aut}

\usetikzlibrary{arrows,decorations.markings}
\tikzset{myptr/.style={decoration={markings,mark=at position 1 with %
    {\arrow[scale=2,>=stealth]{>}}},postaction={decorate}}}

\title{Hoffman colorability of graphs with smallest eigenvalue at least $-2$}
\author{Bart De Bruyn\thanks{\texttt{Bart.DeBruyn@ugent.be}}}
\affil{Department of Mathematics, Computer Science and Statistics, Ghent University, Belgium.}
\author{Thijs van Veluw\thanks{\texttt{Thijs.vanVeluw@ugent.be}, \texttt{t.j.v.veluw@tue.nl}}}
\affil{Department of Mathematics, Computer Science and Statistics, Ghent University, Belgium,\\
Department of Mathematics and Computer Science, Eindhoven University of Technology, The Netherlands.
}

\date{}

\begin{document}

\maketitle

\begin{abstract}
    In accordance with the Cameron-Goethals-Seidel-Shult Classification Theorem, we extend the characterization of Hoffman colorability of line graphs from (Abiad, Bosma, Van Veluw, 2025) to all connected graphs with smallest eigenvalue at least $-2$; we give a characterization of Hoffman colorability of generalized line graphs, and we completely classify the Hoffman colorable exceptional graphs. The 245 Hoffman colorable exceptional graphs from this classification admit a natural partial ordering, and we determine the 29 graphs that are maximal in this respect, in a way similar to the classification of maximal ($E_8$-representable) exceptional graphs as described in (Cvetkovi\'{c}, Rowlinson, Simi\'{c}, 2004). Lastly, as a byproduct and also similarly as in (loc. cit.), we determine all 39 graphs that are maximal with respect to being representable in the $E_7$ root system.
\end{abstract}

\section{Introduction}

For a simple graph $G$, write $\ymax(G)$ and $\ymin(G)$ for respectively the largest and smallest eigenvalue of the adjacency matrix of $G$. The \emph{Hoffman bound} is a well-known spectral lower bound for the chromatic number of a non-empty graph $G$: $\chi(G)\ge h(G)$, where
\begin{equation}\label{eq:Hoffman}
    h(G)\coloneqq 1-\frac{\y_{\max}(G)}{\y_{\min}(G)}.
\end{equation}
A graph is called \emph{Hoffman colorable} if the Hoffman bound is tight, and every optimal coloring is then called a \emph{Hoffman coloring}. From the spectrum of regular complete multipartite graphs $K_{m,m,\dots,m}$, it can be easily read off that they are Hoffman colorable. Also non-empty bipartite graphs (graphs with chromatic number 2) are Hoffman colorable, since their spectra are symmetric around 0. In both cases, the multipartition is a Hoffman coloring. Bipartite and regular complete multipartite graphs are not the only examples; a Hoffman colorable graph that is neither bipartite nor regular complete multipartite is called \emph{non-trivially Hoffman colorable}, see for example the graph from Figure \ref{fig:lollipoptensor}.

\begin{figure}[ht]
    \begin{center}
        \begin{tikzpicture}[scale=0.4]
\coordinate (1) at (0,2);
\coordinate (2) at (-1.73,-1);
\coordinate (3) at (1.73,-1);
\coordinate (4) at (-0.87,0.5);
\coordinate (5) at (0.87,0.5);
\coordinate (6) at (0,-1);
\draw[gray,thick] (1) -- (4);
\draw[gray,thick] (1) -- (5);
\draw[gray,thick] (2) -- (4);
\draw[gray,thick] (2) -- (6);
\draw[gray,thick] (3) -- (5);
\draw[gray,thick] (3) -- (6);
\draw[gray,thick] (4) -- (5);
\draw[gray,thick] (4) -- (6);
\draw[gray,thick] (5) -- (6);
\filldraw[ForestGreen] (1) circle (4pt);
\filldraw[BrickRed] (2) circle (4pt);
\filldraw[Blue] (3) circle (4pt);
\filldraw[Blue] (4) circle (4pt);
\filldraw[BrickRed] (5) circle (4pt);
\filldraw[ForestGreen] (6) circle (4pt);
\end{tikzpicture}
    \end{center}
    $$\ymax=1+\sqrt 5,\quad\ymin=\frac{-1-\sqrt5}2$$
    \caption{The smallest connected non-trivially Hoffman colorable graph.}
    \label{fig:lollipoptensor}
\end{figure}

Hoffman colorings have been studied in \cite{(s)rg,spreads} for (strongly) regular graphs, and in \cite{Abiad,previouspaper} for general graphs. Tightness of a related spectral lower bound for the chromatic number in terms of the eigenvalues of the normalized Laplacian matrix has also been studied \cite{Beers,NL}. In \cite{3chromDRG,ProefschriftHaemers}, the Hoffman bound is used to classify strongly regular and distance-regular graphs with a given small chromatic number. Studying Hoffman colorability of graphs is interesting because the Hoffman bound is not only a lower bound for the chromatic number, but also for numerous variations of the chromatic number. This includes the vector chromatic number $\chi_v$ \cite{KargerEtAl}, which is related to the Lovász $\vartheta$-number \cite{Lovasz} (see \cite[Theorem 8.2]{KargerEtAl}), and the quantum chromatic number $\chi_q$; we have 
\[h(G) \le \chi_v(G) \le \chi_q(G) \le \chi(G)\]
by \cite[Corollary 4.1]{QuantumHom} and \cite[Theorem 6]{Lovasz}. For a Hoffman colorable graph, we get the values for these variations of the chromatic number for free by sandwiching. In particular, for Hoffman colorable graphs, the quantum chromatic number is computable, while for graphs in general it is not known to be computable \cite{QuantumHom}.

Of particular interest to this paper are the results of \cite{previouspaper}, which we briefly discuss. The \emph{Decomposition Theorem} \cite[Theorem 3.3]{previouspaper}, among other things, entails that the induced subgraph of a connected Hoffman colorable graph on a choice of at least two color classes is again Hoffman colorable. We will refer to such induced subgraphs as \emph{chromatic components}, so that Hoffman colorability is closed with respect to chromatic components. Using the Decomposition Theorem, one can characterize Hoffman colorability in line graphs \cite[Theorem 3.12]{previouspaper}; the \emph{line graph} $L(G)$ of a non-empty graph $G$ is defined as the graph having the edges of $G$ as vertices, such that two $G$-edges are adjacent in $L(G)$ whenever they share a vertex.

Line graphs have the spectral property that the smallest eigenvalue is bounded below by $-2$. The well-known Cameron-Goethals-Seidel-Shult Theorem spectrally generalizes line graphs by classifying all connected graphs having all eigenvalues at least $-2$. In short, there is an infinite family, the \emph{generalized line graphs}, and finitely many \emph{exceptional graphs}, which can be represented in the $E_8$ root system. Induced subgraphs of generalized line graphs are also generalized line graphs. Therefore, if a connected graph $G$ with smallest eigenvalue at least $-2$ contains an exceptional graph $H$ as an induced subgraph, then $G$ is exceptional as well. This defines a partial order on the set of exceptional graphs, and there are 473 exceptional graphs that are maximal with respect to this partial order \cite[Section 6.1]{-2}.

A very natural follow-up question of the characterization of Hoffman colorable line graphs from \cite{previouspaper} is therefore whether it is possible to extend it to generalized line graphs, or even to exceptional graphs. As it turns out, this is possible, and this is exactly the main subject of this article. In particular, we show that Hoffman colorability of generalized line graphs is equivalent to an explicit condition in terms of the graph structure which we call \emph{chromatically balanced}. Moreover, we determine that there are exactly 245 Hoffman colorable exceptional graphs and among these, we determine those that are maximal with respect to chromatic components; there are exactly 29 of them. The complete statement of the classification can be found in \thref{thm:main}. As an important ingredient of our proof of \thref{thm:main}, we find that, among the graphs that can be represented in the $E_7$ root system, there are exactly 39 that are maximal with respect to induced subgraphs. Using \thref{thm:main}, we completely classify all connected non-trivially Hoffman colorable graphs $G$ with fewer than $3\chi(G)$ vertices (\thref{thm:3chi}).

This article is organized as follows. In Section \ref{sec:results} we present our main contributions. In Section \ref{sec:preliminaries} we set out the preliminaries that are necessary for our proofs. In Sections \ref{sec:generalizedlinegraphs} and \ref{sec:exceptional} we prove \thref{thm:main}, in the cases of generalized line graphs and exceptional graphs respectively. In Section \ref{sec:application} we prove \thref{thm:3chi}. Lastly, in Section \ref{sec:representations} we give (a reference to) an $E_8$-representation for each of the 245 Hoffman colorable exceptional graphs from \thref{thm:main}.

\section{New classifications of Hoffman colorable graphs}\label{sec:results}
In this section we state our main theorem (\thref{thm:main}), as well as an application to graphs with fewer than $3\chi(G)$ vertices (\thref{thm:3chi}). Before stating those results, we need to define some additional notions.

Let $G$ and $H$ be two graphs, and let $m\ge 2$ be an integer. Then we call $H$ an \emph{$m$-chromatic component of $G$} if there exists an optimal coloring of $G$ (say $V(G)=\bigsqcup_{i=1}^{\chi(G)} V_i$ where each $V_i$ is a coclique) such that $H$ is isomorphic to the induced subgraph of $G$ on the set of vertices $\bigsqcup_{i=1}^m V_i$. Instead of \emph{2-chromatic} we say \emph{dichromatic}.

For $m$ a non-negative integer, the \emph{cocktail party graph} $CP(m)$ is the complete multipartite graph with $m$ classes of size 2. If $G$ is a graph on vertices $v_1,\dots, v_n$, and $a_1,\dots,a_n$ are non-negative integers, then the \emph{generalized line graph} $L(G;a_1,a_2,\dots,a_n)$ is the graph obtained by considering the disjoint union of $L(G)$ and $CP_1, \dots, CP_n$ (with $CP_i\cong CP(a_i)$) where for every $i$ we draw new edges between every vertex of $CP_i$ and every vertex of the form $\{v_i,u\}$ of $L(G)$. In other words, we connect every vertex in the $i$'th cocktail party graph to every edge that is incident to the $i$'th vertex of $G$.

Note that $\chi(L(G;a_1,\dots,a_n))\ge \chi(L(G))$. If we consider the subgraph consisting of $CP_i$ and of all the edges in $G$ incident to $v_i$, we see that $\chi(L(G;a_1,\dots,a_n))\ge a_i+\deg_G(v_i)$ for every $i$. However, if we write $c$ for the maximum of $\chi(L(G))$ and all $a_i+\deg_G(v_i)$, then we can take any $c$-coloring of $L(G)$, and then greedily color all the cocktail party vertices to obtain a valid $c$-coloring of $L(G;a_1,\dots,a_n)$. Summarizing, we have
\begin{equation}\label{eq:chi_gl}
    \chi(L(G;a_1,\dots,a_n)) = \max(\chi(L(G)),\max_{1\le i \le n}(a_i+\deg_G(v_i))).
\end{equation}

Now, for a graph $G$ and an integer $c\ge \chi(L(G))$, we define the \emph{$c$-chromatically balanced generalized line graph of $G$}, denoted $L(G,c)$, as the generalized line graph $L(G;a_1,\dots,a_n)$ where $a_i=c-\deg(v_i)$ for every $i=1,\dots,n$. This way, its chromatic number is equal to $c$, as the inputs of the inner maximum function of (\ref{eq:chi_gl}) are all equal to $c$, which is also at least $\chi(L(G))$.

We are now ready to state our main theorem, which is the following.

\begin{thm}\thlabel{thm:main}
    The connected, non-trivially Hoffman colorable graphs with $\y_{\min}(G)\ge  -2$ are precisely the following:
    \begin{itemize}
        \item chromatically balanced generalized line graphs,
        \item the graph from Figure \ref{fig:lollipoptensor} (which is the line graph of the graph from Figure \ref{fig:K3withleaves}),
        \item 245 Hoffman colorable exceptional graphs; each of them being a chromatic component of one of 29 maximal Hoffman colorable exceptional graphs $M_1,\dots,M_{29}$.
    \end{itemize}
\end{thm}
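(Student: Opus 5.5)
By the Cameron--Goethals--Seidel--Shult Theorem, a connected graph $G$ with $\ymin(G)\ge -2$ is either a generalized line graph or an exceptional graph. I will treat these two cases separately, using the Decomposition Theorem \cite[Theorem 3.3]{previouspaper} as the main structural tool in both.

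\textbf{Generalized line graphs.} For $G' = L(G;a_1,\dots,a_n)$, the first step is to relate the eigenvalue $-2$ to the structure. If $G'$ is non-trivially Hoffman colorable and $\chi(G')\ge 3$, then in almost all cases I expect $\ymin(G')=-2$; the exception is when $G'$ is an ordinary line graph whose root graph is nearly a tree, and there the line graph classification \cite[Theorem 3.12]{previouspaper} should produce exactly the graph of Figure \ref{fig:lollipoptensor}. With $\ymin=-2$, Hoffman colorability means $\ymax(G') = 2(\chi(G')-1)$. I will combine this with the equality case of Hoffman's bound: the all-ones vector restricted to color classes gives an eigenvector structure in which every vertex has exactly $\ymax/2=\chi-1$ neighbours in each other color class. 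Hence $G'$ is regular of degree $2(\chi-1)$. A vertex $\{v_i,v_j\}$ of $L(G)$ has degree $\deg v_i+\deg v_j-2+2a_i+2a_j$, and a cocktail party vertex at $v_i$ has degree $\deg v_i+2a_i-2$. Setting both equal to $2(c-1)$ forces $a_i=c-\deg v_i$, so $G'$ is chromatically balanced.

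For the converse, I will show that $L(G,c)$ is $2(c-1)$-regular. Its least eigenvalue is exactly $-2$, because cocktail party vertex pairs give eigenvectors of the form $e_x-e_y$; the remaining case $c=\deg$ everywhere is handled by the usual argument that the incidence matrix has a nontrivial kernel. This gives $h=c=\chi$. I then need to exclude the trivially colorable members (bipartite or regular complete multipartite graphs).

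\textbf{Exceptional graphs.} This is a finite problem, since exceptional graphs are represented in $E_8$ and have at most 36 vertices. By the Decomposition Theorem, every dichromatic component of a Hoffman colorable graph is Hoffman colorable, hence bipartite with equal $\ymax$-related parameters. I will bound $\chi$: with $\ymin\ge-2$ and the regularity derived above, $\ymax=2(\chi-1)$. In an exceptional graph I expect $\ymin=-2$, since graphs with $\ymin>-2$ are few and can be checked directly. The plan is to reduce to $E_7$. The Gram matrix $A+2I$ has rank at most 8, and the Hoffman equality case yields an eigenvector of $\ymax$ orthogonal to a structured subspace. This should force the representation into a root subsystem of smaller rank, which I expect to be $E_7$. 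To use this, I will classify the 39 maximal $E_7$-representable graphs by a computer search over star complements, as in \cite{-2}. I then search their induced subgraphs for Hoffman colorable ones with chromatic number at least 3, discard those that are generalized line graphs, and organize the survivors under the chromatic-component order. The result should be 245 graphs with 29 maxima.

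The main obstacle is the reduction from $E_8$ to $E_7$ together with keeping the computer search tractable. The justification I would try first is the following. In a Hoffman coloring with $\ymin=-2$, the sum of the vectors in each color class is a common vector, and this vector lies in the orthogonal complement of the representation span modulo $E_8$. That should identify a fixed root direction whose orthogonal complement is $E_7$.
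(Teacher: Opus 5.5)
\textbf{Generalized line graphs.} Your key step is false. You claim that Hoffman colorability with $\ymin=-2$ forces $L(G;a_1,\dots,a_n)$ to be $2(\chi-1)$-regular, and conversely that $L(G,c)$ is $2(c-1)$-regular. Constant neighbour counts between colour classes hold only for regular graphs (\thref{prop:constantequitable}), and chromatically balanced generalized line graphs are usually irregular. For example, $L(K_{1,3},3)=L(K_{1,3};0,2,2,2)$ has vertex degrees $6$ and $3$, yet $\chi=3$, $\ymax=4$ and $\ymin=-2$. Your arithmetic is also off: equating the cocktail-party degree $\deg v_i+2a_i-2$ with $2c-2$ gives $a_i=c-\deg(v_i)/2$, not $c-\deg v_i$.

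The missing idea is a weighted test vector. Put $x=2$ on edge-vertices and $x=1$ on cocktail-party vertices. Then \eqref{eq:chi_gl} gives $Ax\le 2(\chi-1)x$ entrywise, so by Perron--Frobenius $\ymax\le 2(\chi-1)$, with equality iff $a_i=\chi-\deg v_i$ for all $i$. Combined with $\chi\ge h\ge 1+\ymax/2$ (valid since $\ymin\ge -2$), this proves both directions at once, including $\ymin=-2$. Note also that your vectors $e_x-e_y$ for a cocktail-party pair are $0$-eigenvectors, not $(-2)$-eigenvectors.

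For $\ymin>-2$ you only state an expectation. You need the multiplicity formula for $-2$ \cite[Theorem 2.2.8]{-2} to reduce to a line graph or to a tree with a single cocktail-party pair. You then need an actual argument that the latter is bipartite. In the paper, the connected component containing the pair inside a dichromatic component is $D_{m+1}$. Its index $2\cos(\pi/(2m))$ has an even minimal polynomial, so \thref{lem:Galois conjugate} applies.

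\textbf{Exceptional graphs.} The reduction to $E_7$ is wrong, so your search space is far too small. An $E_7$-representable graph has $\operatorname{rank}(A+2I)\le 7$. It also has maximum degree at most $16$, because the $32$ roots of $E_7$ at $60^\circ$ to a root $r$ form $16$ pairs $\{s,r-s\}$ at $120^\circ$. By contrast, the cone over $L(K_8)$ (one of $M_{26},\dots,M_{29}$) contains $L(K_8)$, whose matrix $A+2I=N^{T}N$ has rank $8$. Moreover, $M_{11},\dots,M_{19},M_{22},M_{23},M_{25},\dots,M_{29}$ all have a vertex of degree at least $18$. Searching the 39 maximal $E_7$-graphs can therefore never reach these 16 maxima.

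Your proposed mechanism also cannot work. The vector $\sum_{v\in C}x_vf(v)$ is a combination of representing vectors, so it lies \emph{in} their span, and in these examples that span is all of $\R^8$.

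The paper does not restrict to $E_7$. It splits via \thref{lem:cases} into three cases:
\begin{itemize}
\item cones and regular graphs, handled via \thref{thm:conegraphs} and the Hoffman coclique graphs of the Schl\"afli graph, the Chang graphs and $L(K_8)$;
\item irregular graphs in $\mathcal S_8$, shown to be exactly the switched cones $(\widehat G)_C$ over non-trivially Hoffman colorable graphs in $\mathcal G_3$;
\item induced subgraphs of the $43$ maximal exceptional graphs of type (b) or (c).
\end{itemize}
$E_7$ enters only because the coclique search in $M(G)$ (\thref{lem:irregularinduced}) requires that $H\sqcup K_1$ not be induced in $G$. When it is induced, $H$ is $E_7$-representable, so the $11$ relevant maximal $E_7$-graphs are added to the search. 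A second such extension would make $H$ $D_6$-representable, hence a generalized line graph (\thref{lem:induced}).
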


\begin{figure}[ht]
        \begin{center}
        \begin{tikzpicture}[scale=0.4]
\coordinate (1) at (0,2.73);
\coordinate (2) at (2.37,-1.37);
\coordinate (3) at (-2.37,-1.37);
\coordinate (4) at (-0.87,-0.5);
\coordinate (5) at (0.87,-0.5);
\coordinate (6) at (0,1);
\draw[ForestGreen,thick] (1) -- (6);
\draw[Blue,thick] (2) -- (5);
\draw[BrickRed,thick] (3) -- (4);
\draw[ForestGreen,thick] (4) -- (5);
\draw[Blue,thick] (4) -- (6);
\draw[BrickRed,thick] (5) -- (6);
\filldraw[gray] (1) circle (4pt);
\filldraw[gray] (2) circle (4pt);
\filldraw[gray] (3) circle (4pt);
\filldraw[gray] (4) circle (4pt);
\filldraw[gray] (5) circle (4pt);
\filldraw[gray] (6) circle (4pt);
\end{tikzpicture}             
\end{center}
\caption{A sporadic Hoffman edge coloring.}
\label{fig:K3withleaves}
\end{figure}
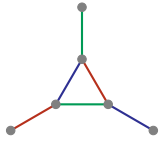

Descriptions of the 245 Hoffman colorable exceptional graphs (including the 29 maximal ones) can be found in Sections~\ref{sec:exceptional} and \ref{sec:representations}.

Note that even though the graph from Figure \ref{fig:lollipoptensor} is a Hoffman colorable line graph, it is not a chromatically balanced generalized line graph. Moreover, it is the unique connected non-trivially Hoffman colorable graph with smallest eigenvalue strictly greater than $-2$; this will be shown in \thref{thm:GL>-2} for the case of generalized line graphs and in \thref{thm:excep>-2} for exceptional graphs. For the case of graphs with smallest eigenvalue equal to $-2$, \thref{thm:main} will be proved in \thref{thm:gl} for generalized line graphs and in \thref{thm:regexcepcone,thm:typea,thm:typebc} for exceptional graphs.

\begin{table}[ht]
    \begin{center}
        \begin{tabular}{|c||c|c|c|c|}
            \hline
            graph identifier & order & chromatic number & color class sizes  & reference\\
            \hline
            \hline
            $M_1, M_2$ &11&3&$3^2,5$& Section \ref{sec:typebc}\\
            \hline
            $M_3$ &13&3&$3,5^2$& Section \ref{sec:typebc}\\
            \hline
            $M_4$ &15&4&$3^3,6$& Section \ref{sec:typebc}\\
            \hline
            $M_5$ & 16 & 5 & $3^4, 4$ & Section \ref{sec:typea}\\
            \hline
            $M_6,\dots,M_9$ &18&5&$3^4,6$& Section \ref{sec:typebc}\\
            \hline
            $M_{10}$ &20&7&$2^5,5^2$& Section \ref{sec:typebc}\\
            \hline
            $M_{11},\dots,M_{19}$ &21&6&$1,4^5$& Section \ref{sec:regexcepcone}\\
            \hline
            $M_{20}$ &21&7&$3^7$& Section \ref{sec:regexcepcone}\\
            \hline
            $M_{21}$ &22&8&$2^6,5^2$ or $2^7,8$& Section \ref{sec:typebc}\\
            \hline
            $M_{22},M_{23}$ &22&7&$3^6,4$& Section \ref{sec:typea}\\
            \hline
            $M_{24}$ &27&9&$3^9$& Section \ref{sec:regexcepcone}\\
            \hline
            $M_{25}$ &28&9&$3^8,4$& Section \ref{sec:typea}\\
            \hline
            $M_{26},\dots,M_{29}$ &29&8&$1,4^7$&  Section \ref{sec:regexcepcone}\\
            \hline
        \end{tabular}
    \end{center}
    \caption{Color class sizes of the optimal colorings of the 29 maximal Hoffman colorable exceptional graphs, and references to the respective sections where the graphs are discussed.}
    \label{tab:mhe1}
\end{table}

\begin{table}[ht]
\begin{center}
\begin{tabular}{|c||c|c|c|}
    \hline graph & order & degree sequence & spectrum\\
    \hline \hline
    $M_1$ & 11 & $6^2,4^2,3^4,2^3$ & $4, \sqrt 2 ^2, 1^2, 0, -\sqrt 2 ^2, -2^3$ \\
    \hline
    $M_2$ & 11 & $6^2,4^2,3^4,2^3$ & $4, 2, 1^2, 0^3, -2^4$ \\
    \hline
    $M_3$ & 13 & $6,4^8,2^4$ & $4,2^2,1^2,0^3,-2^5$ \\
    \hline
    $M_4$ & 15 & $8,7^4,6^6,3^4$ & $6,3^2,1^2,0^3,-2^7$ \\
    \hline
    $M_5$ & 16 & $12, 8^{12}, 4^3$ & $8,2^5,0,-2^9$ \\
    \hline
    $M_6$ & 18 & $9^8,8^6,4^4$ & $8,4,2^4,0^2,-2^{10}$ \\
    \hline
    $M_7,\dots,M_9$ & 18 & $9^8,8^6,4^4$ & $8,(2+\sqrt 2)^2,2^2,(2-\sqrt 2)^2,0,-2^{10}$ \\
    \hline
    $M_{10}$ & 20 & $14^{10},13^2,7^4,6^4$ & $12,3,2^4,1,0,-2^{12}$ \\
    \hline
    $M_{11},\dots,M_{19}$ & 21 & $20,9^{20}$ & $10,(2+\varphi)^2,3^2,(3-\varphi)^2,0,-2^{13}$ \\
    \hline
    $M_{20}$ & 21 & $12^{21}$ & $12,4,3^4,0,-2^{14}$ \\
    \hline
    $M_{21}$ & 22 & $16^{14},7^8$ & $14,2^7,-2^{14}$ \\
    \hline
    $M_{22}, M_{23}$ & 22 & $18,12^{18},6^3$ & $12,4,3^4,0^2,-2^{14}$ \\
    \hline
    $M_{24}$ & 27 & $16^{27}$ & $16,4^{6},-2^{20}$ \\
    \hline
    $M_{25}$ & 28 & $24,16^{24},8^3$ & $16,4^6,0,-2^{20}$ \\
    \hline
    $M_{26},\dots,M_{29}$ & 29 & $28,13^{28}$ & $14,4^7,-2^{21}$ \\
    \hline
\end{tabular}
\end{center}
\caption{Degree sequences and spectra of the 29 maximal Hoffman colorable exceptional graphs ($\varphi$ denotes the golden ratio $(1+\sqrt 5)/2\approx 1.618\dots$).}
\label{tab:mhe2}
\end{table}

In Tables \ref{tab:mhe1} and \ref{tab:mhe2}, we provide some basic information on the 29 Hoffman colorable exceptional graphs that are maximal with respect to chromatic components. We would like to note that, even though multiple Hoffman colorings of these graphs might exist, the only case where two Hoffman colorings exist that have different sizes of the color classes is $M_{21}$; it has several Hoffman colorings with six color classes of size 2 and two of size 5, but also a unique Hoffman coloring with seven color classes of size 2 and one of size 8.

The 245 graphs from \thref{thm:main} naturally divide into eight classes according to which color class sizes are used. More specifically, for a graph $G$, write $\mathfrak C(G)$ for the set of sizes of Hoffman color classes (i.e. cocliques that are part of a Hoffman coloring) in $G$. Then, as we will see in the proofs in Section \ref{sec:exceptional}, there are eight possibilities for $\mathfrak C(G)$ if $G$ is a non-trivially Hoffman colorable exceptional graph. In Table \ref{tab:muchi}, we provide the number of (maximal) Hoffman colorable graphs for each of these possibilities.
\begin{table}[ht]
    \begin{center}
        \begin{tabular}{|c||c|c||c|}
        \hline
        $\mathfrak C$ & number of & number of maximal & reference\\
        & Hoffman colorable & Hoffman colorable &\\
        & exceptional graphs & exceptional graphs &\\
        \hline
        \hline
        $\{3\}$ & 17 & 2 & Section \ref{sec:regexcepcone}\\
        \hline
        $\{4\}$ & 70 & 0 & Section \ref{sec:regexcepcone}\\
        \hline
        $\{1,4\}$ & 87 & 13 & Section \ref{sec:regexcepcone}\\
        \hline
        $\{3,4\}$ & 35 & 4 & Section \ref{sec:typea}\\
        \hline
        $\{2,5\}$ & 17 & 1 & Section \ref{sec:typebc}\\
        \hline
        $\{2,5,8\}$ & 6 & 1 & Section \ref{sec:typebc}\\
        \hline
        $\{3,5\}$ & 3 & 3 & Section \ref{sec:typebc}\\
        \hline
        $\{3,6\}$ & 10 & 5 & Section \ref{sec:typebc}\\
        \hline
        \hline
        total & 245 & 29 &\\
        \hline
        \end{tabular}
    \end{center}
    \caption{The number of (maximal) Hoffman colorable exceptional graphs for each of the eight possibilities for the occurring sizes of the Hoffman color classes, with references to the respective sections where the graphs are discussed.}
    \label{tab:muchi}
\end{table}

As an application of \thref{thm:main}, we prove the following classification of Hoffman colorability with a ``small'' number of vertices compared to the chromatic number.

\begin{thm}\thlabel{thm:3chi}
There are, up to isomorphism, exactly ten connected non-trivially Hoffman colorable graphs with $|V(G)|<3\chi(G)$. These are the graph from Figure \ref{fig:lollipoptensor}, $M_{10}$, $M_{21}$, and the seven chromatic components of $M_{21}$ of orders $11$, $13$, $15$, $17$, and $20$.
\end{thm}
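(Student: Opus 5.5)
The proof rests on a single spectral inequality, after which \thref{thm:main} reduces everything to a finite check. Let $G$ be connected and non-trivially Hoffman colorable, with $n=|V(G)|$ and $\chi=\chi(G)=1-\ymax/\ymin$. First I show that $|V(G)|<3\chi(G)$ forces $\ymin(G)>-3$. Since $G$ is not bipartite, $\chi\ge3$. I then use the trace identities $\sum_i\y_i=0$ and $\sum_i\y_i^2=2|E|\le n\ymax$, together with $\ymax=(\chi-1)|\ymin|$. The eigenvalues other than $\ymax$ sum to $-\ymax$ and each is at least $\ymin$. Combining this with Cauchy--Schwarz and $\sum\y_i^2\le n\ymax$ should yield a bound of the form $n\ge \chi\,|\ymin|$ (a weighted interlacing or "ratio"-type bound). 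Concretely, in a Hoffman coloring every color class attains the Hoffman ratio bound $\alpha\le n|\ymin|/(\ymax+|\ymin|)=n/\chi$, so all classes have equal "weight". One also needs the fact from \cite{previouspaper} that every vertex outside a class has exactly $|\ymin|$ neighbours in that class in the weighted (Perron-vector) sense. A dichromatic component then contains a vertex with at least $|\ymin|$ neighbours in another class, and summing over class sizes should give $n\ge \chi\cdot\lceil|\ymin|\rceil$ or similar.

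I expect establishing this inequality to be the main obstacle. My first attempt: apply \cite[Decomposition Theorem]{previouspaper} to pairs of color classes $V_i,V_j$. Each such dichromatic component is bipartite and Hoffman colorable with eigenvector structure forcing $|V_i|+|V_j|\ge 2|\ymin|$ roughly, since each vertex of $V_i$ has weighted degree $|\ymin|$ into $V_j$. Averaging over pairs then gives $n\ge\chi|\ymin|$. Hence $n<3\chi$ implies $|\ymin|<3$, and since eigenvalues of such $G$ are algebraic integers with $\ymax$ integer multiples controlled, I must rule out $-3<\ymin<-2$. If the averaging argument only gives $\ymin>-3$, I would need an additional argument showing that a Hoffman colorable graph cannot have $\ymin\in(-3,-2)$ with this few vertices. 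I would try to use that $\ymin$ is then a totally real algebraic integer all of whose conjugates are eigenvalues, and so also satisfy the Hoffman relation. This step is the one I am least sure about.

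Once $\ymin\ge-2$ is established, \thref{thm:main} applies, and $G$ is one of three types: the graph of Figure~\ref{fig:lollipoptensor} (with $6<9$ vertices, so it qualifies), a chromatically balanced generalized line graph, or one of the 245 exceptional graphs. For the chromatically balanced generalized line graphs $L(H,c)$ I count vertices: $n=|E(H)|+2\sum_v(c-\deg v)=2c|V(H)|-3|E(H)|$. Since $L(H)$ is $c$-colorable, each color class of $L(H)$ is a matching, so $|E(H)|\le c|V(H)|/2$. Hence $n\ge 2c|V(H)|-\tfrac32 c|V(H)|=\tfrac12c|V(H)|$. This alone is not enough, so I would refine it using non-triviality, which rules out line graphs of bipartite graphs and complete multipartite cases. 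Instead I would argue via the color classes directly: in $L(H,c)$ with $\ymin=-2$, each Hoffman color class has size exactly $n/c\cdot$(weight) $\ge3$, which gives $n\ge3c$. For the exceptional graphs the claim is a finite check on the 245 graphs via their color class sizes. Using Table~\ref{tab:muchi}, only the classes $\{2,5\}$ and $\{2,5,8\}$ (and the $\{1,4\}$ family) can have average class size below $3$. The $\{1,4\}$ graphs have at most one class of size $1$, so $n\ge 1+4(\chi-1)\ge3\chi$. For the $\{2,5\}$ and $\{2,5,8\}$ families, which are chromatic components of $M_{10}$ and $M_{21}$, I compute $n$ versus $3\chi$ for each component. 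For instance, $M_{10}$ has $20<21$ and $M_{21}$ has $22<24$. Enumerating the components of $M_{21}$ then leaves exactly those of orders $11,13,15,17,20$, seven in total.
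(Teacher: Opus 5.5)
Your skeleton (reduce to $\ymin\ge -2$, then go through the three families of \thref{thm:main}) matches the paper's. The gap is that the reduction, which is the heart of the proof, is not carried out.

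Your pairwise averaging can be made rigorous. By \thref{thm:Decomp}, the dichromatic component $H$ on $V_i\sqcup V_j$ has $\ymax(H)=-\ymin(G)$, and $\ymax(H)^2\le |E(H)|\le |V_i||V_j|$. Summing $|V_i|+|V_j|\ge 2|\ymin(G)|$ over all pairs gives $n\ge \chi|\ymin(G)|$. But this only yields $\ymin>-3$, and no counting of this kind closes the interval $(-3,-2)$. The sharper inequality $|V_i||V_j|\ge\ymin^2>4$ does exclude a class of size $1$, since all other classes would have size at least $5$ and then $n\ge 5\chi-4\ge 3\chi$. However, the configuration $n=3\chi-1$, with one class of size $2$ and all others of size $3$, satisfies every counting constraint.

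The missing idea is \thref{lem:2-3}. By \thref{thm:Decomp,prop:univ}, the dichromatic component on a class of size $2$ and a class of size $3$ has a positive eigenvector and no isolated vertices. So it is $P_5$, $D_5$, $C_4$ with a pendant vertex, or $K_{2,3}$. Their spectral radii have the even minimal polynomials $x^2-3$, $x^4-4x^2+2$, $x^4-5x^2+2$ and $x^2-6$. Then \thref{lem:Galois conjugate} makes $-\ymax(G)$ an eigenvalue, so $G$ is bipartite. Your plan to conjugate $\ymin$ directly has no traction without this explicit list.

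With this lemma, the paper argues on the two smallest classes $m_1\le m_2$:
\begin{itemize}
\item $n<3\chi$ forces $m_2\le 3$ and $m_1\le 2$.
\item If $m_1=1$, then $G$ is a cone by \thref{prop:univ}. \thref{thm:conegraphs}, with $\ymin$ integral and $\ymin^2\le 3$, then forces $G$ to be complete.
\item Hence $m_1=m_2=2$, and the component on these two classes lies inside $K_{2,2}$, giving $\ymin\ge -2$ together with two classes of size $2$.
\end{itemize}
Separately, your remark that every class of a Hoffman coloring has size $n/\chi$ is false for irregular graphs; $M_{21}$ is a counterexample.

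The generalized line graph step also rests on a false claim. Hoffman color classes of chromatically balanced generalized line graphs can have size $2$ even in non-trivial cases. For example, $L(P_4,3)$ has 15 vertices, is irregular, contains triangles, and is Hoffman colorable by \thref{thm:gl}. It has a Hoffman coloring with class sizes $2,5,8$, where the class of size $2$ consists of the two end edges of $P_4$.

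The inequality $n\ge 3c$ for non-trivial cases is true, but it must be proved. There are two ways to do this:
\begin{itemize}
\item Use the paper's formula $|C|=2|V(H)|-3|M|$, after giving each cocktail-party pair a single color. Combined with the two classes of size $2$, this forces $|V(H)|\in\{1,4\}$, and hence a cocktail party graph.
\item Finish your own estimate. The bound $|E(H)|\le c\lfloor |V(H)|/2\rfloor$ gives $n\ge 3c$ for $|V(H)|\ge 6$ and for $|V(H)|\in\{3,5\}$. The remaining cases $|V(H)|\in\{1,2,4\}$ only produce $K_1$, $K_{1,4}$ and cocktail party graphs ($CP(c)$, $C_4$, $CP(3)=L(K_4)$).
\end{itemize}

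The exceptional case is fine as a finite check once $\ymin\ge -2$ is known, but you should make it explicit. Each graph in the $\{2,5\}$ and $\{2,5,8\}$ families has a Hoffman coloring with $A\in\{1,2\}$ classes of size $5$ and the rest of size $2$. So $n=2\chi+3A<3\chi$ holds exactly when $\chi>3A$, and this criterion is what selects the ten graphs.
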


The graph $M_{21}$ has up to isomorphism two chromatic components for each of the orders $11$ and $13$, and one for each of the orders $15$, $17$, and $20$. The six chromatic components of orders $11$, $13$, $15$, and $17$ are also chromatic components of $M_{10}$, but the chromatic component of order $20$ is not. More details can be found in Sections~\ref{sec:typebc} and \ref{sec:application}.

\section{Preliminaries}\label{sec:preliminaries}
Before we can prove the main results, we provide additional background information, and fix some of the notation we use.

\subsubsection*{(Induced) subgraphs}
A graph $H$ is a \emph{subgraph} of a graph $G$ if $V(H)\subseteq V(G)$ and $E(H)\subseteq E(G)$. The subgraph is \emph{proper} if $H\ne G$. Let $X$ be a set of vertices in a graph $G$. The \emph{induced subgraph of $G$ on $X$} (denoted $G[X]$) is the graph with vertex set $X$, together with all the edges between vertices of $X$ that are in $G$.
\begin{prop}[{\cite[Proposition 3.2.1]{spectra}}]\thlabel{prop:subgraph}
    If $H$ is a subgraph of $G$, then $\y_{\max}(H)\le \y_{\max}(G)$. If $G$ is connected and $H$ is a proper subgraph, then $\y_{\max}(H)<\y_{\max}(G)$. If $H$ is an induced subgraph of $G$, then $\y_{\min}(G)\le \y_{\min}(H)$.
\end{prop}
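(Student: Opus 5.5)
This is the standard Perron–Frobenius / Rayleigh quotient argument, which I would carry out in three steps, one for each claim of \thref{prop:subgraph}. Throughout, let $A$ be the adjacency matrix of $G$ on $n$ vertices. Let $B$ be the adjacency matrix of $H$, extended by zero rows and columns to an $n\times n$ matrix indexed by $V(G)$. Since $E(H)\subseteq E(G)$, we have entrywise $0\le B\le A$.

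For the first claim, I would use the Rayleigh characterization $\ymax(M)=\max_{\|x\|=1}x^\top Mx$ for symmetric $M$. Zero-padding does not change the nonzero spectrum and only adds eigenvalues $0\le\ymax(H)$, so $\ymax(B)=\ymax(H)$. By Perron–Frobenius, $B$ has a nonnegative unit eigenvector $x$ for $\ymax(B)$. Then
\[\ymax(H)=x^\top Bx\le x^\top Ax\le \ymax(G),\]
where the first inequality holds because $x\ge0$ and $A-B\ge 0$ entrywise.

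For the strict inequality, assume $G$ is connected and $H\ne G$. Suppose equality held throughout the chain above. Then $x$ attains the Rayleigh maximum of $A$, so $x$ is an eigenvector of $A$ for $\ymax(G)$. Since $G$ is connected, $A$ is irreducible, and Perron–Frobenius gives $x>0$ entrywise. Equality in $x^\top Bx\le x^\top Ax$ means $x^\top(A-B)x=0$. With $x>0$ and $A-B\ge0$, this forces $A=B$. Thus $H$ contains every vertex of $G$ (any vertex of $G$ has an incident edge unless $G=K_1$, where there is no proper subgraph with vertices to consider) and every edge of $G$, so $H=G$, a contradiction.

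For the induced case, $B$ is a principal submatrix of $A$. I would apply the Rayleigh characterization $\ymin(M)=\min_{\|y\|=1}y^\top My$. Take a unit eigenvector $y$ of the adjacency matrix of $H$ for $\ymin(H)$ and extend it by zeros to $V(G)$. The extended vector satisfies $y^\top Ay=\ymin(H)$, hence $\ymin(G)\le\ymin(H)$. Alternatively, this follows directly from Cauchy interlacing.

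The only delicate step is the strict inequality. There one must handle that $H$ may omit vertices, which the zero-padding takes care of, and invoke positivity of the Perron vector of the connected graph $G$.
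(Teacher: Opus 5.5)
Your proof is correct. It is essentially the standard argument behind the cited result \cite[Proposition 3.2.1]{spectra}, which the paper quotes without proof: Perron--Frobenius monotonicity for $0\le B\le A$, with strictness from positivity of the Perron vector of the connected graph $G$, and Cauchy interlacing for the principal submatrix, which your zero-padded Rayleigh-quotient argument for $\ymin$ reproves directly.
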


In particular, if $G$ has smallest eigenvalue at least $-2$, and $H$ is an induced subgraph of $G$, then $H$ also has smallest eigenvalue at least $-2$.

\subsubsection*{Hoffman colorings}
A set $C$ of vertices of a graph is a \emph{coclique} if no pair of vertices of $C$ is adjacent. A \emph{coloring} of a graph $G$ is a partition of the vertex set of $G$ into cocliques. The \emph{chromatic number} $\chi(G)$ of $G$ is the smallest number of cocliques needed for a coloring. A graph is \emph{bipartite} if $\chi(G)\le 2$. A coclique is \emph{$\nu$-regular} if every vertex outside $C$ is adjacent to precisely $\nu$ vertices of $C$.

As mentioned before, the Hoffman bound (\ref{eq:Hoffman}) gives a spectral lower bound on the chromatic number. For regular graphs, the Hoffman bound follows from the \emph{ratio bound}:

\begin{thm}[ratio bound, Hoffman (unpublished, see {\cite{ratiobound}})]\thlabel{thm:ratiobound}
    Let $G$ be a non-empty $k$-regular graph on $n$ vertices with smallest eigenvalue $\y_{\min}$ and let $C$ be a coclique in $G$. Then
    $$|C|\le\frac n{h(G)}= \frac{-n \y_{\min}}{k-\y_{\min}},$$
    with equality if and only if $C$ is $(-\y_{\min})$-regular.
\end{thm}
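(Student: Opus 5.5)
The statement is the classical ratio bound, so I plan the standard eigenvector and quotient argument. Let $A$ be the adjacency matrix of $G$, write $\mathbf 1$ for the all-ones vector, and write $\lambda=\ymin$. Since $G$ is non-empty and $k$-regular, $k>0$ and $\lambda<0$, because the trace of $A$ is zero. Let $c=|C|$ and let $\chi_C$ be the characteristic vector of $C$.

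The key step is to apply the spectral inequality to the matrix $M=A-\lambda I$, which is positive semidefinite. I take the test vector $x=\chi_C-\frac{c}{n}\mathbf 1$, which is orthogonal to the eigenvector $\mathbf 1$. Then:
\begin{itemize}
\item the quadratic form satisfies $x^\top A x\ge \lambda\, x^\top x$;
\item because $C$ is a coclique, $\chi_C^\top A\chi_C=0$;
\item regularity gives $\chi_C^\top A\mathbf 1=kc$ and $\mathbf 1^\top A\mathbf 1=kn$.
\end{itemize}
Hence
\[x^\top A x = 0-2\tfrac{c}{n}kc+\tfrac{c^2}{n^2}kn=-\tfrac{kc^2}{n},\qquad x^\top x=c-\tfrac{c^2}{n}.\]
The inequality becomes $-\frac{kc^2}{n}\ge \lambda\left(c-\frac{c^2}{n}\right)$. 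Dividing by $c>0$ (the case $C=\emptyset$ is trivial) and rearranging gives $c(k-\lambda)\le -\lambda n$, which is the claimed bound. Note that $n/h(G)=-n\lambda/(k-\lambda)$, since $h(G)=1-k/\lambda=(\lambda-k)/\lambda$ with $\ymax=k$.

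For the equality case I expect the only real work. Equality holds if and only if $x$ lies in the $\lambda$-eigenspace, that is, $Ax=\lambda x$. Since $A\mathbf 1=k\mathbf 1$, this reads $A\chi_C=\lambda\chi_C+\frac{c}{n}(k-\lambda)\mathbf 1$.
\begin{itemize}
\item For $v\in C$, the left side is $0$, giving $0=\lambda+\frac{c}{n}(k-\lambda)$. This is exactly the equality $c=-\lambda n/(k-\lambda)$.
\item For $v\notin C$, the left side counts the neighbours of $v$ in $C$, which must then equal $\frac{c}{n}(k-\lambda)=-\lambda$. So $C$ is $(-\lambda)$-regular.
\end{itemize}
Conversely, suppose every vertex outside $C$ has exactly $-\lambda$ neighbours in $C$. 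Double counting the edges between $C$ and its complement gives $kc=-\lambda(n-c)$, hence $c=-\lambda n/(k-\lambda)$, which is equality.

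The main point of care is justifying that equality in the Rayleigh bound forces $x$ to be an eigenvector. This follows from diagonalizing $M$: $x^\top Mx=0$ with $M\succeq 0$ implies $Mx=0$.
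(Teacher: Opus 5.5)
The paper does not prove this statement. It quotes it as Hoffman's unpublished theorem with a pointer to Haemers' note, so there is no in-paper argument to compare against.

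Your proof is correct and complete. The key points all check out:
\begin{itemize}
\item The computations $x^\top Ax=-kc^2/n$ and $x^\top x=c-c^2/n$ are right.
\item The rearrangement only divides by the positive quantities $c$, $n$ and $k-\lambda$, so it is reversible. This is exactly what makes the equality analysis legitimate.
\item Positive semidefiniteness of $A-\lambda I$ correctly upgrades $x^\top(A-\lambda I)x=0$ to $Ax=\lambda x$.
\item The converse, by double counting the edges between $C$ and $V(G)\setminus C$, is fine.
\end{itemize}
Two cosmetic remarks. First, the orthogonality $x\perp\mathbf 1$ is never used, since $x^\top Ax\ge\lambda x^\top x$ holds for every vector. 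Second, for $C=\emptyset$ both sides of the equivalence are false: the bound is positive, and no vertex can have $-\lambda>0$ neighbours in $\emptyset$. So setting that case aside costs nothing.

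For comparison, the common alternative is the interlacing proof. Take the quotient matrix $\bigl(\begin{smallmatrix}0&k\\ kc/(n-c)&k-kc/(n-c)\end{smallmatrix}\bigr)$ of the partition $\{C,V(G)\setminus C\}$; its eigenvalues are $k$ and $-kc/(n-c)$. Interlacing gives $\lambda_{\min}\le -kc/(n-c)$, which rearranges to the bound. In the equality case the interlacing is tight, which forces the partition to be equitable, i.e.\ $C$ is $(-\lambda_{\min})$-regular.

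That route is shorter once the interlacing machinery (including ``tight interlacing implies equitable'') is available, and it adapts directly to other partitions such as colorings. Your Rayleigh-quotient argument is fully self-contained, needing only the spectral theorem. It also gives slightly more information in the tight case: $\chi_C-\frac{c}{n}\mathbf 1$ is then an eigenvector for $\lambda_{\min}$.
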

A coclique that attains the ratio bound is called a \emph{Hoffman coclique}. A Hoffman coloring of a regular graph now necessarily partitions the vertex set into Hoffman cocliques:
\begin{prop}[{\cite[Proposition 2.3]{3chromDRG}}]\thlabel{prop:constantequitable}
    Let $G$ be a regular graph. Then a coloring of $G$ is a Hoffman coloring if and only if it is a partition of $G$ into Hoffman cocliques. In this case, every vertex is adjacent to precisely $-\y_{\min}(G)$ vertices of every color class other than its own.
\end{prop}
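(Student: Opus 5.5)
The statement to prove is the last one: for a regular graph $G$, a coloring is a Hoffman coloring if and only if every color class is a Hoffman coclique, and in that case every vertex has exactly $-\ymin(G)$ neighbours in each color class other than its own. The plan is a counting argument on top of the ratio bound (\thref{thm:ratiobound}).

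Let $G$ be $k$-regular on $n$ vertices, write $\theta=-\ymin(G)>0$ ($G$ is non-empty), and let $V(G)=C_1\sqcup\dots\sqcup C_m$ be a coloring. By \thref{thm:ratiobound} each class satisfies $|C_i|\le n/h(G)$. Summing gives
\[
n=\sum_{i=1}^m |C_i|\le m\,\frac{n}{h(G)},
\]
so $m\ge h(G)$, which recovers the Hoffman bound in the regular case.

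For the forward direction, suppose the coloring is a Hoffman coloring, so $m=\chi(G)=h(G)$. Then the sum above equals $n$ exactly, so every inequality $|C_i|\le n/h(G)$ must be an equality, and each $C_i$ is a Hoffman coclique. For the converse, if every $C_i$ is a Hoffman coclique then $n=m\,n/h(G)$, so $m=h(G)$. Since $h(G)\le\chi(G)\le m$, we get $m=\chi(G)=h(G)$, so the coloring is optimal and attains the Hoffman bound.

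For the final claim, the equality case of \thref{thm:ratiobound} says each Hoffman coclique $C_i$ is $\theta$-regular: every vertex outside $C_i$ has exactly $\theta$ neighbours in $C_i$. Applying this to every class $C_j$ with $j\neq i$ gives the statement for a vertex in $C_i$.

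There is no real obstacle once the equality characterization of the ratio bound is assumed. The only care needed is to note that the Hoffman bound follows from summing the ratio bound, so "Hoffman coloring" and "$m=h(G)$" coincide for regular graphs, and to use the precondition that $G$ is non-empty, so that $\ymin<0$ and $h(G)$ is defined.
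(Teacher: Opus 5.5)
Your proof is correct. Summing the ratio bound over the color classes gives $m \ge h(G)$, and tightness forces every class to attain the bound. Conversely, a partition into Hoffman cocliques has exactly $h(G)\le\chi(G)$ classes, and the equality case of the ratio bound then gives the $(-\lambda_{\min}(G))$-regularity. The paper gives no proof of its own and cites Blokhuis--Brouwer--Haemers; your argument is the standard one, and nothing is missing.
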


For general graphs, we also know structure of Hoffman colorings through the Decomposition Theorem, which can be seen as an extension of \thref{prop:constantequitable}.
\begin{thm}[Decomposition Theorem, {\cite[Theorem 3.3]{previouspaper}}]\thlabel{thm:Decomp}
    Let $G$ be a Hoffman colorable graph with positive eigenvector $x$ and let $H$ be a chromatic component of $G$. Then $H$ is Hoffman colorable, $\y_{\min}(G)=\y_{\min}(H)$, and $x|_H$ is a positive eigenvector for $H$.
\end{thm}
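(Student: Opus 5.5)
The plan is to rerun the standard quotient-matrix proof of the Hoffman bound on a Hoffman coloring, read off what equality forces, and then restrict to the chosen color classes. Let $A$ be the adjacency matrix of $G$, $c=\chi(G)$ and $V(G)=\bigsqcup_{i=1}^c V_i$ a Hoffman coloring. We write $\ymax=\ymax(G)$, $\ymin=\ymin(G)$, and let $x$ be the positive eigenvector for $\ymax$. Put $x_i$ for $x$ restricted to $V_i$ (zero elsewhere), $u=(\|x_i\|)_i$, and define the $c\times c$ matrix $B_{ij}=x_i^\top A x_j/(\|x_i\|\|x_j\|)$.

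\textbf{Step 1: equality in the Hoffman bound.} Since $B=S^\top AS$ for $S$ with orthonormal columns $x_i/\|x_i\|$, interlacing gives $\ymin\le \y_{\min}(B)$. Because each $V_i$ is a coclique, $B$ has zero diagonal. Because $\sum_i x_i=x$, the vector $u$ is an eigenvector of $B$ with eigenvalue $\ymax$. The trace then gives
$0=\ymax+\sum_{\text{other}}\theta\ge \ymax+(c-1)\ymin$, which is the Hoffman bound.

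Equality $c=h(G)$ forces all $c-1$ remaining eigenvalues of $B$ to equal $\ymin$. So $B=\ymax P+\ymin(I-P)$ with $P=uu^\top/\|u\|^2$. Consequently every $w\in W:=\operatorname{span}(x_1,\dots,x_c)$ with $w\perp x$ has Rayleigh quotient $\ymin$ for $A$, hence $Aw=\ymin w$. In particular $W$ is $A$-invariant, with $A$ acting as $\ymax$ on $x$ and as $\ymin$ on $W\cap x^\perp$.

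\textbf{Step 2: $x|_H$ is an eigenvector.} Let $H=G[V_1\cup\dots\cup V_m]$ with $m\ge2$, and set $x_H=\sum_{i\le m}x_i$. Write $x_H=\alpha x+w$ with $w\in W\cap x^\perp$. Then $Ax_H=\alpha\ymax x+\ymin w$. Restricting coordinates to $V(H)$, where $x_H$ agrees with $x$, gives
$A_H x|_H=\mu\, x|_H$ with $\mu=\alpha\ymax+(1-\alpha)\ymin$.
This vector is positive, so by Perron--Frobenius $\mu=\y_{\max}(H)$; this uses only the positivity argument, with the components of $H$ handled separately if necessary.

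\textbf{Step 3: $\ymin(H)=\ymin$ and tightness.} The quotient matrix of $H$ for its coloring by $V_1,\dots,V_m$ is the principal submatrix $B_H$ of $B$ on the first $m$ indices, so $B_H=\ymax P|_{m}+\ymin(I-P|_m)$. It has eigenvalue $\ymin$ with multiplicity $m-1$, on the vectors orthogonal to $u_H=(\|x_i\|)_{i\le m}$, and $u_H$ as its remaining eigenvector. By Step 2 the eigenvalue on $u_H$ is $\mu$, and since $B_H$ has zero trace, $\mu=-(m-1)\ymin>0$.

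The vectors in $\operatorname{span}(x_1,\dots,x_m)$ corresponding to the eigenvalue $\ymin$ of $B_H$ have Rayleigh quotient $\ymin$ for $A_H$. Together with interlacing for induced subgraphs (\thref{prop:subgraph}), this gives $\ymin(H)=\ymin$. Hence $h(H)=1-\mu/\ymin=m\ge\chi(H)$, and the Hoffman bound then gives $\chi(H)=m$, so $H$ is Hoffman colorable with Hoffman coloring $V_1,\dots,V_m$.

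The main obstacle is Step 1: correctly extracting from tightness the fact that $W\cap x^\perp$ lies in the $\ymin$-eigenspace of $A$. Everything afterwards is bookkeeping with principal submatrices of $B$. A minor point to check is that $\mu>0$, so that $H$ is non-empty; this follows from $m\ge2$.
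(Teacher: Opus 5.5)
Your proof is correct and complete. The paper itself gives no proof of this statement; it quotes it from \cite[Theorem 3.3]{previouspaper}. So what you have written is a self-contained derivation, not a reconstruction of an argument in this text.

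The key points all check out:
\begin{itemize}
\item Since $x=Su$, the condition $w\perp x$ for $w=Sv$ is equivalent to $v\perp u$. So the Rayleigh-quotient argument in Step 1 legitimately places $W\cap x^\perp$ in the $\ymin$-eigenspace.
\item The decomposition $x_H=\alpha x+w$ then gives the eigen-equation on $V(H)$.
\item The condition $m\ge 2$ supplies both $\mu>0$ and a nonzero $\ymin$-witness in $H$.
\end{itemize}

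Two remarks would streamline the argument and tie it to how the result is used elsewhere in the paper.

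First, Step 1 gives more than you extract. The zero diagonal of $B=\ymin I+(\ymax-\ymin)uu^\top/\|u\|^2$ forces $\|x_i\|^2=\|x\|^2/c$ for every $i$. Hence $B=-\ymin(J-I)$ and $Ax_j=-\ymin(x-x_j)$ for every $j$. Vertex-wise, every $u\notin V_j$ satisfies $\sum_{v\in V_j\cap N(u)}x(v)=-\ymin\,x(u)$. This is the weighted analogue of \thref{prop:constantequitable}, and it immediately yields \thref{prop:univ}. Summing over $j\le m$ gives $A_Hx|_H=-(m-1)\ymin\,x|_H$ directly; your $\alpha$ is just $m/c$.

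Second, the explicit $\ymin$-witness in Step 3 is not needed. From $\ymax(H)=-(m-1)\ymin(G)$ and $\ymin(G)\le\ymin(H)<0$, the chain $m\ge\chi(H)\ge h(H)=1+(m-1)\ymin(G)/\ymin(H)\ge m$ forces $\chi(H)=h(H)=m$ and $\ymin(H)=\ymin(G)$ at the same time.
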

A connected graph has an (up to scaling) unique positive eigenvector by the Perron-Frobenius Theorem \cite[Theorem 2.2.1]{spectra}, and this belongs to the largest eigenvalue. A disconnected graph $G$ has a positive eigenvector if and only if the largest eigenvalue of all the connected components of $G$ are equal. In this case the dimension of the eigenspace of the largest eigenvalue of $G$ is equal to the number of connected components of $G$.

So, if $H$ is a dichromatic component of a Hoffman colorable graph $G$ with a positive eigenvector, then since $H$ is bipartite, its spectrum is symmetric around 0, and so $\ymax(H)=-\ymin(H)=-\ymin(G)$. Furthermore, since $H$ has a positive eigenvector by the Decomposition Theorem, every connected component $K$ of $H$ has the same largest eigenvalue as $H$. Hence
\begin{equation}\label{eq:eigenvalues of connected component of dichromatic component}
\y_{\max}(G)=(\chi-1)\y_{\max}(K)\quad \text{and} \quad \ymin(G)=\ymin(K)
\end{equation}
for every connected component $K$ of every dichromatic component of $G$.

As said before, using the Decomposition Theorem, Hoffman colorability of line graphs can be characterized \cite{previouspaper}.

\begin{thm}[{\cite[Theorem 3.12]{previouspaper}}]\thlabel{thm:linegraphs}
    Let $G$ be a connected graph. If $L(G)$ is non-trivially Hoffman colorable, then either $G$ is 1-factorable, or $G$ is the graph from Figure \ref{fig:K3withleaves} (in which case $L(G)$ is the graph from Figure \ref{fig:lollipoptensor}).
\end{thm}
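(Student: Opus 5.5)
The plan is to apply the Decomposition Theorem (\thref{thm:Decomp}) to $L\coloneqq L(G)$ together with (\ref{eq:eigenvalues of connected component of dichromatic component}). A Hoffman coloring of $L$ with $\chi\coloneqq\chi(L)\ge 3$ colors is the same as a proper edge coloring of $G$ with $\chi$ colors. For two colors $i\ne j$, the dichromatic component of $L$ on $V_i\cup V_j$ is the line graph of the subgraph of $G$ formed by the edges of colors $i$ and $j$. That subgraph has maximum degree at most $2$, so each connected component $K$ of the dichromatic component is either a path or an even cycle. By (\ref{eq:eigenvalues of connected component of dichromatic component}), every such $K$, over all pairs $\{i,j\}$, satisfies $\ymax(K)=-\ymin(L)$. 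Since $\ymin(L)\ge -2$ for any line graph, the proof splits into the cases $\ymin(L)=-2$ and $\ymin(L)>-2$.

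\emph{Case $\ymin(L)=-2$.} Every component $K$ has $\ymax(K)=2$. Paths have largest eigenvalue strictly less than $2$, so every $K$ is a cycle, necessarily an even cycle alternating between colors $i$ and $j$. Now let $v$ be a vertex of $G$ incident with an edge $e$ of color $i$, and let $j\ne i$ be arbitrary. The vertex $e$ of $L$ lies on an alternating $i$--$j$ cycle, so $v$ is incident with an edge of color $j$. Since $G$ is connected and every vertex has an incident edge, every vertex of $G$ meets every color exactly once. Hence each color class is a perfect matching of $G$, and $G$ is $1$-factorable.

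\emph{Case $\ymin(L)>-2$.} It is classical that this forces $G$ to be a tree or an odd unicyclic graph. Now every $K$ is a path, and all these paths have the same largest eigenvalue, so they all have the same number $k$ of vertices. Also $-\ymin(L)=2\cos(\pi/(k+1))$ and $\ymax(L)=(\chi-1)\cdot 2\cos(\pi/(k+1))$. Let $x$ be the Perron vector of $L$. By \thref{thm:Decomp}, the restriction of $x$ to each such path is a multiple of the sine-shaped Perron vector of $P_k$. In particular, the endpoints of every bicolored path carry the smallest entries on their path, and the eigenvalue equation $\ymax(L)x_e=\sum_{f\sim e}x_f$ gives strong local constraints. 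I would combine this with a counting argument. Fix a leaf edge $e$ of $G$ (it exists for trees; for odd unicyclic graphs other than a bare odd cycle, which is excluded separately since it is regular and not Hoffman colorable). Then $e$ is an endpoint of every bicolored path through it, while edges near a vertex of maximum degree are interior points of many paths. Comparing the equations at a leaf edge and at an adjacent edge should force $k\le 3$ and $\chi=3$. In that range, a short finite enumeration of trees and odd unicyclic graphs with $\Delta\le 3$ whose bicolored components all have exactly $k$ edges leaves only the graph of Figure~\ref{fig:K3withleaves}, where $k=3$ and every bicolored component is a $P_3$ in $L$. One then checks directly that its line graph is Hoffman colorable.

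I expect this second case to be the main obstacle. The first case is a clean structural argument, but turning the Perron-vector constraints into the bound $k\le 3$, $\chi=3$ requires care. I would first try to show that $G$ has no vertex of degree at least $4$, using $\ymax(L)\ge\Delta(G)-1$ against $\ymax(L)=(\chi-1)\cdot 2\cos(\pi/(k+1))$, and then exhaust the few remaining shapes by hand.
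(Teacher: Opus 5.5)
Your first case is correct: when $\lambda_{\min}(L)=-2$, every bicolored component is an alternating even cycle, so every vertex of $G$ meets every color and the color classes are perfect matchings. The genuine gap is the second case, which carries all the difficulty of the theorem. There you give no argument, only the expectation that a leaf-edge comparison ``should force $k\le 3$ and $\chi=3$'', and that target is false.

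In the graph of Figure~\ref{fig:K3withleaves}, each bicolored subgraph is a path with four edges (pendant edge, two triangle edges, pendant edge). So the components of the dichromatic components of $L(G)$ are copies of $P_4$, $k=4$, and $-\lambda_{\min}(L)=2\cos(\pi/5)=(1+\sqrt5)/2$, as in Figure~\ref{fig:lollipoptensor}. The value $k=3$ cannot occur in the non-trivial case at all. There $\nu=\sqrt2$ has the even minimal polynomial $t^2-2$, so \thref{lem:Galois conjugate} makes $L$ bipartite, and the same holds for every odd $k$. A proof of $k\le3$ would therefore exclude exactly the graph you are supposed to find.

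Your proposed first step is also empty. Since $\chi(L)\ge\Delta(G)$, the inequality $(\chi-1)\cdot 2\cos(\pi/(k+1))\ge\Delta(G)-1$ holds automatically once $k\ge2$, so it cannot bound $\Delta(G)$. Nor does the eigenvalue equation of $L$ at an edge give new information: once $x$ restricts to Perron vectors on all bicolored paths, $\sum_{f\sim e}x_f=(\chi-1)\nu x_e$ holds automatically.

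The information lies in the fact that each edge occupies a position in $\chi-1$ different bicolored paths, and your leaf-edge idea does work along those lines. Let $e=uw$ be a leaf edge of color $i$, and write $s_p=\sin(p\theta)$ with $\theta=\pi/(k+1)$.
\begin{itemize}
\item The edge $e$ needs a neighbor of every other color, all at $u$, so $\deg u=\chi$.
\item Since $e$ is an end of each $ij$-path through it, the $j$-edge $e_j$ at $u$ has $x_{e_j}=\nu x_e$.
\item Two such edges $e_j,e_l$ are then consecutive with equal entries in their $jl$-path. For $P_k$ this forces them to be the middle pair, so $k$ is even.
\item If $k=2$, then $\lambda_{\min}(L)=-1$ and $L$ is complete, hence trivially Hoffman colorable.
\item If $k\ge4$, look at the other end of $e_j$. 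Its $i$-edge sits at position $3$ of the $ij$-path, and its $l$-edge at position $k/2-1$ of the $jl$-path. A short trigonometric computation gives their Perron entries the ratio $\nu/(\nu+1)$.
\item These two edges are consecutive in an $il$-path, so this ratio must equal some $s_{p+1}/s_p$. That happens only for $k=4$.
\end{itemize}
After this you would still have to rule out $\chi\ge4$ and identify $G$ when $\chi=3$. None of this appears in your proposal.
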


A graph is \emph{1-factorable} if its edges can be partitioned into perfect matchings. A \emph{perfect matching} or \emph{1-factor} is a set $M$ of edges such that every vertex of the graph is incident with exactly one edge of $M$. A graph $G$ is 1-factorable if and only if $G$ is regular, and the chromatic number of $L(G)$ is equal to the valency of $G$.

\begin{rmk}\thlabel{rmk:1-factorable}
Note that for a 1-factorable graph $G$ with valency $k$ the $k$-chromatically balanced generalized line graph of $G$ is just $L(G)$. Therefore, in the context of generalized line graphs, the class of line graphs of 1-factorable graphs from \thref{thm:linegraphs} fits into the larger class of chromatically balanced generalized line graphs from \thref{thm:main}.
\end{rmk}

The Decomposition Theorem also allows to obtain a characterization for Hoffman colorability of cone graphs \cite{previouspaper}. The \emph{cone graph} $\widehat G$ over a graph $G$ is the graph obtained by adding a vertex (the so-called \emph{universal vertex}) to $G$ that is adjacent to all vertices of $G$.

\begin{thm}[{\cite[Theorem 3.8]{previouspaper}}]\thlabel{thm:conegraphs}
    Let $G$ be a non-empty graph. Then $\widehat G$ is Hoffman colorable if and only if $G$ is regular and Hoffman colorable with color classes of size $\y_{\min}(G)^2$. In this case $\y_{\min}(\widehat G )=\y_{\min}(G)$, and this is an integer.
\end{thm}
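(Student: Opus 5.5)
The plan is to exploit the fact that the universal vertex $u$ of $\widehat G$ is adjacent to every other vertex. Consequently, in every coloring of $\widehat G$ the set $\{u\}$ is a color class on its own, and $\chi(\widehat G)=\chi(G)+1$. Write $c=\chi(G)\ge 2$; this holds since $G$ is non-empty. I treat the two directions separately.

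\emph{Forward direction.} Suppose $\widehat G$ is Hoffman colorable. Since $\widehat G$ is connected, it has a positive Perron eigenvector $x$ with eigenvalue $\y_{\max}(\widehat G)$.
\begin{itemize}
\item Fix a Hoffman coloring $\{u\},V_1,\dots,V_c$. The induced subgraph on $V_1\sqcup\dots\sqcup V_c$ is $G$ itself, so $G$ is a $c$-chromatic component of $\widehat G$.
\item By the Decomposition Theorem (\thref{thm:Decomp}), $G$ is Hoffman colorable, $\y_{\min}(G)=\y_{\min}(\widehat G)=:\theta$, and $x|_G$ is a positive eigenvector of $G$. A positive eigenvector belongs to $\y_{\max}(G)$ on each component.
\item The eigenvalue equation of $\widehat G$ at a vertex $v\ne u$ reads $\y_{\max}(\widehat G)x_v=x_u+\y_{\max}(G)x_v$. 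Hence $x_v=x_u/(\y_{\max}(\widehat G)-\y_{\max}(G))$ is constant on $V(G)$. So the Perron vector of $G$ is constant, and $G$ is regular, say of valency $k$.
\item Next look at the dichromatic components $\{u\}\cup V_i$. Each is the star $K_{1,|V_i|}$, which is connected and has smallest eigenvalue $-\sqrt{|V_i|}$. By (\ref{eq:eigenvalues of connected component of dichromatic component}), $\theta=-\sqrt{|V_i|}$, so every color class of $G$ has size $\theta^2$.
\item For integrality, put $s=-\theta$. Hoffman colorability of the regular graph $G$ gives $c=1-k/\theta$, i.e.\ $k=(c-1)s$. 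Since $k$ is an integer and $c\ge2$, $s$ is rational. Being an eigenvalue of an integer matrix, $s$ is an algebraic integer, hence $s\in\Z$.
\end{itemize}

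\emph{Converse.} Suppose $G$ is $k$-regular and Hoffman colorable with $c$ classes of size $s^2$, where $s=-\y_{\min}(G)$. Then $n=cs^2$ and $k=(c-1)s$. The spectrum of $\widehat G$ is computed in two parts.
\begin{itemize}
\item Every eigenvector of $G$ orthogonal to the all-ones vector, extended by $0$ at $u$, is still an eigenvector of $\widehat G$ with the same eigenvalue. These eigenvalues are all at least $-s$.
\item The remaining two eigenvalues are those of the quotient matrix $\begin{pmatrix}0&n\\1&k\end{pmatrix}$, i.e.\ the roots of $\mu^2-k\mu-n=0$. Substituting $k=(c-1)s$ and $n=cs^2$, these roots are $cs$ and $-s$.
\end{itemize}
Thus $\y_{\max}(\widehat G)=cs$ and $\y_{\min}(\widehat G)=-s=\y_{\min}(G)$. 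Therefore $h(\widehat G)=1+c=\chi(\widehat G)$, so $\widehat G$ is Hoffman colorable.

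The only delicate step is deducing regularity of $G$ from the eigenvector restriction. This requires care when $G$ is disconnected, but the eigenvalue equation at each $v\neq u$ handles every component simultaneously, since $x_u$ is a single common value.
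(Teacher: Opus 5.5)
Your proof is correct. The paper does not reprove this result; it imports it from \cite{previouspaper}, remarking only that it follows from the Decomposition Theorem, and your argument takes exactly that route. In the forward direction, the Decomposition Theorem applies both to the chromatic component $G$ and to the star dichromatic components $K_{1,|V_i|}$. The converse is a direct spectral computation via the quotient matrix of the equitable partition $\{\{u\},V(G)\}$. The one step you leave implicit is that the eigenvalues inherited from the orthogonal complement of the all-ones vector are at most $k=(c-1)s<cs$, so that $cs=\lambda_{\max}(\widehat G)$; this is immediate (alternatively, $(s,1,\dots,1)$ is a positive eigenvector for $cs$).
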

We also need the following result in our proofs.
\begin{prop}[{\cite[Corollaries 3.5 and 3.6]{previouspaper}}]\thlabel{prop:univ}
    Let $G$ be a graph with a positive eigenvector. Then for every Hoffman coloring of $G$, every vertex is adjacent to at least one vertex of every other color class. Consequently, if a Hoffman coloring of $G$ has a color class of size 1, then $G$ is a cone graph.
\end{prop}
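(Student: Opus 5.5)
The plan is to derive this from the Decomposition Theorem (\thref{thm:Decomp}) applied to dichromatic components. Throughout, $G$ is assumed non-empty, since otherwise the Hoffman bound is undefined. Then $\ymin(G)<0$, because the eigenvalues sum to zero and are not all zero.

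Fix a Hoffman coloring $V(G)=\bigsqcup_{i=1}^{\chi} V_i$ and a positive eigenvector $x$ of $G$. Take a vertex $v\in V_i$ and another color class $V_j$ with $j\neq i$. Let $H=G[V_i\sqcup V_j]$, which is a dichromatic component of $G$. By \thref{thm:Decomp}, $H$ is Hoffman colorable, $\ymin(H)=\ymin(G)$, and $x|_H$ is a positive eigenvector of $H$, say with eigenvalue $\theta$.

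Suppose, for contradiction, that $v$ has no neighbour in $V_j$. Since $V_i$ is a coclique, $v$ is then an isolated vertex of $H$. Evaluating the eigenvalue equation $A_H x|_H=\theta x|_H$ at the coordinate $v$ gives $0=\theta x_v$. As $x_v>0$, we get $\theta=0$.

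A positive eigenvector of a graph restricts to a positive eigenvector on each connected component, so it belongs to the largest eigenvalue of every component (Perron--Frobenius). Hence every component of $H$ has largest eigenvalue $0$, so every component is a single vertex and $H$ has no edges. Then $\ymin(H)=0$, which contradicts $\ymin(H)=\ymin(G)<0$. Therefore $v$ has a neighbour in every other color class.

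For the consequence, suppose some color class is $V_i=\{v\}$. By the first part, every vertex in every other class is adjacent to some vertex of $V_i$, i.e.\ to $v$. So $v$ is adjacent to all other vertices of $G$, and $G$ is the cone over $G-v$ with universal vertex $v$.

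The only step needing care is the passage from ``positive eigenvector with eigenvalue $0$'' to ``$H$ is edgeless''. It relies on applying Perron--Frobenius componentwise, as in the remark after \thref{thm:Decomp} that a disconnected graph has a positive eigenvector exactly when all its components share the same largest eigenvalue.
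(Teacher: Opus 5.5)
Your argument is correct. This paper gives no proof of its own here: it imports the statement from \cite[Corollaries 3.5 and 3.6]{previouspaper}, where it follows from the Decomposition Theorem, and your derivation (apply \thref{thm:Decomp} to the dichromatic component $G[V_i\sqcup V_j]$, then note that a singleton class must be joined to every other vertex) is essentially that route. You can drop the contradiction and the componentwise Perron--Frobenius step, as in the discussion after \thref{thm:Decomp}: the eigenvalue of $x|_H$ is $\ymax(H)=-\ymin(H)=-\ymin(G)>0$ by bipartiteness, so $(A_Hx)(v)=\sum_{w\in V_j,\,w\sim v}x_w>0$ gives a neighbour of $v$ in $V_j$ directly.
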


\subsubsection*{Dynkin and Smith graphs}

Let $H$ be a dichromatic component of a Hoffman colorable graph with smallest eigenvalue at least $-2$. Then $H$ also has smallest eigenvalue at least $-2$ by \thref{prop:subgraph}, and by bipartiteness largest eigenvalue at most 2. Connected graphs with largest eigenvalue at most 2 have been classified (see \cite[Theorem 3.1.3]{spectra} or \cite[Section 3.4]{-2}): the connected graphs with largest eigenvalue less than 2 are the \emph{Dynkin graphs} or \emph{reduced Smith graphs} $A_n, D_n, E_6, E_7, E_8$, and the graphs with largest eigenvalue equal to 2 are the \emph{Smith graphs} $C_n, W_n, \mathcal F_7, \mathcal F_8, \mathcal F_9$. The Dynkin and Smith graphs are the basic building blocks of Hoffman colorable graphs with smallest eigenvalue at least $-2$. The Dynkin graph $A_n$ ($n\ge 1$) is the path graph on $n$ vertices. The Dynkin graph $D_n$ ($n\ge 4$) can be obtained by adding two leaves to an end vertex of $A_{n-2}$. The Dynkin graph $E_n$ ($n=6,7,8$) can be obtained by adding a leaf to one of the two leaves of $D_{n-1}$ sharing a common neighbor. The graph $C_n$ ($n\ge 3$) is the cycle graph on $n$ vertices. These are not bipartite if $n$ is odd. The Smith graph $W_5$ is the complete bipartite graph $K_{1,4}$, and the graph $W_n$ ($n\ge 6$) can be obtained by adding two leaves to each of the two end vertices of $A_{n-4}$. For the Smith graphs $\mathcal F_7,\mathcal F_8,\mathcal F_9$, see Figure \ref{fig:Smith}.

\begin{figure}[ht]
    \begin{center}
        \begin{tikzpicture}[scale=0.3]
            \coordinate (7m) at (0,0);
            \coordinate (711) at (0,2);
            \coordinate (712) at (0,4);
            \coordinate (721) at (1.73,-1);
            \coordinate (722) at (3.46,-2);
            \coordinate (731) at (-1.73,-1);
            \coordinate (732) at (-3.46,-2);
            \coordinate (8m) at (13,0);
            \coordinate (811) at (13,2);
            \coordinate (821) at (14.73,-1);
            \coordinate (822) at (16.46,0);
            \coordinate (823) at (18.2,-1);
            \coordinate (831) at (11.27,-1);
            \coordinate (832) at (9.54,0);
            \coordinate (833) at (7.8,-1);
            \coordinate (9m) at (26,0);
            \coordinate (911) at (26,2);
            \coordinate (921) at (27.73,-1);
            \coordinate (922) at (29.46,0);
            \coordinate (923) at (31.2,-1);
            \coordinate (924) at (32.93,0);
            \coordinate (925) at (34.66,-1);
            \coordinate (931) at (24.27,-1);
            \coordinate (932) at (22.54,0);

            \draw[black, thick] (7m) -- (711) -- (712);
            \draw[black, thick] (722) -- (721) -- (7m) -- (731) -- (732);
            \draw[black, thick] (8m) -- (811);
            \draw[black, thick] (823) -- (822) -- (821) -- (8m) -- (831) -- (832) -- (833);
            \draw[black, thick] (9m) -- (911);
            \draw[black, thick] (925) -- (924) -- (923) -- (922) -- (921) -- (9m) -- (931) -- (932);

            \filldraw[black] (7m) circle (6pt) node[anchor = north] {$3$};
            \filldraw[black] (711) circle (6pt) node[anchor = west]{$2$};
            \filldraw[black] (712) circle (6pt) node[anchor = west]{$1$};
            \filldraw[black] (721) circle (6pt) node[anchor = north]{$2$};
            \filldraw[black] (722) circle (6pt) node[anchor = north]{$1$};
            \filldraw[black] (731) circle (6pt) node[anchor = north]{$2$};
            \filldraw[black] (732) circle (6pt) node[anchor = north]{$1$};
            \filldraw[black] (8m) circle (6pt) node[anchor = north] {$4$};
            \filldraw[black] (811) circle (6pt) node[anchor = south]{$2$};
            \filldraw[black] (821) circle (6pt) node[anchor = north]{$3$};
            \filldraw[black] (822) circle (6pt) node[anchor = south]{$2$};
            \filldraw[black] (823) circle (6pt) node[anchor = north]{$1$};
            \filldraw[black] (831) circle (6pt) node[anchor = north]{$3$};
            \filldraw[black] (832) circle (6pt) node[anchor = south]{$2$};
            \filldraw[black] (833) circle (6pt) node[anchor = north]{$1$};
            \filldraw[black] (9m) circle (6pt) node[anchor = north] {$6$};
            \filldraw[black] (911) circle (6pt) node[anchor = south]{$3$};
            \filldraw[black] (921) circle (6pt) node[anchor = north]{$5$};
            \filldraw[black] (922) circle (6pt) node[anchor = south]{$4$};
            \filldraw[black] (923) circle (6pt) node[anchor = north]{$3$};
            \filldraw[black] (924) circle (6pt) node[anchor = south]{$2$};
            \filldraw[black] (925) circle (6pt) node[anchor = north]{$1$};
            \filldraw[black] (931) circle (6pt) node[anchor = north]{$4$};
            \filldraw[black] (932) circle (6pt) node[anchor = south]{$2$};
        \end{tikzpicture}
    \end{center}
    \caption{Smith graphs $\mathcal F_7, \mathcal F_8, \mathcal F_9$, with eigenvectors for the largest eigenvalue 2.}
    \label{fig:Smith}
\end{figure}
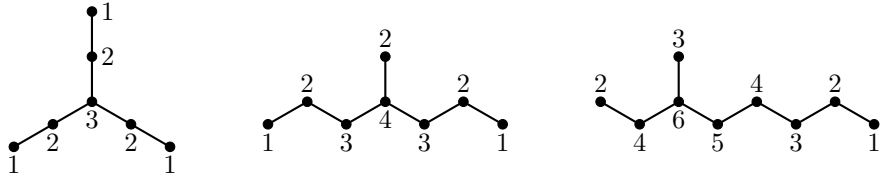

\subsubsection*{Cameron-Goethals-Seidel-Shult Theorem}

The famous Cameron-Goethals-Seidel-Shult Theorem characterizes the connected graphs that have smallest eigenvalue at least $-2$.

\begin{thm}[Cameron-Goethals-Seidel-Shult Theorem ({\cite[Theorem 8.4.1 (i)]{spectra}})]\thlabel{thm:CGSS}
    Let $G$ be a connected graph. Then $G$ satisfies $\y_{\min}(G)\ge -2$ if and only if $G$ is a generalized line graph or an $E_8$-representable graph.
\end{thm}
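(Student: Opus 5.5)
The plan is the classical root-system argument. Let $G$ be connected with $\y_{\min}(G)\ge -2$, so that $A+2I$ is positive semidefinite, and write it as a Gram matrix $A+2I=(\langle x_u,x_v\rangle)_{u,v}$ for vectors $x_v$ in some Euclidean space $\R^d$. Every $x_v$ then has norm $2$, and $\langle x_u,x_v\rangle\in\{0,1\}$ for $u\ne v$. Let $\Lambda$ be the $\Z$-span of the $x_v$. It is an integral lattice, since all inner products are integers, and it is generated by vectors of norm $2$. The set $R=\{r\in\Lambda:\langle r,r\rangle=2\}$ is a (simply laced) root system, because the reflection $s_r(y)=y-\langle y,r\rangle r$ preserves $\Lambda$ and norms. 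The lines spanned by the $x_v$ lie in $R$, and because $G$ is connected, the $x_v$ cannot split into two mutually orthogonal nonempty parts. So the $x_v$ lie in a single irreducible component of $R$ (the span of the $x_v$ generates $\Lambda$, and the components of $R$ are mutually orthogonal). By the classification of irreducible simply laced root systems, this component is of type $A_n$, $D_n$, $E_6$, $E_7$ or $E_8$.

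Next we reduce to two cases. The systems $E_6$ and $E_7$ embed in $E_8$, so those cases give $E_8$-representable graphs. The system $A_n=\{e_i-e_j\}$ embeds in $D_{n+1}=\{\pm e_i\pm e_j\}$. It therefore remains to show that a connected graph represented in $D_n$ is a generalized line graph. Conversely, every generalized line graph $L(H;a_1,\dots,a_n)$ is represented in some $D_m$, and any $E_8$-representable graph has $A+2I$ equal to a Gram matrix, hence is positive semidefinite. Both directions of the ``if'' part are immediate once explicit representations are written down.

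The converse construction goes as follows. Take coordinates $e_v$ for $v\in V(H)$. Represent an edge $\{v,w\}$ of $H$ by $e_v+e_w$. For the $a_i$ cocktail-party pairs at $v_i$, introduce fresh coordinates $f_{i,1},\dots,f_{i,a_i}$ and use the pair of vectors $e_{v_i}\pm f_{i,j}$ for each $j$. One then checks the inner products: a vector $e_{v_i}\pm f_{i,j}$ has inner product $1$ with every edge vector through $v_i$ and with every other vector at $v_i$ except its partner $e_{v_i}\mp f_{i,j}$, with which it has inner product $0$.

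The step I expect to be the main obstacle is the forward direction in type $D_n$: a set $S$ of vectors of the form $\pm e_i\pm e_j$ with pairwise inner products in $\{0,1\}$ must realize a generalized line graph. First I would choose signs so that, as far as possible, the vectors are $e_i+e_j$, using coordinate sign changes (which are automorphisms of $D_n$). Two vectors supported on the same pair $\{i,j\}$ must then be $e_i+e_j$ and $e_i-e_j$, at inner product $0$; this is a cocktail-party pair. I would then argue that, after a suitable choice of signs, every coordinate either occurs only with a $+$ sign in $S$ (these coordinates become the vertices of $H$, and vectors $e_i+e_j$ become edges) or occurs in exactly one pair $e_i\pm e_j$ (a ``petal'' at $i$, contributing to $a_i$). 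The inner-product constraint, namely that a value of $-1$ never occurs, is what forces this. The delicate point is a coordinate appearing with both signs in several vectors: one shows that the vectors containing $-e_j$ must all pair up with a $+e_j$ partner on the same support, so $j$ is a pendant coordinate of a petal. Otherwise a $-1$ inner product would appear, or a sign flip of $e_j$ would remove the conflict. Handling this cleanly, possibly by induction on $|S|$, is where I would spend the effort.
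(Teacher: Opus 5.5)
The paper gives no proof of this statement and simply quotes \cite[Theorem 8.4.1 (i)]{spectra}. Your proposal is the standard root-lattice argument behind that result, and it is correct; the one unstated detail is that $R$ is finite because $\Lambda$ is discrete, which the integral Gram matrix guarantees. The $D_n$ step you flag needs no induction: two vectors of $S$ with opposite signs at a shared coordinate must have the same support (otherwise their inner product is $-1$), and two distinct vectors on one support agree in exactly one coordinate, so a coordinate $j$ carrying both signs lies in exactly the two vectors $\epsilon e_i\pm e_j$, the coordinate $i$ then carries a single sign, and flipping every single-signed coordinate to $+$ gives $L(H;a_1,\dots,a_n)$ at once.
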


We recall the definition of $E_8$-representable graphs. Consider the $E_8$ root system, which can be explicitly constructed as the subset of $\Z^8$ consisting of the 112 vectors having 0 on six coordinates and $\pm 2$ on the other two coordinates and the 128 $\pm 1$-vectors, with an even number of $+1$'s and $-1$'s (these vectors all have norm $2\sqrt 2$). An \emph{$E_8$-representation} of a graph $G$ is an injective mapping $f:V(G)\to E_8$ such that adjacent vertices are mapped to $60^\circ$-angled vectors (inner product 4), and non-adjacent vertices are mapped to $90^\circ$-angled vectors (inner product 0). A graph is \emph{$E_8$-representable} if it admits an $E_8$-representation. A graph is \emph{exceptional} if it is connected, $E_8$-representable, and not a generalized line graph.

The vectors of $E_8$ are all of norm $2\sqrt 2$. The inner product of two different vectors of $E_8$ is an element of $\{-8,-4,0,4,8\}$. To be able to talk more easily about the elements of $E_8$, we follow the notation of \cite[Section 6.2]{-2}, where types and names are attributed to the 240 vectors of $E_8$. We write $e_i$ for the $i$'th basis vector and $\mathbbm 1$ for the all-ones vector.
\begin{description}
    \item[Type $a$:] vectors $a_{ij}=2 e_i + 2e_j$ and $a'_{ij}=-a_{ij}$ for $1
    \le i < j \le 8$;
    \item[Type $b$:] vectors $b_{ij}= \mathbbm 1 - 2e_i - 2e_j$ and $b'_{ij}=-b_{ij}$ for $1\le i <j \le 8$;
    \item[Type $c$:] vector $c_{ij}=2e_i-2e_j$ for distinct $i,j \in \{1,\dots,8\}$;
    \item[Type $d$:] vector $d_{ijk\ell}=\mathbbm 1 - 2e_i - 2e_j - 2e_k -2 e_{\ell}$ for $1\le i < j < k < \ell \le 8$;
    \item[Type $e$:] vectors $e=\mathbbm 1$ and $e'=-e$.
\end{description}

Also relevant to us are the $E_7$ and $D_6$ root systems. The $E_7$ root system can be constructed as the subset of the 126 vectors of $E_8$ orthogonal to a given vector of $E_8$, and the $D_6$ root system as the subset of the 60 vectors of $E_8$ orthogonal to two given orthogonal vectors of $E_8$. By the symmetries of $E_8$ (see \cite[Proposition 3.2.4]{-2}), this is well-defined, as it does not matter which specific given vectors of $E_8$ we choose.

\subsubsection*{Seidel switching}
Let $G$ be a graph and let $X$ be a set of vertices of $G$. Then consider the graph with the same vertices as $G$, such that $u$ and $v$ are adjacent whenever either $u$ and $v$ are adjacent in $G$ and either both belong to $X$ or both do not belong to $X$, or $u$ and $v$ are not adjacent in $G$ and exactly one of $u$ and $v$ belongs to $X$. We write $G_X$ for the resulting graph, and say that $G_X$ is \emph{obtained from $G$ by Seidel switching with respect to $X$}. Two graphs $G$ and $H$ are said to be \emph{switching-equivalent} if there exists a set $X$ of vertices of $G$ such that $G_X \cong H$.

Seidel switching is relevant to $E_8$-representable graphs, because of the following result, which to our knowledge is known but not written down explicitly before. For completeness we include the proof, which is based on the proof of \cite[Proposition 6.2.1]{-2}. Write $\mathcal S_8$ for the class of graphs that are switching-equivalent to the line graph of a graph on eight vertices.

\begin{prop}\thlabel{prop:SeidelNbh}
    Let $G$ be a graph. Then $G\in \mathcal S_8$ if and only if the cone over $G$ is $E_8$-representable. In particular, graphs in $\mathcal S_8$ are $E_8$-representable themselves, and hence have smallest eigenvalue at least $-2$.
\end{prop}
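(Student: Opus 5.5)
The plan is to prove both directions by working with Gram matrices. The key observation is that a graph $G$ on $n$ vertices is $E_8$-representable exactly when $A(G)+2I$ is the Gram matrix (up to a scaling by $1/4$) of $n$ distinct vectors in $E_8$ with pairwise inner products in $\{0,4\}$. The cone $\widehat G$ is $E_8$-representable if and only if, after using the transitivity of $\Aut(E_8)$ on roots, we may send the universal vertex to $e=\mathbbm 1$ and the vertices of $G$ to roots having inner product $4$ with $e$. These are the roots of the form $a_{ij}$ (inner product $4$ with $\mathbbm 1$) and $b'_{ij}$ (inner product $-(8-8)+\dots$, to be checked). A direct computation gives $\langle a_{ij},\mathbbm 1\rangle=4$ and $\langle b_{ij},\mathbbm 1\rangle=8-8=0$. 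This shows that the right set is actually the $28$ vectors $a_{ij}$ together with the $28$ vectors $d$-type or $c$-type roots having inner product $4$ with $\mathbbm 1$. I would first carefully list the $56$ roots $r$ with $\langle r,\mathbbm 1\rangle=4$. I expect these to be the $a_{ij}$ and $-b_{ij}$ shifted appropriately, so that the $56$ roots come in $28$ pairs $\{x_{ij},x'_{ij}\}$ indexed by edges $\{i,j\}$ of $K_8$. The pairs should satisfy $x_{ij}+x'_{ij}=\mathbbm 1$, which is the standard description of the $56$ neighbours of a root as in \cite[Proposition 6.2.1]{-2}.

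Next I compute the inner products within this set of $56$ roots. For $a_{ij},a_{k\ell}$ the inner product is $4|\{i,j\}\cap\{k,\ell\}|$, so the $a$-vectors alone realize $L(K_8)$: two of them are adjacent iff the corresponding edges meet. Replacing $a_{ij}$ by its partner $x'_{ij}=\mathbbm 1-a_{ij}$ gives $\langle x'_{ij},a_{k\ell}\rangle=4-\langle a_{ij},a_{k\ell}\rangle$ for distinct edges, which turns $0\leftrightarrow4$. This is exactly Seidel switching with respect to the set of vertices represented by primed vectors. Also, $\langle x'_{ij},x'_{k\ell}\rangle=\langle a_{ij},a_{k\ell}\rangle$, and $\langle a_{ij},x'_{ij}\rangle=4-8=-4$ is forbidden, so at most one of each pair can be used.

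For the direction ($\Rightarrow$), take a representation of $\widehat G$ normalized as above. Each vertex $v$ of $G$ goes to $a_{e(v)}$ or $x'_{e(v)}$ for a distinct edge $e(v)$ of $K_8$. Let $X$ be the set of vertices sent to primed vectors and let $H$ be the spanning subgraph of $K_8$ with edges $e(v)$. By the computation above, $G_X\cong L(H)$, so $G\in\mathcal S_8$. For ($\Leftarrow$), suppose $G_X\cong L(H)$ with $H$ on at most $8$ vertices, embedded in $K_8$. Map each $v$ to $a_{e(v)}$ if $v\notin X$ and to $x'_{e(v)}$ if $v\in X$, and send the cone vertex to $\mathbbm 1$. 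Since switching is an involution, the inner products match the adjacencies of $\widehat G$, and the map is injective. The consequences then follow: a subset of an $E_8$-representation is again one, and a Gram matrix is positive semidefinite, so $A+2I\succeq0$ and $\ymin\ge-2$.

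The main obstacle is the bookkeeping in the first step. I need to correctly identify the $56$ roots at inner product $4$ with $\mathbbm 1$ and their pairing. The key check is $\langle a_{ij},\mathbbm1-a_{ij}\rangle=4-8$, and that $\mathbbm 1-a_{ij}$ is indeed a root of type $d$/$b$ in the paper's coordinates (norm $8$: $1\cdot6+1\cdot 1+1\cdot1=8$, namely $\mathbbm1-2e_i-2e_j$ has entries $\pm1$, so it equals $b_{ij}$). Hence the partners are exactly the $b_{ij}$, and $\langle b_{ij},\mathbbm 1\rangle=8-4=4$. That confirms the description, and everything else is mechanical.
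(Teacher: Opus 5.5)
Your proposal is correct and is essentially the paper's proof: normalize the cone vertex to $e=\mathbbm 1$, observe that the roots at inner product $4$ with $e$ are exactly the $a_{ij}$ and $b_{ij}=\mathbbm 1-a_{ij}$, and read off the Seidel switching from $\langle b_{ij},a_{k\ell}\rangle=4-\langle a_{ij},a_{k\ell}\rangle$, $\langle b_{ij},b_{k\ell}\rangle=\langle a_{ij},a_{k\ell}\rangle$ and $\langle a_{ij},b_{ij}\rangle=-4$. In a write-up, drop the wrong intermediate claims in your first paragraph ($\langle b_{ij},\mathbbm 1\rangle=0$, and the guess that $c$- or $d$-type roots are involved), which your last paragraph already corrects. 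Also add the type-by-type check showing that no other roots have inner product $4$ with $\mathbbm 1$: types $c$ and $d$ give $0$, types $a'$ and $b'$ give $-4$, and $e'$ gives $-8$.
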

\begin{proof}
    Before proving both directions, we mention that the $E_8$-vectors at a $60^\circ$ angle from $e$ are precisely the vectors $a_{ij}$ and $b_{ij}$. Furthermore, we have
    \begin{equation}\label{eq:innerproductab}
    \langle a_{ij},a_{k\ell}\rangle=\langle b_{ij},b_{k\ell}\rangle=4\cdot |\{i,j\} \cap \{k,\ell\}| \quad\text{and }\quad\langle a_{ij},b_{k\ell} \rangle=4\cdot \big ( 1-|\{i,j\} \cap \{k,\ell\}| \big ).
    \end{equation}

    Suppose that $H$ is a graph on eight vertices, $F$ a set of edges from $H$, and $G\cong L(H)_F$. Then by (\ref{eq:innerproductab}) an $E_8$-representation of the cone over $G$ is given by $e$ for the cone vertex, $a_{ij}$ for edges $\{i,j\} \in E(H) \setminus F$, and $b_{ij}$ for edges $\{i,j\}\in F$.
    
    Next suppose that the cone over $G$ is $E_8$-representable. By symmetry (\cite[Proposition 3.2.4]{-2}), we can assume that the cone vertex is represented by $e$, so that the vertices of $G$ are sent to vectors of the form $a_{ij}$ or $b_{ij}$. Note that by (\ref{eq:innerproductab}) for fixed $i,j$ the vectors $a_{ij}$ and $b_{ij}$ cannot occur simultaneously (vectors in an $E_8$-representation cannot make a $120^\circ$ angle). Now define $H$ as the graph on eight vertices, such that $i$ and $j$ are adjacent whenever $a_{ij}$ or $b_{ij}$ is present in the $E_8$-representation of the cone over $G$, and let $F$ be the set of edges for which $b_{ij}$ is in the representation. Then from (\ref{eq:innerproductab}) it follows that $G\cong L(H)_F$, concluding the proof.
\end{proof}

The following examples of graphs in the class $\mathcal S_8$ are important examples of exceptional graphs.

\begin{exa}\thlabel{exa:Chang}
If we switch in $L(K_8)$ with respect to a set of $K_8$-edges forming $4\cdot K_2$, $C_8$, or $C_5 \sqcup C_3$, we get three non-isomorphic regular graphs that are not isomorphic to $L(K_8)$. These are called the \emph{Chang graphs}.
\end{exa}

\begin{exa}\thlabel{exa:Schlaefli}
If we switch in $L(K_8)$ with respect to the neighbors of one vertex, we get an isolated vertex and the so-called \emph{Schläfli graph}.
\end{exa}
We can represent graphs in $\mathcal S_8$ with an edge switching diagram, see Figure \ref{fig:Chang and Schlaefli}. The graph represented can be obtained by switching the line graph with respect to either the set of red edges or the set of blue dashed edges.
\begin{figure}[ht]
    \begin{center}
        \begin{tikzpicture}[scale=0.6]
            \coordinate (C21) at (0,2);
            \coordinate (C22) at (1.41,1.41);
            \coordinate (C23) at (2,0);
            \coordinate (C24) at (1.41,-1.41);
            \coordinate (C25) at (0,-2);
            \coordinate (C26) at (-1.41,-1.41);
            \coordinate (C27) at (-2,0);
            \coordinate (C28) at (-1.41,1.41);
            \coordinate (C11) at (-6,2);
            \coordinate (C12) at (-4.59,1.41);
            \coordinate (C13) at (-4,0);
            \coordinate (C14) at (-4.59,-1.41);
            \coordinate (C15) at (-6,-2);
            \coordinate (C16) at (-7.41,-1.41);
            \coordinate (C17) at (-8,0);
            \coordinate (C18) at (-7.41,1.41);
            \coordinate (C31) at (6,2);
            \coordinate (C32) at (7.41,1.41);
            \coordinate (C33) at (8,0);
            \coordinate (C34) at (7.41,-1.41);
            \coordinate (C35) at (6,-2);
            \coordinate (C36) at (4.59,-1.41);
            \coordinate (C37) at (4,0);
            \coordinate (C38) at (4.59,1.41);
            \coordinate (S1) at (0,-4);
            \coordinate (S2) at (1.73,-7);
            \coordinate (S3) at (1.73,-5);
            \coordinate (S4) at (0,-8);
            \coordinate (S5) at (-1.73,-5);
            \coordinate (S6) at (-1.73,-7);
            \coordinate (S7) at (5,-6);
            \coordinate (S8) at (-5,-6);

            \draw[BrickRed,thick] (C11) -- (C12);
            \draw[BrickRed,thick] (C13) -- (C14);
            \draw[BrickRed,thick] (C15) -- (C16);
            \draw[BrickRed,thick] (C17) -- (C18);
            \draw[BrickRed,thick] (C21) -- (C22) -- (C23) -- (C24) -- (C25) -- (C26) -- (C27) -- (C28) -- (C21);
            \draw[BrickRed,thick] (C31) -- (C32) -- (C38) -- (C31);
            \draw[BrickRed,thick] (C33) -- (C34) -- (C35) -- (C36) -- (C37) -- (C33);
            \draw[BrickRed,thick] (S7) -- (S1) -- (S8) -- (S2) -- (S7) -- (S3) -- (S8) -- (S4) -- (S7) -- (S5) -- (S8) -- (S6) -- (S7);
            \draw[Blue,dashed,thick] (C21) -- (C23) -- (C25) -- (C27) -- (C21) -- (C25) -- (C22) -- (C24) -- (C26) -- (C28) -- (C22) -- (C26);
            \draw[Blue,dashed,thick] (C22) -- (C27) -- (C23) -- (C28) -- (C25);
            \draw[Blue,dashed,thick] (C23) -- (C26) -- (C21) -- (C24) -- (C28);
            \draw[Blue,dashed,thick] (C24) -- (C27);
            \draw[Blue,dashed,thick] (C11) -- (C13) -- (C18) -- (C14) -- (C17) -- (C13) -- (C15) -- (C17) -- (C11) -- (C14) -- (C15) -- (C18) -- (C11) -- (C15) -- (C12) -- (C14) -- (C16) -- (C18) -- (C12) -- (C13) -- (C16) -- (C17) -- (C12) -- (C16) -- (C11);
            \draw[Blue,dashed,thick] (C31) -- (C33) -- (C35) -- (C37) -- (C34) -- (C36) -- (C33) -- (C32) -- (C34) -- (C31) -- (C35) -- (C32) -- (C36) -- (C31) -- (C37) -- (C32);
            \draw[Blue,dashed,thick] (C33) -- (C38) -- (C34);
            \draw[Blue,dashed,thick] (C35) -- (C38) -- (C36);
            \draw[Blue,dashed,thick] (C37) -- (C38);
            \draw[Blue,dashed,thick] (S1) -- (S2) -- (S3) -- (S4) -- (S5) -- (S6) -- (S1) -- (S3) -- (S5) -- (S1) -- (S4) -- (S6) -- (S2) -- (S4);
            \draw[Blue,dashed,thick] (S2) -- (S5);
            \draw[Blue,dashed,thick] (S3) -- (S6);
            \filldraw[black] (C11) circle (3pt);
            \filldraw[black] (C12) circle (3pt);
            \filldraw[black] (C13) circle (3pt);
            \filldraw[black] (C14) circle (3pt);
            \filldraw[black] (C15) circle (3pt);
            \filldraw[black] (C16) circle (3pt);
            \filldraw[black] (C17) circle (3pt);
            \filldraw[black] (C18) circle (3pt);
            \filldraw[black] (C21) circle (3pt);
            \filldraw[black] (C22) circle (3pt);
            \filldraw[black] (C23) circle (3pt);
            \filldraw[black] (C24) circle (3pt);
            \filldraw[black] (C25) circle (3pt);
            \filldraw[black] (C26) circle (3pt);
            \filldraw[black] (C27) circle (3pt);
            \filldraw[black] (C28) circle (3pt);
            \filldraw[black] (C31) circle (3pt);
            \filldraw[black] (C32) circle (3pt);
            \filldraw[black] (C33) circle (3pt);
            \filldraw[black] (C34) circle (3pt);
            \filldraw[black] (C35) circle (3pt);
            \filldraw[black] (C36) circle (3pt);
            \filldraw[black] (C37) circle (3pt);
            \filldraw[black] (C38) circle (3pt);
            \filldraw[black] (S1) circle (3pt);
            \filldraw[black] (S2) circle (3pt);
            \filldraw[black] (S3) circle (3pt);
            \filldraw[black] (S4) circle (3pt);
            \filldraw[black] (S5) circle (3pt);
            \filldraw[black] (S6) circle (3pt);
            \filldraw[black] (S7) circle (3pt);
            \filldraw[black] (S8) circle (3pt);
        \end{tikzpicture}
    \end{center}
    \caption{Edge switching diagrams for the three Chang graphs and the Schläfli graph.}
    \label{fig:Chang and Schlaefli}
\end{figure}
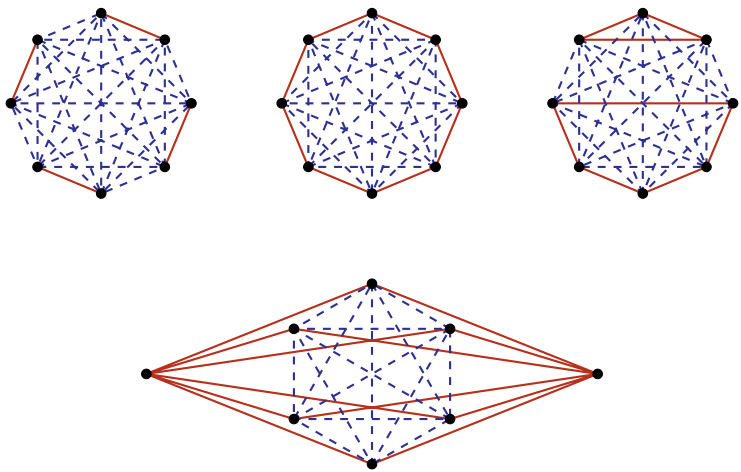

\subsubsection*{Regular connected graphs with smallest eigenvalue at least $-2$}

We have the following characterization of regularity of generalized line graphs.

\begin{prop}[{\cite[Propositions 1.1.5 and 1.1.9]{-2}}]\thlabel{prop:reggl}
    If $G$ is a regular connected generalized line graph, then $G$ is a cocktail party graph, the line graph of a regular graph, or the line graph of a biregular graph.
\end{prop}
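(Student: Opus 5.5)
The plan is to write $G=L(H;a_1,\dots,a_n)$, compute the degree of every vertex of $G$ explicitly in terms of the degrees $d_i\coloneqq\deg_H(v_i)$ and the parameters $a_i$, and show that regularity forces either $H$ to have no edges or all $a_i$ to vanish. No spectral input is needed; the argument is purely combinatorial.

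\emph{Step 1: reductions.} First I would reduce to the case where $H$ is connected or edgeless. A vertex $v_i$ with $d_i=0$ and $a_i=0$ contributes nothing to $G$, so it can be deleted from $H$. If $d_i=0$ but $a_i>0$, then $CP_i$ is a union of connected components of $G$. By connectivity of $G$ this means $G=CP_i\cong CP(a_i)$, which is a cocktail party graph. So I may assume every $v_i$ has $d_i\ge 1$. Since the edges of $H$ form a connected line graph $L(H)$ inside $G$ to which all cocktail party vertices attach, $H$ is then connected and has at least one edge.

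\emph{Step 2: degree computation.} Next I would read off the two kinds of degrees from the definition.
\begin{itemize}
\item A vertex $x\in CP_i$ is adjacent to the $2a_i-2$ other vertices of $CP_i$ and to the $d_i$ edges of $H$ at $v_i$. Hence $\deg_G(x)=2a_i-2+d_i$.
\item An edge-vertex $\{v_i,v_j\}$ is adjacent to $d_i+d_j-2$ other edges and to all of $CP_i$ and $CP_j$. Hence $\deg_G(\{v_i,v_j\})=d_i+d_j-2+2a_i+2a_j$.
\end{itemize}

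\emph{Step 3: ruling out cocktail party vertices.} Suppose some $a_i>0$, and pick any edge $\{v_i,v_j\}$ of $H$ at $v_i$, which exists because $d_i\ge 1$. Then
\[\deg_G(\{v_i,v_j\})-\deg_G(x)=d_j+2a_j\ge 1\]
for $x\in CP_i$. This contradicts regularity, so all $a_i=0$ and $G=L(H)$.

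\emph{Step 4: the line graph case.} With all $a_i=0$, regularity of $L(H)$ says that $d_i+d_j$ equals a constant $s$ for every edge $v_iv_j$. Walking along any path in the connected graph $H$, the degrees therefore alternate between two values $p$ and $s-p$.
\begin{itemize}
\item If $p=s-p$ on some edge, then by connectivity every vertex has degree $p$, so $H$ is regular.
\item Otherwise, the vertices of degree $p$ and those of degree $s-p$ form two classes with every edge joining the two classes. So $H$ is bipartite with these classes and biregular.
\end{itemize}
A small degenerate check is needed when $L(H)$ has a single vertex, which falls under the regular case.

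The main obstacle is just the bookkeeping in Step 1: connectivity of $G$ has to be used to exclude isolated vertices of $H$ carrying nonempty cocktail parties, together with the trivial cases such as $CP(1)$. After that, the strict inequality of Step 3 does all the work.
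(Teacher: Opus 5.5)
Your argument is correct and is the standard one. The paper gives no proof of its own but cites \cite[Propositions 1.1.5 and 1.1.9]{-2}, which are exactly your two steps: comparing the degree $2a_i-2+d_i$ of a cocktail party vertex with that of an adjacent edge-vertex forces all $a_i=0$ unless $G$ is a cocktail party graph, and a connected graph has a regular line graph exactly when it is regular or biregular. The only point you assert rather than prove is the connectivity of $H$ in Step~1. It is immediate: a vertex of $CP_i$ is adjacent only to vertices of $CP_i$ and to edges at $v_i$, so no path in the connected graph $G$ can pass between edges of $H$ lying in different components of $H$.
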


A graph is \emph{biregular} if it is bipartite in such a way that for each bipartition class, the vertices all have the same degree. If the first class is of size $n_1$ and the vertices have degree $k_1$, and the second class is of size $n_2$ and the vertices have degree $k_2$, then we necessarily have $n_1k_1=n_2k_2$, as both are equal to the number of edges in the graph. The line graph of a biregular graph with these parameters is $(k_1+k_2-2)$-regular.

For regular exceptional graphs, we have the following classification, which summarizes a collection of results from \cite[Chapter 4]{-2} (specifically: Theorems 4.1.5, 4.4.20, Propositions 4.1.3, 4.1.7, 4.4.11, 4.4.13, 4.4.14).

\begin{thm}[{\cite[Chapter 4]{-2}}]\thlabel{thm:regexcep}
    There are exactly 187 regular exceptional graphs. They are all an element of $\mathcal S_8$, have smallest eigenvalue $-2$, and they subdivide into three classes as follows.
    \begin{itemize}
        \item Exactly 163 regular exceptional graphs have ratio bound $4$. All but five of these are induced subgraphs of one of the three Chang graphs. The five exceptions all have 22 vertices.
        \item Exactly 21 regular exceptional graphs have ratio bound $3$. These are all induced subgraphs of the Schläfli graph.
        \item Exactly 3 regular exceptional graphs have ratio bound $8/3$. These are induced subgraphs of the Schläfli graph.
    \end{itemize}
\end{thm}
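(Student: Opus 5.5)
Since this theorem collects results from \cite[Chapter 4]{-2}, the natural plan is to reconstruct the argument there. The key tool is the ``all-ones eigenvector trick'' applied to an $E_8$-representation, followed by a finite enumeration inside a small number of explicit configurations.

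First, let $G$ be a $k$-regular exceptional graph on $n$ vertices, with an $E_8$-representation $f$ (scaled so $\langle f(u),f(u)\rangle = 8$). The Gram matrix is $4(A+2I)$, which is positive semidefinite of rank at most $8$. So $\y_{\min}(G)\ge -2$, and since $n>8$ for exceptional regular graphs (small cases are checked directly), the eigenvalue $-2$ occurs, giving $\y_{\min}=-2$. Put $s=\sum_{u}f(u)$. Regularity gives $\langle s, f(v)\rangle = 4(k+2)$ for every vertex $v$, and $\langle s,s\rangle = 4n(k+2)$. Hence all representing vectors lie on the affine hyperplane $\langle x, s\rangle = 4(k+2)$. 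The vector $b = \frac{n}{k+2}\cdot\frac{s}{n}$ has $\langle b, f(v)\rangle$ constant, and $\|b\|^2$ is a fixed multiple of $n/(k+2)$. Thus the quantity $2n/(k+2)$ is exactly the ratio bound $n/h(G)$.

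Second, since $s$ lies in the $E_8$ lattice (an even unimodular lattice after rescaling), the constraint that $\langle s, r\rangle$ takes the single value $4(k+2)$ on all vectors $r$ of the representation forces $s$ to be, up to scaling, one of very few orbit types of lattice vectors under the Weyl group $W(E_8)$. The remaining data are the vectors $r$ of the root system on the hyperplane $\langle x,s\rangle = \text{const}$ with pairwise inner products $0$ or $4$. Enumerating these orbit types (using the transitivity results \cite[Proposition 3.2.4]{-2}) should yield exactly three possibilities:
\begin{itemize}
\item $s$ proportional to a root such as $e$, giving ratio bound $4$ and the representation inside the $56$ roots at $60^\circ$ to $e$, i.e. the cone setting of \thref{prop:SeidelNbh};
\item $s$ proportional to a sum of two roots at $60^\circ$, giving ratio bound $3$ and a $27$-vector configuration, which is the Schläfli graph;
\item one further orbit, giving ratio bound $8/3$.
\end{itemize}
In the first case \thref{prop:SeidelNbh} immediately puts $G$ in $\mathcal S_8$. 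In the second and third cases one checks that the $27$ available vectors represent the Schläfli graph, which lies in $\mathcal S_8$ by \thref{exa:Schlaefli}, so $G$ is an induced subgraph of it and again lies in $\mathcal S_8$.

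Third, within each case, the classification is a finite search for regular induced subgraphs. In case one, this means regular induced subgraphs of graphs $L(H)_F$ with $|V(H)|\le 8$ that are not generalized line graphs. One reduces by switching classes: by \thref{exa:Chang}, induced subgraphs of the three Chang graphs arise from subgraphs of $K_8$ with appropriate degree conditions, and the five $22$-vertex exceptions are found as those not embeddable in a Chang graph. In cases two and three, one enumerates regular induced subgraphs of the Schläfli graph, obtaining $21$ and $3$ graphs respectively. The count $163+21+3=187$ is then a bookkeeping (computer-assisted) verification, discarding generalized line graphs using \thref{prop:reggl}.

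The main obstacle is the second step: proving that the single-value condition on $\langle s,\cdot\rangle$ really leaves only these three orbit types. This requires showing that $\|s\|^2$ is bounded (which bounds $n\le 28$ and $k\le 16$) and then classifying lattice vectors of small norm up to $W(E_8)$. I would attack it by first bounding $n$ via the rank condition, namely that at most $\binom{8}{2}\cdot 2$ mutually non-obtuse roots lie on a hyperplane. After that, I would do a case analysis on the norm of the primitive lattice vector in the direction of $s$.
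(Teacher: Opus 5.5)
Your architecture matches what lies behind this theorem. The paper itself gives no proof and only quotes \cite[Chapter 4]{-2}. The architecture is: an $E_8$-representation, the sum vector $s$, three layers with ratio bounds $4,3,8/3$, the cone criterion \thref{prop:SeidelNbh} for the first layer, the Schl\"afli graph for the others, and a computer enumeration for the counts $163,21,3$ and the Chang-graph statement. The three directions you list for $s$ are even the right ones. However, the step you flag as the main obstacle carries essentially all of the mathematical content, and your plan for it does not work.

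\begin{itemize}
\item The fact that $s$ lies in the $E_8$ lattice carries no information, since every sum of roots does.
\item Knowing the direction of $s$ does not fix the hyperplane. With roots of norm $2$ and $w=\gamma+\gamma'$, $\langle\gamma,\gamma'\rangle=1$, both $\langle f(v),w\rangle=1$ and $\langle f(v),w\rangle=2$ are a priori possible. These give $n/(k+2)=6$ or $\tfrac32$ respectively.
\item Bounding $n$ and $k$ only bounds $\|s\|^2$; your bound on mutually non-obtuse roots on a hyperplane is in any case only asserted. That leaves a large list of $W(E_8)$-orbits of primitive vectors, with nothing in your plan to exclude all but three.
\end{itemize}

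What is missing is an integrality statement for the right vector, together with a spectral inequality.

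\begin{itemize}
\item With roots of norm $2$, put $u=s/(k+2)$. Then $\langle u,f(v)\rangle=1$ for all $v$, and $\|u\|^2=n/(k+2)$ is half the ratio bound.
\item Let $L$ be the lattice generated by $f(V)$. It is a root lattice, irreducible because $G$ is connected, and of type $E_6$, $E_7$ or $E_8$ because representations of type $A$ or $D$ give generalized line graphs. Since $u$ has integral inner products with the generators of $L$, we get $u\in L^*$.
\item $A+2I$ is positive semidefinite of rank $d=\operatorname{rank}L$. Its eigenvalue $k+2$ is simple and its other $d-1$ nonzero eigenvalues are smaller than $k+2$. Hence $2n=\operatorname{tr}(A+2I)<d(k+2)$, i.e.\ $\|u\|^2<d/2$.
\item Norms in $E_8$ are even, norms in $E_7^*$ lie in $2\mathbb Z\cup(2\mathbb Z+\tfrac32)$, and norms in $E_6^*$ lie in $2\mathbb Z\cup(2\mathbb Z+\tfrac43)$. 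This forces $\|u\|^2\in\{2,\tfrac32,\tfrac43\}$. Correspondingly, $u$ is a root, a minimal vector of $E_7^*$, or a minimal vector of $E_6^*$.
\end{itemize}

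The candidate representing vectors are then as follows.
\begin{itemize}
\item For $u$ a root: the $56$ roots of $E_8$ at $60^\circ$ to $u$, which is the cone situation.
\item For $u$ minimal in $E_7^*$: the $27$ roots of $E_7$ with $\langle x,u\rangle=1$, which form the Schl\"afli graph.
\item For $u$ minimal in $E_6^*$: the $16$ roots of $E_6$ with $\langle x,u\rangle=1$, which form the $10$-regular local graph of the Schl\"afli graph.
\end{itemize}
So the third layer lives in a $16$-vector configuration, not the $27$-vector one you describe. Once this classification is in place, the rest of your plan (the computer-assisted enumeration inside these configurations) is sound.
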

The three classes are referred to as \emph{layers}. Note that the three Chang graphs are regular exceptional graphs of the first layer, and that the Schläfli graph is of the second layer.

\subsubsection*{Maximal exceptional graphs}

The exceptional graphs that are maximal with respect to induced subgraphs are classified. There are 473 maximal exceptional graphs (named $G001$ up to $G473$ in \cite{-2}). As is described in \cite[Chapter 6]{-2}, it is convenient to subdivide the maximal exceptional graphs into three types: (a) those with maximal degree 28 and exactly 29 vertices, (b) those with maximal degree 28 and more than 29 vertices, and (c) those with maximal degree less than 28. There are 430 graphs of type (a), 37 of type (b), and 6 of type (c).

Note that graphs of type (a) are cone graphs; some examples of maximal exceptional graphs of this type are the cone graph over $L(K_8)$, the three cone graphs over the Chang graphs, and the double cone graph over the Schläfli graph (that is, the cone graph over the cone graph over the Schläfli graph).

\section{Hoffman colorability of generalized line graphs}\label{sec:generalizedlinegraphs}
In this section we prove \thref{thm:main} for the case of generalized line graphs. We first treat the case where the smallest eigenvalue is equal to $-2$.

\begin{thm}\thlabel{thm:gl}
    Let $GL$ be a non-empty connected generalized line graph. Then the following are equivalent:
    \begin{itemize}
        \item $GL$ is Hoffman colorable and $\ymin(GL)=-2$,
        \item $GL$ is chromatically balanced.
    \end{itemize}
\end{thm}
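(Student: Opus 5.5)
The plan is to prove both directions through the standard root representation of a generalized line graph. Write $GL=L(G;a_1,\dots,a_n)$; we may discard isolated vertices of $G$ with $a_i=0$, so that connectivity of $GL$ makes $G$ connected. The degenerate case where $G$ is a single vertex, so that $GL\cong CP(a_1)$, is handled separately and directly. Let $N$ be the incidence-type matrix whose rows are indexed by $V(GL)$ and whose columns are indexed by $V(G)$ together with one new coordinate $p$ for each antipodal pair in each $CP_i$. An edge $\{v_i,v_j\}$ of $G$ gets the row $e_{v_i}+e_{v_j}$, and the two vertices of a pair in $CP_i$ get the rows $e_{v_i}\pm e_p$. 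Then $A(GL)+2I=NN^T$. Moreover, since the pair contributions cancel off the diagonal,
\[
N^TN=\bigl(Q(G)+2\,\mathrm{diag}(a_1,\dots,a_n)\bigr)\oplus 2I,
\]
where $Q(G)=D(G)+A(G)$ is the signless Laplacian. Because $NN^T$ and $N^TN$ have the same nonzero eigenvalues, $\ymax(GL)+2=\ymax\bigl(Q(G)+2\,\mathrm{diag}(a)\bigr)$, as long as this quantity exceeds $2$.

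\emph{Balanced $\Rightarrow$ Hoffman colorable with $\ymin=-2$.} Let $GL=L(G,c)$, so $\chi(GL)=c$ by (\ref{eq:chi_gl}). I would exhibit a positive eigenvector: put $1$ on every edge vertex and $\tfrac12$ on every cocktail party vertex. For a cocktail party vertex at $v$, the eigenvalue equation reads
\[
\deg v+(2a_v-2)\cdot\tfrac12=c-1=(2c-2)\cdot\tfrac12 .
\]
For an edge vertex $\{u,v\}$ it reads
\[
(\deg u-1)+(\deg v-1)+a_u+a_v=2c-2 .
\]
So $\ymax(GL)=2c-2$ by Perron--Frobenius. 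By the Cameron--Goethals--Seidel--Shult Theorem we have $\ymin\ge-2$. On the other hand, $c=\chi\ge h(GL)=1+(2c-2)/(-\ymin)$ forces $-\ymin\ge 2$ (here $c\ge2$ by non-emptiness). Hence $\ymin=-2$ and $h(GL)=c=\chi(GL)$.

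\emph{Hoffman colorable with $\ymin=-2$ $\Rightarrow$ balanced.} Let $c=\chi(GL)$. Hoffman tightness gives $\ymax(GL)=2(c-1)$. For each $i$, the edge vertices at $v_i$ together with $CP_i$ induce a subgraph with chromatic number $\deg v_i+a_i$, so $\deg v_i+a_i\le c$ (this is the observation before (\ref{eq:chi_gl})). The row sums of the nonnegative matrix $M=Q(G)+2\,\mathrm{diag}(a)$ are $2\deg v_i+2a_i\le 2c$. Therefore
\[
\ymax(GL)+2=\ymax(M)\le 2c .
\]
Since we have equality and $M$ is irreducible ($G$ connected), the equality case of the row-sum bound for irreducible nonnegative matrices forces all row sums to equal $2c$. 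That is, $a_i=c-\deg v_i$ for all $i$. Since $c=\chi(GL)\ge\chi(L(G))$, this says exactly $GL=L(G,c)$.

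The main obstacle is bookkeeping rather than ideas. One must justify the identity relating $\ymax(GL)$ and $\ymax(M)$, which needs $\ymax(M)>2$ or a separate treatment of tiny cases. One must also make sure the reduction to connected $G$ (dropping isolated vertices with $a_i=0$, which do not affect $GL$) is legitimate, and handle $G=K_1$, where $GL$ is a cocktail party graph and the balance condition $a_1=c$ holds trivially.
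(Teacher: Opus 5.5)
Your proof is correct. Your first direction is essentially the paper's argument. Your eigenvector is half of the paper's test vector $x$, which is $2$ on edge vertices and $1$ on cocktail party vertices. Your deduction of $\lambda_{\min}(GL)=-2$ from $c=\chi\ge h$ is the same sandwich the paper uses.

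For the converse you take a genuinely different route. The paper stays on $A=A(GL)$. It shows $Ax\le tx$ with $t=2(\chi(GL)-1)$, using only $\deg(v_i)+a_i\le\chi(GL)$. It then invokes the equality case of Perron--Frobenius subinvariance, which handles both directions at once. You instead pass through the root representation to the $n\times n$ matrix $M=Q(G)+2\,\mathrm{diag}(a)$ and use the equality case of the row-sum bound there.

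The two arguments are dual forms of one another. The all-ones vector on $V(G)$, which is the test vector behind the row-sum bound for $M$, is mapped by $N$ exactly to the paper's $x$.

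What each buys:
\begin{itemize}
\item The paper's version is shorter. It needs only irreducibility of $A(GL)$, i.e.\ connectivity of $GL$, with no reduction to connected $G$ and no eigenvalue transfer.
\item Yours explains where the weights $2$ and $1$ come from. It also extends the identity $\lambda_{\max}(L(G))+2=q_1(G)$ for the signless Laplacian to generalized line graphs.
\end{itemize}

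The bookkeeping you flag is harmless:
\begin{itemize}
\item For any real $N$, $\lambda_{\max}(NN^T)=\lambda_{\max}(N^TN)$. Since $GL$ is non-empty, $\lambda_{\max}(GL)+2>2$, so this common value cannot come from the $2I$ block and must equal $\lambda_{\max}(M)$.
\item The case $G=K_1$ needs no separate treatment, because a $1\times 1$ matrix is trivially irreducible.
\end{itemize}
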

\begin{proof}
    Let $GL=L(G;a_1,\dots,a_n)$ be a generalized line graph.
    
    Let $x:V(GL) \to \R$ be the vector assigning 1 to cocktail party vertices, and 2 to vertices originating from edges of $G$. Write $A$ for the adjacency matrix of $GL$, write $c_i=\deg(v_i)+a_i$ for every $1\le i \le |V(G)|$, and write $t=2(\chi(GL)-1)$. Note that $2(c_i-1)\le t$ for every $i$ by \eqref{eq:chi_gl}.
    
    Let $w$ be a vertex of the $i$'th cocktail party subgraph of $GL$. Then
    $$(Ax)(w)=\deg(v_i) \cdot 2+2(a_i-1) \cdot 1=2(c_i -1)\le t.$$
    Now let $ij\in E(G) \subseteq V(GL)$, then
    $$(Ax)(ij)= (\deg(v_i)+\deg(v_j)-2)\cdot 2+(2a_i+2a_j)\cdot 1=2(c_i + c_j-2) \le 2t.$$
    Hence $Ax \le tx$ as vectors. By the Perron-Frobenius Theorem we have $\y_{\max}(GL)\le t$ with equality if and only if $Ax=t x$ (see for example \cite[Theorem 2.2.1(iv)]{spectra}).
    
    Note that $\ymax(GL)\le t$ is equivalent to
    $$\chi(GL)\ge 1+\frac{\ymax(GL)}2,$$
    and in fact by \eqref{eq:Hoffman} and $\ymin(GL)\ge -2$ the Hoffman bound $h(GL)$ is sandwiched in between:
    $$\chi(GL) \ge h(GL) \ge 1+\frac{\ymax(GL)}2.$$
    Now equality $\ymax(GL)=t$ is equivalent to $GL$ being Hoffman colorable and having smallest eigenvalue $-2$.
    
    Lastly, note that $Ax=tx$ if and only if $2c_i-2=t$ for all $i$, which happens if and only if $GL$ is chromatically balanced. This concludes the proof.
\end{proof}

\begin{thm}\thlabel{thm:GL>-2}
    If $GL=L(G;a_1,\dots,a_n)$ is a connected non-trivially Hoffman colorable generalized line graph with $\y_{\min}(GL)>-2$, then $a_1=\dots=a_n=0$ (so $GL$ is a line graph). In particular, the graph from Figure \ref{fig:lollipoptensor} is the unique non-trivially Hoffman colorable generalized line graph with $\y_{\min}(GL)>-2$.
\end{thm}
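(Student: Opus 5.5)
Suppose some $a_i\ge 1$; I will show this forces $\ymin(GL)=-2$, contradicting the hypothesis. The key fact is that a generalized line graph with a nonempty cocktail party part almost always has $-2$ as an eigenvalue. Concretely, if some $a_i\ge 1$, then $CP_i$ contains a non-adjacent pair $w,w'$ with identical neighbourhoods in $GL$. Indeed, the two vertices in one class of $CP(a_i)$ are non-adjacent, both are adjacent to all other vertices of $CP_i$, and both are adjacent to all edges of $G$ at $v_i$, and to nothing else. The vector $e_w-e_{w'}$ is then an eigenvector of $A$ with eigenvalue $0$, not $-2$, so this alone does not finish the argument.

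A better route is through the Gram representation. A generalized line graph with $a_i\ge 1$ is represented in the $D_n$ root system as follows. Each edge $v_jv_k$ of $G$ maps to $e_j+e_k$, and the two vertices of a class of $CP_i$ map to $e_{n+t}\pm e_{n+s}$-type vectors, in the standard representation $A+2I=N^TN$. The smallest eigenvalue exceeds $-2$ iff the representing vectors are linearly independent. For a single class $\{w,w'\}$ of $CP_i$, the standard representation uses vectors $e_i+e_{p}$ and $e_i-e_{p}$ with a fresh coordinate $p$. If $v_i$ has a neighbour $v_j$ in $G$ (true since $GL$ is connected and has edges, unless $G$ has no edges, in which case $GL$ is a cocktail party graph), one must examine whether dependence arises. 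The independence question reduces to counting: $\ymin>-2$ iff the number of vertices of $GL$ is at most the rank, which for a generalized line graph is the number of vertices of the underlying root multigraph minus the number of bipartite components with no petals. Components of the underlying multigraph containing a petal (a double edge) are never "bipartite-type", so a petal contributes no deficiency. I will therefore derive the conclusion spectrally instead, using the Hoffman structure.

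The plan for the spectral part is as follows. Let $GL$ be non-trivially Hoffman colorable with $\ymin>-2$ and some $a_i\ge1$. By the Decomposition Theorem \thref{thm:Decomp} and \eqref{eq:eigenvalues of connected component of dichromatic component}, every connected component $K$ of a dichromatic component satisfies $\ymax(K)=-\ymin(GL)<2$, so each $K$ is a Dynkin graph with the same largest eigenvalue $2\cos(\pi/m)$. In a Hoffman coloring, the two vertices $w,w'$ of a class of $CP_i$ are twins, and $CP_i$ together with the edges at $v_i$ forms a clique-like structure. I choose the colour classes of $w$ and of any third colour class. The dichromatic component containing $w$ and $w'$ in different classes then contains, through a common neighbour $u$ in the other colours (which exists by \thref{prop:univ}), a configuration $w-u-w'$ together with further neighbours. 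Varying the second colour over all other classes, twins $w,w'$ give components containing $K_{1,2}$ centred at each neighbour colour, and one would show that some component contains $D_4$ or a vertex of degree $\ge3$. I expect matching the Perron values across components to force a Smith graph (value $2$), which is a contradiction.

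This combinatorial forcing is the main obstacle. If it proves unwieldy, my fallback is a direct linear-algebra argument. Using $\chi=h=1+\ymax/(-\ymin)$ with $-\ymin<2$, compare with the bound $\ymax\le t=2(\chi-1)$ from the proof of \thref{thm:gl}. With $\theta=-\ymin$, Hoffman colorability gives $\ymax=(\chi-1)\theta$. One then tests the Perron eigenvector $x$ against the twin structure: twins have equal $x$-values, and the eigen-equations at $w$ and at an adjacent edge-vertex, combined with equitability of colour classes, should force $\theta=2$. Once all $a_i=0$, \thref{thm:linegraphs} finishes the proof. A 1-factorable $G$ gives a regular $L(G)$ with $\ymin=-2$ unless $G$ is bipartite-free of the right kind; in fact line graphs of regular graphs with more edges than vertices have $-2$ as an eigenvalue, and a $k$-regular graph with $k\ge2$ nontrivial has this property. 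The only remaining case is therefore the graph of Figure \ref{fig:K3withleaves}.
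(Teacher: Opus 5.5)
There is a genuine gap. The core of the statement is that a petal ($a_i\ge 1$) cannot coexist with non-trivial Hoffman colorability when $\lambda_{\min}(GL)>-2$. This is never proved: your main plan and your fallback both stop at ``I expect \dots\ to force a Smith graph'' and ``should force $\theta=2$''.

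Neither mechanism can work as described. Your opening goal is already false: $D_r=L(P_{r-1};1,0,\dots,0)$, with the petal at an end vertex, is a connected generalized line graph with a petal, has $\lambda_{\min}>-2$, and is Hoffman colorable (trivially, being bipartite). It is also exactly what the dichromatic component through the twin pair turns out to be here. So local analysis of the kind you list cannot by itself give a contradiction: looking for $D_4$ or degree-$3$ vertices, or matching Perron values at twins and their neighbors. All these constraints are satisfied by $D_r$, so any contradiction must use $\chi\ge 3$ essentially, and your sketches do not. Note also that the twins $w,w'$ must lie in the same color class. Otherwise $w$ would have no neighbor in the class of $w'$, contradicting \thref{prop:univ}. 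So the configuration with $w,w'$ in different classes never occurs.

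The missing ingredients are these.

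\emph{First}, finish the rank count you set up and then abandoned. If some $a_i\ge1$, the representing vectors span a space of dimension $n+\sum_i a_i$, while $GL$ has $m+2\sum_i a_i$ vertices. Hence $\lambda_{\min}(GL)>-2$ forces $m-n+\sum_i a_i=0$; this is \cite[Theorem 2.2.8]{-2}, as used in the paper. Since $G$ is connected, $m\ge n-1$, so $G$ is a tree and there is exactly one petal.

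\emph{Second}, put the two cocktail party vertices $p,p'$ in color $1$. The component $K$ of the color-$1$/color-$2$ subgraph containing them is $L(H;\dots)$ with $H$ a path ending at $v_i$, by bipartiteness and \eqref{eq:chi_gl}. Hence $K\cong D_{s+1}$ and $\nu=\lambda_{\max}(K)=2\cos(\pi/(2s))$.

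\emph{Third}, the decisive step is arithmetic rather than combinatorial. The conjugates of $2\cos(\pi/(2s))$ are the numbers $2\cos(k\pi/(2s))$ with $\gcd(k,4s)=1$. This set is closed under negation, so the minimal polynomial of $\nu$ is even. By \thref{lem:Galois conjugate}, via $\lambda_{\max}(GL)=(\chi-1)\nu$, the number $-\lambda_{\max}(GL)$ is an eigenvalue of $GL$. Then $h(GL)=2$ and $GL$ is bipartite, contradicting non-triviality.

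Your final paragraph, treating $a_1=\dots=a_n=0$ via \thref{thm:linegraphs}, is essentially the paper's argument. One correction there: for valency $2$, $-2$ is an eigenvalue because a $1$-factorable cycle is even (bipartite), not because $m>n$.
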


In order to prove \thref{thm:GL>-2}, we need the following lemma.

\begin{lem}\thlabel{lem:Galois conjugate}
    Let $G$ be a non-empty Hoffman colorable graph with a positive eigenvector. Let $K$ be a connected component of a dichromatic component of $G$, and write $\nu=\y_{\max}(K)$. If the minimal polynomial $f^\nu_\Q$ of $\nu$ over $\Q$ is even (i.e. only contains even powers of the variable), then $G$ is bipartite.
\end{lem}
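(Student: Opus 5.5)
We argue by contradiction: suppose $G$ is not bipartite, so $\chi:=\chi(G)\ge 3$. By \eqref{eq:eigenvalues of connected component of dichromatic component} we have $\ymax(G)=(\chi-1)\nu$ and $\ymin(G)=\ymin(K)$. Since $K$ is connected and bipartite, its spectrum is symmetric around $0$, so $\ymin(K)=-\nu$. Hence $\ymin(G)=-\nu$ and $\ymax(G)=(\chi-1)\nu$, which is exactly the Hoffman equality $\chi=1-\ymax/\ymin$. The plan is to extract a contradiction from the fact that the characteristic polynomial $p_G$ of $G$, which has integer coefficients, has both roots $(\chi-1)\nu$ and $-\nu$.

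The key step is to use the Galois structure. Since $-\nu=\ymin(G)$ is a root of $p_G\in\Z[x]$, the minimal polynomial $f^{-\nu}_\Q$ divides $p_G$. Because $f^\nu_\Q$ is even, $f^{-\nu}_\Q(x)=\pm f^\nu_\Q(-x)=\pm f^\nu_\Q(x)$, so $-\nu$ and $\nu$ have the same minimal polynomial. Likewise $(\chi-1)\nu$ is a root of $p_G$; its minimal polynomial is $g(x)=f^\nu_\Q(x/(\chi-1))$ up to scaling, and $g$ is also even. Therefore $-(\chi-1)\nu$ is a root of $g$, hence a root of $p_G$, hence an eigenvalue of $G$. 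But $-(\chi-1)\nu<-\nu=\ymin(G)$ whenever $\chi\ge 3$ and $\nu>0$. This contradicts the minimality of $\ymin(G)$. Note that $\nu>0$ holds because $K$ contains an edge: $G$ has a positive eigenvector and is non-empty, so by \thref{prop:univ} every vertex of a dichromatic component has a neighbour in the other class.

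The argument can be simplified further, since only the evenness of the minimal polynomial of $(\chi-1)\nu$ is needed. Scaling by the rational number $\chi-1$ preserves evenness, because $f^{c\nu}_\Q(x)$ is proportional to $f^\nu_\Q(x/c)$. So $(\chi-1)\nu$ and $-(\chi-1)\nu$ are Galois conjugate, and both are eigenvalues of $G$. This requires only that $\ymax(G)=(\chi-1)\nu$, together with $\ymin(G)=-\nu$ for the final comparison.

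The main point requiring care is the justification of \eqref{eq:eigenvalues of connected component of dichromatic component}, which is already established earlier in the paper via the Decomposition Theorem. The remaining steps are elementary Galois theory, with attention to the degenerate case $\nu=0$, which is handled as above. Thus $\chi\le 2$, and $G$ is bipartite.
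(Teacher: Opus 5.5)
Your proof is correct and follows essentially the same route as the paper: in both, the key step is that evenness of the minimal polynomial of $\nu$ transfers to the minimal polynomial of $\lambda_{\max}(G)=(\chi-1)\nu$, so $-\lambda_{\max}(G)$ is an eigenvalue of $G$. The paper concludes directly that $\lambda_{\min}(G)=-\lambda_{\max}(G)$, hence $h(G)=2$, whereas you compare $-(\chi-1)\nu$ with $\lambda_{\min}(G)=-\nu$ to get a contradiction when $\chi\ge 3$; the difference is only cosmetic.
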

\begin{proof}
    Since $f^\nu_\Q$ is an even polynomial, it also has $-\nu$ as a root. By \eqref{eq:eigenvalues of connected component of dichromatic component} we know that $\ymax\coloneqq \ymax(G)=(\chi-1)\nu$, and so for the minimal polynomial of $\ymax$ over $\Q$ we have $f^{\ymax}_\Q(t)=(\chi-1)^d \cdot  f^\nu_\Q(t/(\chi-1))$ where $d$ is the degree of $f^\nu_\Q$. Now also $-\y_{\max}$ is a root of $f^{\y_{\max}}_\Q$. Since $f^{\ymax}_\Q$ divides the characteristic polynomial of $G$, also $-\ymax$ is an eigenvalue of $G$. By \cite[Proposition 3.1.1]{spectra}, specifically $-\ymax$ is the smallest eigenvalue of $G$. Now the Hoffman bound \eqref{eq:Hoffman} is $h(G)=2$. Because $G$ is Hoffman colorable, it is bipartite.
\end{proof}

\begin{proof}[Proof of \thref{thm:GL>-2}]
    Let $GL=L(G;a_1,\dots,a_n)$ be a connected generalized line graph with smallest eigenvalue strictly larger than $-2$. If $a_1=\dots=a_n=0$, then \thref{thm:linegraphs} applies. Since line graphs of 1-factorable graphs have smallest eigenvalue $-2$ (see \thref{rmk:1-factorable,thm:gl}), the only non-trivially Hoffman colorable line graph with smallest eigenvalue strictly larger than $-2$ is the graph from Figure \ref{fig:lollipoptensor}.
    
    Next, suppose $(a_1,\dots,a_n) \ne (0,\dots,0)$. Since $GL$ is connected, also $G$ is connected, and so if $m$ denotes the number of edges of $G$, then we know that $m\ge n-1$. Since $-2$ is not an eigenvalue of $GL$, \cite[Theorem 2.2.8]{-2} gives $0=m-n+\sum_{i=0}^n a_i$, so that there is a unique $i$ with $a_i=1$, and $a_j=0$ for $j\ne i$. Hence, there are exactly two cocktail party vertices $p$ and $p'$ in $GL$.

    Now suppose that $GL$ is Hoffman colorable. We claim that $GL$ is bipartite. This implies that $GL$ is trivially Hoffman colorable, proving the result.

    Since $p$ and $p'$ have the same neighbors, we may assume without loss of generality that they are assigned the same color (say 1). Let $K$ be the connected component containing $p$ and $p'$ of the induced subgraph of $GL$ on the vertices of color 1 and 2. Write $K=L(H;a_{j_1},\dots,a_{j_m})$ for some subgraph $H$ of $G$, with $V(H)=\{v_{j_1},\dots,v_{j_m}\}$ (with $v_i\in V(H)$). Since $K$ is bipartite, from \eqref{eq:chi_gl} it follows that $H$ is a path graph such that $v_i$ is one of the two end vertices. Now $K$ is a Dynkin graph of type $D$.

    Let $m+1$ be the number of vertices of $K$, so that $K\cong D_{m+1}$ and $\nu=\y_{\max}(K)=2\cos (\pi /(2m))$ (see \cite[Section 3.1.1]{spectra}). The roots of the minimal polynomial $f^\nu_\Q$ are $2\cos( k\cdot \pi/(2m))$ for $k$ coprime to $4m$ \cite{minpol}. In particular, the roots of $f^\nu_\Q$ are symmetric around zero, and so $f^\nu_\Q$ is even. By \thref{lem:Galois conjugate} we know $GL$ is bipartite, concluding the proof.
\end{proof}

\section{Hoffman colorability of exceptional graphs}\label{sec:exceptional}
In this section, we prove \thref{thm:main} for the case of exceptional graphs. First, we show that we can restrict to the case where the smallest eigenvalue is equal to $-2$.

\begin{thm}\thlabel{thm:excep>-2}
    There exists no non-trivially Hoffman colorable exceptional graph $G$ with $\y_{\min}(G)>-2$.
\end{thm}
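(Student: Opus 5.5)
The approach is to reduce to a finite, very small configuration using the Dynkin classification of dichromatic components together with \thref{lem:Galois conjugate}, and then to rule out the few remaining cases.

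\textbf{Setup.} Let $G$ be exceptional with $\ymin(G)>-2$, and suppose it is non-trivially Hoffman colorable with $\chi=\chi(G)\ge 3$. Since $G$ is connected, it has a positive eigenvector.
\begin{itemize}
\item Take an $E_8$-representation of $G$. Its Gram matrix is $2(A+2I)$, which is positive definite because $\ymin>-2$. Hence the representing vectors are linearly independent in $\R^8$, and $|V(G)|\le 8$.
\item Let $K$ be a connected component of a dichromatic component. It is bipartite, and by \thref{prop:subgraph} it satisfies $\ymin(K)\ge\ymin(G)>-2$. So $\nu=\ymax(K)<2$ and $K$ is a Dynkin graph.
\item Consequently $\nu=2\cos(\pi/h)$, where $h$ is the Coxeter number: $h=n+1$ for $A_n$, $2n-2$ for $D_n$, and $12,18,30$ for $E_6,E_7,E_8$.
\end{itemize}

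\textbf{Parity of the Coxeter number.} The Galois conjugates of $2\cos(\pi/h)$ are the numbers $2\cos(k\pi/h)$ with $k$ coprime to $2h$. If $h$ is even, then $h-k$ is again coprime to $2h$, so the conjugates are symmetric around $0$ and $f^\nu_\Q$ is even. By \thref{lem:Galois conjugate} $G$ would then be bipartite, a contradiction. Hence $h$ is odd, so every such $K$ is a path $A_{2r}$ on an even number of vertices. By \eqref{eq:eigenvalues of connected component of dichromatic component}, the value $r$ is the same for all of them.

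\textbf{Consequences for the coloring.} Every dichromatic component is a disjoint union of paths $A_{2r}$, and each such path is balanced. Therefore any two color classes have equal size, so all classes have a common size $s$ with $\chi s=|V(G)|\le 8$.
\begin{itemize}
\item If $s=1$, then $G$ is complete, which is trivially Hoffman colorable.
\item Hence $s=2$ and $\chi\in\{3,4\}$. Each dichromatic component has $4$ vertices, so $K\cong A_4$ (the alternative $2A_2$ gives $\nu=1$, $\ymin=-1$, and again $G$ is complete).
\item So $\nu=\varphi$, $\ymin(G)=-\varphi$ and $\ymax(G)=(\chi-1)\varphi$.
\end{itemize}

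\textbf{Case $\chi=3$, $|V(G)|=6$.} Here every pair of color classes induces a $P_4$. I will classify these $6$-vertex graphs directly; the only ones should be the three-class graphs built from three $P_4$'s. Checking $\ymax=2\varphi$ should force the graph of Figure \ref{fig:lollipoptensor}. That graph is a line graph, hence not exceptional.

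\textbf{Case $\chi=4$, $|V(G)|=8$.} By \thref{thm:Decomp}, every $3$-chromatic component is Hoffman colorable with smallest eigenvalue $-\varphi$, and it carries the restricted positive eigenvector. Its dichromatic components are $P_4$'s, so it is connected, and by the previous case it must be the graph of Figure \ref{fig:lollipoptensor}. So $G$ is an $8$-vertex graph in which the union of every three of the four color classes induces that graph.

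I expect this last step to be the main obstacle. I would try to show that no such graph exists, using the positive eigenvector: its entries are forced locally by each copy of the Figure~\ref{fig:lollipoptensor} graph. Compatibility of these entries across the four overlapping copies, together with $Ax=3\varphi x$, should either give a contradiction or show that $G$ is a line graph. As a fallback, this is a finite check. One can use the known list of the $573$ exceptional graphs with $\ymin>-2$, all on at most $8$ vertices, and verify by computer that none of them has this structure.
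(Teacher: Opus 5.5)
Your argument is correct in its main line and follows a genuinely different route from the paper.

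\textbf{How the two routes differ.} The paper looks up the $573$ exceptional graphs with $\ymin>-2$ in the tables of Cvetkovi\'c--Rowlinson--Simi\'c and notes that only $E_6,E_7,E_8$ have an integral Hoffman bound. Its alternative is to combine $|V(G)|\le 8$ with the computer enumeration of small Hoffman colorable graphs from the earlier paper. You instead prove $|V(G)|\le 8$ directly from positive definiteness of the Gram matrix. With the paper's normalization that matrix is $4(A+2I)$, not $2(A+2I)$, but this is harmless. You then extend the Galois-conjugate argument used for $D_n$ in the proof of \thref{thm:GL>-2} to every Dynkin graph, via the parity of the Coxeter number. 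This forces every dichromatic component to be a union of copies of the same $A_{2r}$ and all color classes to have equal size, hence $s=2$, $\chi\in\{3,4\}$ and $\ymin(G)=-\varphi$. Your route is nearly computer-free and explains why the result holds. The structural part applies to any connected non-trivially Hoffman colorable graph with $\ymin>-2$. The price is a small hand analysis at the end.

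\textbf{The final step.} That hand analysis is where your sketch is not yet right. Both cases can be done at once with the Perron vector $x$.
By \thref{thm:Decomp}, on each $P_4=G[V_i\cup V_j]$ the vector $x$ is a multiple of $(1,\varphi,\varphi,1)$. So in $V_i=\{v_i,v_i'\}$, the vertex $v_i$ with the smaller entry is an end vertex of every such $P_4$. Hence the unique non-edge between $V_i$ and $V_j$ is $v_iv_j$. It follows that $\{v_i\}$ is a coclique, $\{v_i'\}$ is a clique, and $v_i\sim v_j'$ if and only if $i\neq j$. Thus
\[
A=\begin{pmatrix}0&J-I\\ J-I&J-I\end{pmatrix},
\]
whose spectrum is $(\chi-1)\varphi$ and $-(\chi-1)/\varphi$, together with $1/\varphi$ and $-\varphi$, each of multiplicity $\chi-1$. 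For $\chi=3$ this is exactly the graph of Figure~\ref{fig:lollipoptensor}.

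For $\chi=4$, the entries $1$ on the $v_i$ and $\varphi$ on the $v_i'$ do satisfy $Ax=3\varphi x$. The forced graph also contains a claw centered at $v_1'$. So compatibility plus the eigenvalue equation gives neither a contradiction nor a line graph, contrary to what you expect. The contradiction comes from the spectrum instead: $\ymin=-3/\varphi<-\varphi$, so the Hoffman bound is $1+\varphi^2\notin\Z$. With this computation, or with your computer fallback (which is essentially the paper's proof), the argument is complete.
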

\begin{proof}
    By \cite[Theorem 2.3.20]{-2} there are 573 exceptional graphs with smallest eigenvalue larger than $-2$, which are listed in \cite[Table A2]{-2}. Of these, 20 have six vertices, 110 have seven vertices, and 443 have eight vertices. The largest and smallest eigenvalues of the $110+443$ graphs on seven and eight vertices are listed in \cite[Table A2]{-2}, and we can read off that only one graph on seven vertices (namely Dynkin graph $E_7$), and only one graph on eight vertices (namely Dynkin graph $E_8$) have an integral Hoffman bound, but these graphs are bipartite. The 20 graphs on six vertices are given in \cite[Figure 2.4]{-2}, and one can check that in this case also exactly one graph has an integral Hoffman bound (namely Dynkin graph $E_6$), but again it is bipartite. Since a Hoffman colorable graph has an integral Hoffman bound, we conclude that all Hoffman colorable exceptional graphs $G$ with $\y_{\min}(G) > -2$ are bipartite and thus trivially Hoffman colorable.
\end{proof}

An alternative proof of \thref{thm:excep>-2} is possible; we know from \cite[Theorem 2.3.20]{-2} or the comments in \cite[Section 3.7]{-2} that an exceptional graph with $\ymin(G)>-2$ must have at most eight vertices, and from the results in \cite[Section 5.1]{previouspaper} there is (apart from complete graphs and cocktail party graphs) a unique connected Hoffman colorable with at most eight vertices, namely the graph from Figure~\ref{fig:lollipoptensor}. This graph is not exceptional, as it is the line graph of the graph from Figure~\ref{fig:K3withleaves}.

To outline the order of the cases that are considered in the remainder of our proof of the exceptional case of \thref{thm:main}, we have the following result.
\begin{lem}\thlabel{lem:cases}
    An exceptional graph $G$ satisfies at least one of the following:
    \begin{itemize}
        \item $G$ is a cone graph,
        \item $G\in \mathcal S_8$ (the class of graphs switching-equivalent to the line graph of a graph on eight vertices),
        \item $G$ is an induced subgraph of a maximal exceptional graph of type (b) or (c).
    \end{itemize}
\end{lem}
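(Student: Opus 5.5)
Every exceptional graph is an induced subgraph of some maximal exceptional graph, by the definition of the partial order together with finiteness. So fix an exceptional graph $G$, choose a maximal exceptional graph $M$ containing $G$ as an induced subgraph, and split according to the type of $M$ among (a), (b), (c) as recalled in the preliminaries. If $M$ has type (b) or (c), the third alternative of the lemma holds and there is nothing to prove. The remaining case is when $M$ has type (a): it then has exactly 29 vertices and maximal degree 28, so $M$ is a cone graph $\widehat{N}$ with universal vertex $u$ and $|V(N)|=28$.

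In that case, if $u\in V(G)$, then $u$ is adjacent to every other vertex of $G$ (since it is universal in $M$), so $G$ is a cone graph. This requires $G\neq\{u\}$, which holds because exceptional graphs are not single vertices. If $u\notin V(G)$, then $G$ is an induced subgraph of $N$. Then $G\cup\{u\}$ induces the cone $\widehat{G}$ as an induced subgraph of $M$, and restricting an $E_8$-representation of $M$ shows that $\widehat{G}$ is $E_8$-representable. By \thref{prop:SeidelNbh}, $G\in\mathcal S_8$.

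The only point that needs care is the existence of an $E_8$-representation of $M$. By \thref{thm:CGSS}, every exceptional graph (connected, with $\lambda_{\min}\ge -2$, and not a generalized line graph) is $E_8$-representable. This also follows directly from our definition of exceptional, which requires $E_8$-representability. Restrictions of representations to induced subgraphs remain representations, since the required inner products depend only on adjacency. So the main possible obstacle disappears, and the argument is a straightforward case analysis on the type of the maximal exceptional graph above $G$.
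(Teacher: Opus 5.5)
Your proposal is correct and follows the paper's proof essentially step for step. You take a maximal exceptional graph above $G$; types (b) and (c) give the third alternative. In type (a) it is a cone, and you split on whether $G$ contains the universal vertex, using \thref{prop:SeidelNbh} when it does not. You are slightly more explicit than the paper about restricting the $E_8$-representation to $\widehat G$ and about $G$ not being a single vertex.
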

\begin{proof}
    Let $H$ be a maximal exceptional graph that contains $G$ as an induced subgraph. If $H$ is of type (b) and (c), the last case holds. Otherwise, $H$ is of type (a) and so $H$ is a cone graph. If $G$ contains the cone vertex, then $G$ is a cone graph itself. If $G$ does not contain the cone vertex, then $G\in \mathcal S_8$ by \thref{prop:SeidelNbh}.
\end{proof}
Note that $\mathcal S_8$ contains all regular exceptional graphs by \thref{thm:regexcep}. The remainder of this section is organized as follows. First, the non-trivially Hoffman colorable regular exceptional graphs and exceptional cone graphs are classified (which are considered together because of the connection outlined by \thref{thm:conegraphs}). After that, we classify the non-trivially Hoffman colorable irregular graphs in $\mathcal S_8$. As we will see in \thref{rmk:no overlap}, this does not include any cone graphs. Finally, we classify the non-trivially Hoffman colorable graphs that are an induced subgraph of one of the 43 maximal exceptional graphs of type (b) or (c) that are not part of the previously mentioned graph classes.

\subsection{Regular exceptional graphs and exceptional cone graphs}\label{sec:regexcepcone}
We classify the non-trivially Hoffman colorable regular exceptional graphs and exceptional cone graphs. After, we study the regular Hoffman colorable graphs with ratio bound 3 in a little more detail, as this is needed for Section \ref{sec:typea}.

\subsubsection{Classification of Hoffman colorable examples}

The following result implies that there are 174 non-trivially Hoffman colorable exceptional graphs that are either a regular graph or a cone graph.

\begin{thm}\thlabel{thm:regexcepcone}
     Every regular non-trivially Hoffman colorable exceptional graph has ratio bound 3 or 4. Moreover, the following hold.
    \begin{itemize}
        \item There are exactly 17 regular non-trivially Hoffman colorable exceptional graphs with ratio bound 3, of which two are maximal with respect to chromatic components, namely $M_{20}$ and $M_{24}$. In other words, each of these 17 graphs is a chromatic component of $M_{20}$ or $M_{24}$.
        \item There are exactly 70 regular non-trivially Hoffman colorable exceptional graphs with ratio bound 4, and each of these is a chromatic component of an exceptional graph $H$ for which $\widehat H \cong M_i$ for some $i\in \{11,\dots,19,26,\dots,29\}$.
        \item There are exactly 87 non-trivially Hoffman colorable exceptional cone graphs, of which 13 are maximal with respect to chromatic components, namely $M_i$ for $i\in \{11,12,\dots,19,26,27,28,29\}$. In other words, each of these 87 graphs is a chromatic component of at least one such $M_i$.
    \end{itemize}
\end{thm}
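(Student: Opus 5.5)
The plan is to reduce everything to the 187 regular exceptional graphs of \thref{thm:regexcep}, and then settle the finitely many remaining cases by a computer check. By \thref{prop:constantequitable}, a regular graph $G$ is Hoffman colorable if and only if $V(G)$ partitions into Hoffman cocliques. The ratio bound is $n/h(G)$, and since $\ymin=-2$ we have $h(G)=1+k/2$. A Hoffman coloring therefore uses $h(G)$ color classes, each of size exactly the ratio bound, and each class must be $2$-regular.

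For the three graphs in the third layer the ratio bound is $8/3$, which is not an integer. So no Hoffman coclique exists there, and the first sentence of the theorem follows. For the other two layers the coclique size is forced: size $3$ in the second layer and size $4$ in the first.

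\textbf{Second layer.} All 21 graphs are induced subgraphs of the Schläfli graph. Every Hoffman coclique of size $3$ is a $2$-regular coclique, so the task is to enumerate, in each of the 21 graphs, all partitions of the vertex set into $2$-regular $3$-cocliques. In practice this is a short exact-cover search.

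To organise it, I would first exhibit $M_{24}$, the Schläfli graph with its partition into $9$ cocliques of size $3$, and $M_{20}$, a $21$-vertex $12$-regular graph with $7$ color classes of size $3$. Each Hoffman colorable graph found should then be recognised as the induced subgraph of $M_{20}$ or $M_{24}$ on a union of color classes. By the Decomposition Theorem \thref{thm:Decomp}, every such union is automatically Hoffman colorable. So the count of 17 amounts to enumerating the isomorphism classes of unions of at least two color classes in all Hoffman colorings of $M_{20}$ and $M_{24}$. We then check that these coincide with the regular second-layer graphs admitting a Hoffman coloring, removing the bipartite (trivial) ones, such as the union of two classes.

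\textbf{Cones and the first layer.} These are handled together via \thref{thm:conegraphs}. A cone $\widehat H$ is Hoffman colorable if and only if $H$ is regular and Hoffman colorable with color classes of size $\ymin(H)^2=4$, i.e.\ $H$ has ratio bound $4$. Moreover, $\widehat H$ is exceptional if and only if it lies in some type (a) maximal graph, which happens if and only if $H\in\mathcal S_8$ (\thref{prop:SeidelNbh}), apart from the generalized line graph cases.

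The plan is as follows. First, enumerate the first-layer graphs (the 158 induced subgraphs of the Chang graphs together with the five 22-vertex exceptions) that partition into $2$-regular $4$-cocliques; the claim is that there are 70. Next, note that $H$ itself may be a generalized line graph, for instance $L(K_8)$, while $\widehat H$ is exceptional. So the 87 cones come from the regular, ratio-bound-$4$, Hoffman colorable members of $\mathcal S_8$, with non-exceptional cones discarded. Finally, identify the 13 maximal ones as the cones over the maximal such $H$ in $\mathcal S_8$ with respect to chromatic components; this yields $M_{11},\dots,M_{19}$ (21 vertices) and $M_{26},\dots,M_{29}$ (29 vertices, the cones over $L(K_8)$ and the three Chang graphs).

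To see that every regular first-layer example is a chromatic component of such an $H$, observe that a union of color classes of $H$ is again regular with ratio bound $4$. Deleting the cone vertex of $M_i$ gives a chromatic component, by \thref{thm:Decomp} and \thref{prop:univ}, since the cone vertex forms a color class of size $1$.

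\textbf{Main obstacle.} The hard part is the exhaustive enumeration of Hoffman colorings in the first layer, which means exact covers by $4$-cocliques in graphs with up to 29 vertices, and the isomorphism bookkeeping needed to reach the exact counts 70 and 87 and the 13 maximal elements. I would carry this out by computer, enumerating $2$-regular $4$-cocliques in each Chang graph and in $L(K_8)$ and then searching for exact covers. The counts would be cross-checked by generating all chromatic components of the 13 claimed maximal graphs and confirming that the two lists agree.
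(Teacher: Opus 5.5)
\textbf{Gap in the cone count.} Your reduction matches the paper's: integrality of the ratio bound removes the third layer, and \thref{prop:constantequitable}, \thref{thm:conegraphs} and \thref{prop:SeidelNbh} turn the rest into a finite search. The ratio-bound-$3$ part is fine. The gap is in counting the cones.

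You correctly identify the bases as the regular, ratio-bound-$4$, Hoffman colorable members $H$ of $\mathcal S_8$, but your search only runs over induced subgraphs of $L(K_8)$ and the three Chang graphs. For exceptional $H$ this is covered by \thref{thm:regexcep}, with the $22$-vertex graphs dropping out since $4\nmid 22$. Nothing in your proposal confines a \emph{non-exceptional} base to $L(K_8)$, though:
\begin{itemize}
\item Membership in $\mathcal S_8$ does not do it: the Schl\"afli graph is a regular member of $\mathcal S_8$ that is an induced subgraph of none of these four graphs.
\item A priori $H$ could be disconnected, a cocktail party graph, the line graph of a biregular graph, or the line graph of a regular graph on more than eight vertices.
\end{itemize}
So the exact count $87$ is not yet justified.

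The missing ingredient is \thref{lem:excepcone}:
\begin{itemize}
\item If $H$ is disconnected, every vertex sees exactly two vertices of each other size-$4$ class, and this forces $H\cong 2\cdot CP(m)$. Its cone $L(K_2;m,m)$ is a generalized line graph.
\item $CP(m)$ has no $4$-coclique.
\item If $H=L(\Gamma)$ is connected, a Hoffman color class is a $4$-matching $M$ of $\Gamma$ with every other edge having both ends among the $8$ endpoints of $M$. Hence $\Gamma$ has $8$ non-isolated vertices. In the biregular case the perfect matching $M$ forces both sides to have size $4$, so $\Gamma$ is regular.
\end{itemize}
Only with this is the search over $L(K_8)$ and the Chang graphs exhaustive. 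It then picks up the $17$ line graphs, $C_8$, $2\cdot C_4$ and $2\cdot CP(3)$ as bases.

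\textbf{Two smaller points.}
\begin{itemize}
\item Your intermediate claim that ``$\widehat H$ is exceptional iff it lies in a type (a) maximal graph'' is unjustified and unnecessary. \thref{prop:SeidelNbh} gives ``$\widehat H$ is $E_8$-representable iff $H\in\mathcal S_8$'' directly.
\item Your observation that unions of color classes of $H$ are regular with ratio bound $4$ proves the wrong inclusion for ``every example is a chromatic component of some $M_i$''. Your final cross-check rescues it, but the clean argument is \thref{lem:regularinduced}. A Hoffman coloring of an induced subgraph with the same ratio bound and smallest eigenvalue is a coclique in the Hoffman coclique graph of the ambient Chang graph, Schl\"afli graph or $L(K_8)$. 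Extending it to a maximal coclique exhibits it as a chromatic component of a maximal example.
\end{itemize}
That same lemma is what lets the paper replace your graph-by-graph exact-cover search, over the catalogued first- and second-layer graphs, with one coclique enumeration. That enumeration runs in the Hoffman coclique graphs of the Schl\"afli graph, $L(K_8)$ and the three Chang graphs.
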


Note that the graphs $G$ from \thref{thm:regexcepcone} have $\mathfrak C(G)$ equal to respectively $\{3\}$, $\{4\}$, or $\{1,4\}$ by \thref{prop:constantequitable,thm:conegraphs}. From \thref{prop:univ} and the proof of \thref{thm:regexcepcone}, it will be clear that a non-trivially Hoffman colorable exceptional graph $G$ with $\mathfrak C(G)=\{1,4\}$ is one of the 87 graphs from \thref{thm:regexcepcone}. However, we will only see later that no non-trivially Hoffman colorable exceptional graphs exist with $\mathfrak C(G)$ equal to $\{3\}$ or $\{4\}$ other than the graphs from \thref{thm:regexcepcone}.

To prove \thref{thm:regexcepcone}, we use three lemmas and a computation. The first lemma makes the reduction from cones to regular graphs explicit.

\begin{lem}\thlabel{lem:excepcone}
    Let $G$ be a non-empty graph such that $\widehat G$ is a Hoffman colorable exceptional cone graph. Then $G$ is either a regular exceptional graph or the line graph of a regular graph on eight vertices. Furthermore, $G$ is Hoffman colorable and has ratio bound 4.
\end{lem}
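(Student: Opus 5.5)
We combine \thref{thm:conegraphs} with the description of $E_8$-representations of cone graphs from \thref{prop:SeidelNbh}. Since $\widehat G$ is Hoffman colorable and $G$ is non-empty, \thref{thm:conegraphs} says that $G$ is regular and Hoffman colorable, that its color classes have size $\ymin(G)^2$, and that $\ymin(G)=\ymin(\widehat G)$ is an integer. As $\widehat G$ is exceptional, $\ymin(\widehat G)\ge -2$. A connected graph with an integral smallest eigenvalue at least $-1$ is complete, and then $\widehat G$ would be complete, contradicting that it is exceptional. So $\ymin(G)=-2$ and every color class has size $4$. By \thref{prop:constantequitable}, each color class of this Hoffman coloring of the regular graph $G$ is then a Hoffman coclique. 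Hence the ratio bound of $G$, namely $n/h(G)$, equals $4$.

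It remains to identify $G$. Because $\widehat G$ is $E_8$-representable, \thref{prop:SeidelNbh} gives $G\in\mathcal S_8$, so $G$ is regular with $\ymin(G)=-2$ and has at most $28$ vertices. We split according to whether $G$ is connected.

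If $G$ is connected, then $G$ is either exceptional (and we are done) or a generalized line graph. In the latter case \thref{prop:reggl} makes $G$ a cocktail party graph, the line graph of a regular graph, or the line graph of a biregular graph. We must show that only line graphs of regular graphs on eight vertices survive, and this is the main obstacle. The tool is the representation: in the proof of \thref{prop:SeidelNbh}, the cone vertex sits at $e$ and $G$ is represented by vectors $a_{ij}$ and $b_{ij}$ only, i.e. $G\cong L(H)_F$ with $|V(H)|=8$. I would argue as follows.
\begin{itemize}
\item A cocktail party graph $CP(m)$ has $\ymin=-2$, but its Hoffman colorings use classes of size $2$, not $4$. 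It is therefore excluded.
\item If $G=L(H')$ for $H'$ regular or biregular, I would use that $\widehat G$ is not a generalized line graph. If $H'$ has at most eight vertices, then $\widehat G$ is represented by $e$ together with the vectors $a_{ij}$ ($F=\emptyset$); in that case $G$ is the line graph of a regular graph on eight vertices, and the biregular case must be ruled out separately. Color classes of size $4$ in $L(H')$ are matchings of size $4$, and each vertex must meet every other class in exactly $2$ vertices. This rules out biregular $H'$ with unequal degrees: every matching would have to cover the smaller side, which forces the sides to have equal size.
\item If $H'$ has more than eight vertices, a direct (possibly computer-assisted) check of the finitely many regular line graphs in $\mathcal S_8$ with at most $28$ vertices is the fallback.
\end{itemize}

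If $G$ is disconnected, its components all have the same valency and a common largest eigenvalue. I would then show that $\widehat G$ fails to be exceptional, or else reduce to the connected components and repeat the analysis above. The expected answer is that $G$ is a line graph of a disconnected regular graph on eight vertices, for example $L(2K_4)$. This is consistent with the statement, which does not require $G$ to be connected.

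In every case $G$ is Hoffman colorable with ratio bound $4$, which completes the lemma.
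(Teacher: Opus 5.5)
Your derivation of $\lambda_{\min}(G)=-2$, color classes of size $4$, and ratio bound $4$ matches the paper. One small fix there: the step ``$\lambda_{\min}=-1$ forces completeness'' should be applied to the connected graph $\widehat G$, using $\lambda_{\min}(\widehat G)=\lambda_{\min}(G)$, because connectivity of $G$ is not yet known.

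The identification of $G$ has two genuine gaps. Both are closed by a fact you state but never use: by \thref{prop:constantequitable}, every vertex has exactly two neighbors in each color class other than its own.

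\textbf{The disconnected case.} You give no argument here, and the route you suggest would fail. Reducing to components loses the property that color classes have size $4$, which is exactly what you used to exclude cocktail party graphs. Your predicted example $L(2K_4)\cong 2\cdot CP(3)$ does not even satisfy the hypothesis, since its cone is the generalized line graph $L(K_2;3,3)$. The correct argument runs as follows.
\begin{itemize}
\item A vertex of $V_i$ has two neighbors in every $V_j$ with $j\ne i$, all in its own component. So every component meets every color class in at least two vertices.
\item Since classes have size $4$, there are at most two components.
\item If there are two, each vertex is adjacent to both vertices of every other class inside its component. Hence $G\cong 2\cdot CP(m)$ with $m=\chi(G)$.
\item Then $\widehat G\cong L(K_2;m,m)$ is a generalized line graph, contradicting exceptionality.
\end{itemize}
So $G$ is connected.

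\textbf{The case $G=L(H')$.} Several steps here do not hold up.
\begin{itemize}
\item You leave $|V(H')|>8$ to an unexecuted computer search.
\item The $E_8$-representation remark only covers $|V(H')|\le 8$ and does not yield exactly eight vertices.
\item ``Every matching would have to cover the smaller side'' is not a valid reason in the biregular case.
\end{itemize}
What is needed is this. A color class $C$ is a matching $M$ of four edges of $H'$. Every edge $f\notin M$ is adjacent in $L(H')$ to exactly two members of $C$. The edges of $M$ are pairwise disjoint and $f$ has only two endpoints, so both endpoints of $f$ are covered by $M$. Hence every edge of $H'$ lies on the eight vertices covered by $M$. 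This gives both remaining conclusions.
\begin{itemize}
\item A regular $H'$ has exactly eight vertices, with no search needed.
\item A biregular $H'$ has four vertices on each side, so $n_1k_1=n_2k_2$ gives $k_1=k_2$ and $H'$ is regular.
\end{itemize}
This is the paper's argument, and your detour through $\mathcal S_8$ is not needed for it.
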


\begin{proof}
    From \thref{thm:conegraphs}, it follows that $G$ is a regular Hoffman colorable graph and that $\lambda_{\min}(G)$ is an integer, and so in particular $\y_{\min}(G)\in \{-1,-2\}$. If $\y_{\min}(G)=-1$, then $G$ and $\widehat G$ are complete graphs (not exceptional), so we can assume that $\lambda_{\min}(G)=-2$. Now $G$ is Hoffman colorable with ratio bound 4 by \thref{thm:conegraphs}, and furthermore every vertex is adjacent to two vertices of every other color class of $G$ than its own (by \thref{prop:constantequitable}).
    
    We claim that $G$ is connected. If not, then since every vertex is adjacent to two vertices of every color class other than its own, $G$ must be the disjoint union of two equal-sized cocktail party graphs. Then the cone graph over $G$ is a chromatically balanced generalized line graph of $K_2$, which contradicts the assumption that the cone graph over $G$ is exceptional. So we can indeed assume that $G$ is connected.

    Now if $G$ is not exceptional, then it is a generalized line graph. Recall from \thref{prop:reggl} that we then know that $G$ is a cocktail party graph, the line graph of a regular graph, or the line graph of biregular graph. The first case is impossible because cocktail party graphs do not have cocliques of size 4. Hence, $G$ is the line graph of a graph $H$ that is regular or biregular. Let $C$ be a Hoffman color class of $G$, then $C$ is 2-regular coclique of size 4. Note that $C$ corresponds to a matching $M$ (that is, a set of pairwise disjoint edges) of size 4 in $H$, and by 2-regularity, every other edge of $H$ has both endpoints in $\bigsqcup_{e\in M} e$. Now $H$ has eight vertices. If $H$ is biregular, then the existence of $M$ shows that the bipartition classes of $H$ are both of size 4, and that $H$ is regular.
\end{proof}

We have the following lemma, which combines \thref{prop:constantequitable,thm:regexcep}.

\begin{lem}\thlabel{lem:regexcep1}
    Every Hoffman colorable regular exceptional graph $G$  satisfies one of the following:
    \begin{itemize}
        \item $G$ has ratio bound 4 and is an induced subgraph of at least one of the three Chang graphs (defined in \thref{exa:Chang}),
        \item $G$ has ratio bound 3 and is an induced subgraph of the Schläfli graph (defined in \thref{exa:Schlaefli}).
    \end{itemize}
\end{lem}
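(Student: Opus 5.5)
Combine \thref{thm:regexcep} with \thref{prop:constantequitable}, using the ratio bound as a divisibility obstruction. Let $G$ be a Hoffman colorable regular exceptional graph on $n$ vertices. By \thref{thm:regexcep}, $\y_{\min}(G)=-2$ and $G$ lies in one of the three layers, with ratio bound $4$, $3$ or $8/3$. For a regular graph the Hoffman bound $h(G)$ equals the ratio bound of \thref{thm:ratiobound}. Hoffman colorability forces $\chi(G)=h(G)$, so $h(G)$ must be an integer. This rules out the third layer.

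Consider first the layer with ratio bound $3$. By \thref{thm:regexcep}, all $21$ graphs in this layer are induced subgraphs of the Schläfli graph, which gives the second alternative directly.

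Now consider the layer with ratio bound $4$. By \thref{thm:regexcep}, all but five of these graphs are induced subgraphs of a Chang graph. So it suffices to show that none of the five exceptional graphs on $22$ vertices is Hoffman colorable. By \thref{prop:constantequitable}, a Hoffman coloring of $G$ partitions $V(G)$ into $h(G)=4$ Hoffman cocliques, each of size $n/h(G)=22/4$. This is not an integer, so no such partition exists, and the five graphs on $22$ vertices are not Hoffman colorable. Hence every Hoffman colorable graph in this layer is an induced subgraph of one of the three Chang graphs.

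The only step that needs any care is making sure the counting argument is stated correctly. All Hoffman color classes of a regular graph have the same size $n/h(G)$ by \thref{prop:constantequitable}, so $n$ must be divisible by $h(G)$; the rest is bookkeeping against \thref{thm:regexcep}.
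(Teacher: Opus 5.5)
Your plan is the paper's: combine \thref{prop:constantequitable} with the layer classification of \thref{thm:regexcep} and use integrality and divisibility. However, you misidentify one quantity, and this invalidates the third-layer step as written. In \thref{thm:ratiobound} the ratio bound is $n/h(G)$, the maximum size of a coclique. It is not the Hoffman bound $h(G)$. For a $k$-regular graph with $\lambda_{\min}=-2$ one has $h(G)=(k+2)/2$ and ratio bound $2n/(k+2)$. A third-layer graph therefore has ratio bound $8/3$, but its Hoffman bound is $h(G)=3n/8$, and this can be an integer. For instance, the complement of the Wagner graph is connected, $4$-regular on $8$ vertices, has spectrum $4,\sqrt2^2,0,-\sqrt2^2,-2^2$, and is not a generalized line graph. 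It is therefore a third-layer graph, and $h(G)=3$. So ``$h(G)$ must be an integer'' does not exclude the third layer. What excludes it is \thref{prop:constantequitable}: every color class of a Hoffman coloring of a regular graph is a Hoffman coclique. Its size therefore equals the ratio bound, which must then be an integer, and $8/3$ is not.

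The same swap appears in your $22$-vertex step. A Hoffman coloring there would consist of $h(G)=22/4$ classes of size $4$, not $4$ classes of size $22/4$. The contradiction survives only because $h(G)=\chi(G)$ must be an integer, that is, the ratio bound must divide $n$. Your closing principle (``all classes have size $n/h(G)$'') is the correct one for the third layer. On its own, however, it does not rule out the $22$-vertex graphs, since there $n/h(G)=4$. You need both conditions, each tied to the correct quantity. The ratio bound is an integer because it is the class size, and it divides $n$ because $n$ divided by it is the number of classes. With that correction your argument becomes exactly the paper's one-line proof.
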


\begin{proof}
    By \thref{prop:constantequitable}, the ratio bound of $G$ is an integer that divides the number of vertices of $G$. The result now follows from \thref{thm:regexcep}; the above excludes that $G$ is a graph of the third layer and that $G$ is a graph of the first layer with 22 vertices. 
\end{proof}

To prove \thref{thm:regexcepcone}, we use the following definition. If $G$ is an arbitrary regular graph, then the \emph{Hoffman coclique graph} $HC(G)$ is the (possibly empty) intersection graph of the Hoffman cocliques of $G$: the vertices of $HC(G)$ are the Hoffman cocliques of $G$, and two Hoffman cocliques are adjacent in $HC(G)$ if they intersect. We have the following lemma, showing the relevance of Hoffman coclique graphs.

\begin{lem}\thlabel{lem:regularinduced}
    Let $G$ be a non-empty regular graph. Then the cocliques of size at least two in $HC(G)$ correspond bijectively to the Hoffman colorings of all non-empty induced subgraphs $H$ of $G$ that have the same ratio bound and smallest eigenvalue as $G$. Furthermore, every non-empty regular Hoffman colorable induced subgraph $H$ of $G$ with equal ratio bound and smallest eigenvalue is a chromatic component of a regular induced subgraph of $G$ coming from a maximal coclique in $HC(G)$ in this correspondence.
\end{lem}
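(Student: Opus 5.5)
The plan is to build the correspondence explicitly and then use the ratio bound to control regularity. Let $G$ be $k$-regular on $n$ vertices with $\lambda\coloneqq\ymin(G)$, and let $r=h(G)=1-k/\lambda$. A coclique $\{C_1,\dots,C_m\}$ of $HC(G)$ with $m\ge 2$ is a family of pairwise disjoint Hoffman cocliques of $G$. We send it to the induced subgraph $H=G[C_1\sqcup\dots\sqcup C_m]$ together with the coloring $\{C_i\}$.

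\textbf{Step 1: the image is a Hoffman coloring.} By \thref{thm:ratiobound}, each $C_i$ is $(-\lambda)$-regular and has size $n/r$. Hence every vertex of $C_j$ has exactly $-\lambda$ neighbours in each $C_i$ with $i\ne j$. So $H$ is $(m-1)(-\lambda)$-regular, and the partition is equitable with quotient matrix $-\lambda(J-I)$. This gives the eigenvalue $\lambda$ of $H$, via the lift of any vector in the quotient summing to zero. Since $H$ is an induced subgraph, \thref{prop:subgraph} gives $\ymin(H)\ge\lambda$, so $\ymin(H)=\lambda$. Then $h(H)=1+(m-1)(-\lambda)/(-\lambda)=m$, so the coloring attains the Hoffman bound. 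Also $\chi(H)\ge h(H)=m$, so the coloring is optimal, hence a Hoffman coloring. Finally, the ratio bound of $H$ is $|V(H)|/h(H)=n/r$, matching that of $G$.

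\textbf{Step 2: the converse.} Let $H$ be a non-empty induced subgraph with $\ymin(H)=\lambda$ and ratio bound equal to that of $G$. I read ``ratio bound'' for possibly irregular $H$ as $|V(H)|/h(H)$; this point needs care, but the statement is intended for the regular case. Take a Hoffman coloring $\{C_1,\dots,C_m\}$ of $H$; non-emptiness of $H$ gives $m\ge2$. The aim is to show each $C_i$ is a Hoffman coclique of $G$, i.e.\ $|C_i|=n/r$.

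The route I would try is an interlacing/Rayleigh-quotient argument on the vector $x$ that equals $1$ on $C_i$ and $\alpha$ elsewhere in $G$, in the style of the ratio bound proof, using $\ymin(H)=\lambda$ and the Decomposition Theorem (\thref{thm:Decomp}). The Decomposition Theorem shows that each dichromatic component $H[C_i\sqcup C_j]$ is bipartite with $\ymax=-\lambda$. Together with the ratio-bound equality, this should force $H$ to be regular with equal color classes, so $|C_i|=|V(H)|/m=n/r$. Once $|C_i|=n/r$, \thref{thm:ratiobound} in $G$ makes each $C_i$ a Hoffman coclique of $G$. Distinct classes are disjoint, so they form a coclique in $HC(G)$. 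The two maps are clearly inverse to each other.

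\textbf{Main obstacle.} The hard part is precisely this step: that a Hoffman color class of an arbitrary such $H$ has size $n/r$. If the general argument fails, I would at least handle the case needed for the second claim, where $H$ is regular. Then \thref{prop:constantequitable} shows that the classes of $H$ are Hoffman cocliques of $H$ of size $|V(H)|/h(H)$, which equals $n/r$ by the ratio-bound hypothesis. They are therefore Hoffman cocliques of $G$ by \thref{thm:ratiobound}.

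\textbf{The ``furthermore'' part.} Let $H$ be regular and Hoffman colorable with the same ratio bound and smallest eigenvalue. By Step 2, its Hoffman coloring is a coclique $S$ of $HC(G)$; extend $S$ to a maximal coclique $S'$. The graph $H'$ associated to $S'$ is regular by Step 1. The coloring given by $S'$ is a Hoffman (hence optimal) coloring of $H'$ whose restriction to the classes in $S$ induces $H$. So $H$ is a chromatic component of $H'$.
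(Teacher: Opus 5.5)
Your proof is correct and follows the paper's. Step~1 is the paper's forward direction: you obtain $\ymin(H)=\lambda$ by lifting eigenvectors of the quotient matrix $-\lambda(J-I)$, whereas the paper sandwiches $m\le h(H)\le\chi(H)\le m$. Your regular-case fallback, together with the extension to a maximal coclique of $HC(G)$, is exactly the paper's converse and ``furthermore'' argument; the paper likewise treats only regular $H$ there, since ``ratio bound'' is defined only for regular graphs.

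Your speculative Rayleigh-quotient route for irregular $H$ is also unnecessary under your reading of the ratio bound as $|V(H)|/h(H)$. Each $C_i$ is a coclique of $G$, so $|C_i|\le n/r$ by the ratio bound in $G$. The average class size is $|V(H)|/m=|V(H)|/h(H)=n/r$, which forces $|C_i|=n/r$ for every $i$.
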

\begin{proof}
    Write $\nu=-\y_{\min}(G)$, and write $r$ for the ratio bound of $G$. Then if $C$ is a coclique of size $r$ in $G$, every vertex outside of $C$ is adjacent to exactly $\nu$ vertices of $C$.
    
    Let $\{C_1,\dots,C_m\}$ with $m\ge 2$ be a coclique in $HC(G)$. Then $C_1,\dots,C_m$ are pairwise disjoint Hoffman cocliques in $G$. Let $H$ be the induced subgraph of $G$ on $\bigsqcup_{i=1}^m C_i$. Since $C_i$ is a Hoffman coclique in $G$ for every $i$, every vertex outside of $C_i$ has precisely $\nu$ vertices of $C_i$. Now $H$ is a regular graph; every vertex is adjacent to $(m-1)\nu$ other vertices ($\nu$ vertices in every color class other than its own color class). Consequently, $\y_{\max}(H)=(m-1)\nu$. For the smallest eigenvalue, note that by \thref{prop:subgraph} we know $\y_{\min}(H)\ge -\nu$. We now have
    $$h(H) = 1-\frac{\y_{\max}(H)}{\y_{\min}(H)}\ge 1-\frac{(m-1)\nu}{-\nu}=m.$$
    In particular, $\{C_1,\dots,C_m\}$ is a Hoffman coloring of $H$, $\ymin(H)=-\nu$, and by \thref{prop:constantequitable} the ratio bound of $H$ is equal to $|C_1|$, which is equal to the ratio bound of $G$.

    Conversely, let $H$ be a non-empty regular induced subgraph of $G$ with $\y_{\min}(H)=-\nu$ and ratio bound $r$, and suppose that $\{C_1,\dots,C_m\}$ is a Hoffman coloring of $H$ with $m\ge 2$. Apply \thref{prop:constantequitable}. Then each $C_i$ is not only a Hoffman coclique in $H$, but also in $G$. Furthermore, since $\{C_1,\dots,C_m\}$ is a coloring of $H$, the sets $C_i$ are pairwise disjoint. We conclude that $\{C_1,\dots,C_m\}$ is a coclique in $HC(G)$. Furthermore, let $\{C_1,\dots,C_m,\dots,C_M\}$ be a maximal coclique in $HC(G)$ containing $\{C_1,\dots,C_m\}$. Then $H$ is a chromatic component of the induced subgraph of $G$ on $\bigsqcup_{i=1}^M C_i$, which concludes the proof.
\end{proof}

\begin{proof}[Proof of \thref{thm:regexcepcone}]
Note that the Chang graphs have ratio bound 4 (and also $L(K_8)$), and that the Schläfli graph has ratio bound 3. Combining \thref{thm:conegraphs,lem:excepcone,lem:regexcep1,lem:regularinduced}, we see that for the purposes of proving \thref{thm:regexcepcone} it suffices to determine the (maximal) cocliques in the Hoffman coclique graphs of the Schläfli graph, the Chang graphs, and $L(K_8)$. We have to consider $L(K_8)$ as well, because even though its induced subgraphs are not exceptional (as they are also line graphs), the cones over their induced subgraphs (which also have smallest eigenvalue $-2$ by \thref{thm:conegraphs}) might be exceptional. Determining all (maximal) cocliques in a graph is a standard procedure for computer algebra systems. To run the computations, we use SageMath 10.6 \cite{sage}. We use Algorithm \ref{algo:reg}. After applying it, we can check if a resulting graph is a generalized line graph by checking whether it contains any of 31 forbidden induced subgraphs (see \cite[Theorem 2.3.18]{-2}).

\begin{algorithm}[ht]
    \KwIn{a non-empty regular graph $G$;}
    \KwOut{a list \emph{HoffmanColorable} of (maximal) regular non-trivially Hoffman colorable induced subgraphs of $G$ with the same ratio bound and smallest eigenvalue as $G$;}
    \nlset{1}form the Hoffman coclique graph $HC(G)$ by determining all Hoffman cocliques of $G$ (the vertices of $HC(G)$) and determining the pairs of disjoint Hoffman cocliques (the edges of $HC(G)$)\;
    \nlset{2}initialize \emph{HoffmanColorable} to be an empty list\;
    \nlset{3}\For{every (maximal) coclique \emph{(max)coc} in $HC(G)$}{
    \nlset{4}consider the induced subgraph $H$ of $G$ on the set of vertices formed by the union of the Hoffman cocliques of $G$ listed in \emph{(max)coc}\;
    \nlset{5}check if $H$ isomorphic to any of the graphs already in \emph{HoffmanColorable}\;
    \nlset{6}if not, then append $H$ to \emph{HoffmanColorable}.}
    \KwRet{the list \emph{HoffmanColorable}.}
    \caption{Algorithm for computing (maximal) regular non-trivially Hoffman colorable induced subgraphs}
    \label{algo:reg}
\end{algorithm}

Applying Algorithm \ref{algo:reg} to the Schläfli graph gives the following results. Exactly 21 non-trivially Hoffman colorable regular graphs with ratio bound 3 are returned, of which 4 are line graphs of regular graphs on six vertices of valency at least 3 (namely $K_3 \square K_2$\footnote{The Cartesian product of $K_3$ or $K_2$, also known as the $2\times 3$-grid graph, the prism graph, or $L(K_{2,3})$.}, $K_{3,3}$, $CP(3)$ and $K_6$), and the remaining 17 are regular Hoffman colorable exceptional graphs. For the induced subgraphs that are maximal with respect to chromatic components, three graphs are returned, namely the Schläfli graph itself ($M_{24}$), the graph represented using the edge-switching diagram in Figure \ref{fig:181} ($M_{20}$), and $L(K_6)$. Since $L(K_6)$ is a line graph and thus only has line graphs as chromatic components, it is not included in the statement of \thref{thm:regexcepcone}.

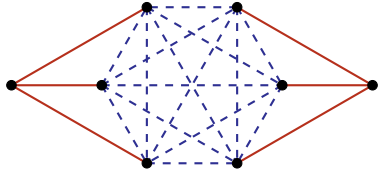
\begin{figure}[ht]
    \begin{center}
        \begin{tikzpicture}[scale=0.6]
            \coordinate (1) at (-4,0);
            \coordinate (2) at (4,0);
            \coordinate (3) at (-2,0);
            \coordinate (4) at (-1,1.73);
            \coordinate (5) at (1,1.73);
            \coordinate (6) at (2,0);
            \coordinate (7) at (1,-1.73);
            \coordinate (8) at (-1,-1.73);

            \draw[Blue,dashed, thick] (3) -- (4);
            \draw[Blue,dashed, thick] (3) -- (5);
            \draw[Blue,dashed, thick] (5) -- (4);
            \draw[Blue,dashed, thick] (6) -- (7);
            \draw[Blue,dashed, thick] (6) -- (8);
            \draw[Blue,dashed, thick] (8) -- (7);
            \draw[Blue,dashed, thick] (3) -- (6);
            \draw[Blue,dashed, thick] (4) -- (6);
            \draw[Blue,dashed, thick] (5) -- (6);
            \draw[Blue,dashed, thick] (3) -- (7);
            \draw[Blue,dashed, thick] (4) -- (7);
            \draw[Blue,dashed, thick] (5) -- (7);
            \draw[Blue,dashed, thick] (3) -- (8);
            \draw[Blue,dashed, thick] (4) -- (8);
            \draw[Blue,dashed, thick] (5) -- (8);
            \draw[BrickRed, thick] (1) -- (3);
            \draw[BrickRed, thick] (1) -- (4);
            \draw[BrickRed, thick] (1) -- (8);
            \draw[BrickRed, thick] (2) -- (5);
            \draw[BrickRed, thick] (2) -- (6);
            \draw[BrickRed, thick] (2) -- (7);

            \filldraw[black] (1) circle (3pt);
            \filldraw[black] (2) circle (3pt);
            \filldraw[black] (3) circle (3pt);
            \filldraw[black] (4) circle (3pt);
            \filldraw[black] (5) circle (3pt);
            \filldraw[black] (6) circle (3pt);
            \filldraw[black] (7) circle (3pt);
            \filldraw[black] (8) circle (3pt);
        \end{tikzpicture}
    \end{center}
    \caption{Edge switching diagram for $M_{20}$.}
    \label{fig:181}
\end{figure}

Applying Algorithm \ref{algo:reg} to $L(K_8)$ and the three Chang graphs results in 87 regular non-trivially Hoffman colorable graphs with ratio bound 4. Of these, 17 are line graphs, namely the line graphs of the 17 regular graphs on eight vertices with valency at least 3 (see \cite[Chapter 3]{atlas}), and the other 70 graphs are regular non-trivially Hoffman colorable exceptional graphs. For the maximal regular non-trivially Hoffman colorable graphs with respect to chromatic components, we get $L(K_8)$ and its three switching mates (namely the Chang graphs), and the line graph of $\ol{C_3 \sqcup C_5}$ and its eight switching mates (see Figure \ref{fig:11-19}). We note that there are two graphs that are maximal in one Chang graph, but not in another Chang graph.

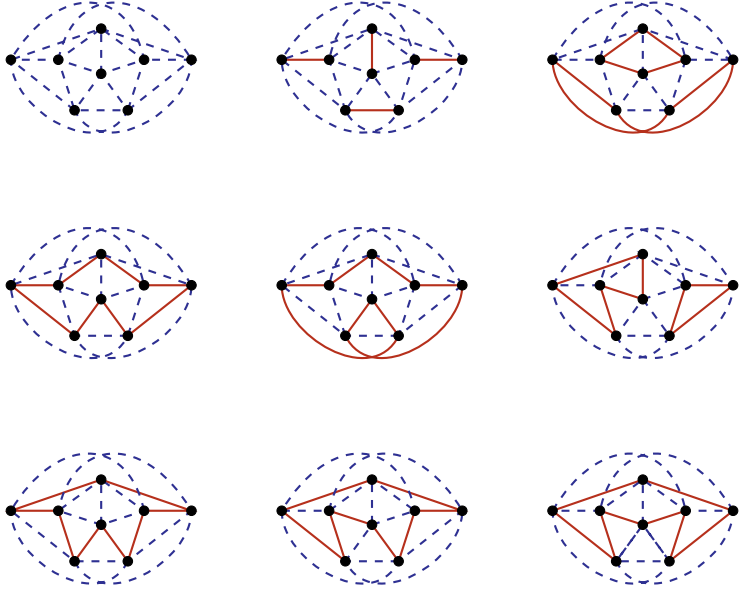
\begin{figure}
    \begin{center}
        \begin{tikzpicture}[scale=0.6]
            \coordinate (1A) at (-6,5);
            \coordinate (1B) at (-8,5.31);
            \coordinate (1C) at (-4,5.31);
            \coordinate (1D) at (-6,6);
            \coordinate (1E) at (-5.05,5.31);
            \coordinate (1F) at (-5.41,4.19);
            \coordinate (1G) at (-6.59,4.19);
            \coordinate (1H) at (-6.95,5.31);
            \coordinate (1onder) at (-6,3);
            \coordinate (1BF) at (-8,4.19);
            \coordinate (1CG) at (-4,4.19);
            \coordinate (1BE) at (-7,7);
            \coordinate (1EB) at (-5.41,7);
            \coordinate (1CH) at (-5,7);
            \coordinate (1HC) at (-6.59,7);

            \coordinate (2A) at (0,5);
            \coordinate (2B) at (-2,5.31);
            \coordinate (2C) at (2,5.31);
            \coordinate (2D) at (0,6);
            \coordinate (2E) at (0.95,5.31);
            \coordinate (2F) at (0.59,4.19);
            \coordinate (2G) at (-0.59,4.19);
            \coordinate (2H) at (-0.95,5.31);
            \coordinate (2onder) at (0,3);
            \coordinate (2BF) at (-2,4.19);
            \coordinate (2CG) at (2,4.19);
            \coordinate (2BE) at (-1,7);
            \coordinate (2EB) at (0.59,7);
            \coordinate (2CH) at (1,7);
            \coordinate (2HC) at (-0.59,7);

            \coordinate (3A) at (6,5);
            \coordinate (3B) at (4,5.31);
            \coordinate (3C) at (8,5.31);
            \coordinate (3D) at (6,6);
            \coordinate (3E) at (6.95,5.31);
            \coordinate (3F) at (6.59,4.19);
            \coordinate (3G) at (5.41,4.19);
            \coordinate (3H) at (5.05,5.31);
            \coordinate (3onder) at (6,3);
            \coordinate (3BF) at (4,4.19);
            \coordinate (3CG) at (8,4.19);
            \coordinate (3BE) at (5,7);
            \coordinate (3EB) at (6.59,7);
            \coordinate (3CH) at (7,7);
            \coordinate (3HC) at (5.41,7);

            \coordinate (4A) at (-6,0);
            \coordinate (4B) at (-8,0.31);
            \coordinate (4C) at (-4,0.31);
            \coordinate (4D) at (-6,1);
            \coordinate (4E) at (-5.05,0.31);
            \coordinate (4F) at (-5.41,-0.81);
            \coordinate (4G) at (-6.59,-0.81);
            \coordinate (4H) at (-6.95,0.31);
            \coordinate (4onder) at (-6,-2);
            \coordinate (4BF) at (-8,-0.81);
            \coordinate (4CG) at (-4,-0.81);
            \coordinate (4BE) at (-7,2);
            \coordinate (4EB) at (-5.41,2);
            \coordinate (4CH) at (-5,2);
            \coordinate (4HC) at (-6.59,2);

            \coordinate (5A) at (0,0);
            \coordinate (5B) at (-2,0.31);
            \coordinate (5C) at (2,0.31);
            \coordinate (5D) at (0,1);
            \coordinate (5E) at (0.95,0.31);
            \coordinate (5F) at (0.59,-0.81);
            \coordinate (5G) at (-0.59,-0.81);
            \coordinate (5H) at (-0.95,0.31);
            \coordinate (5onder) at (0,-2);
            \coordinate (5BF) at (-2,-0.81);
            \coordinate (5CG) at (2,-0.81);
            \coordinate (5BE) at (-1,2);
            \coordinate (5EB) at (0.59,2);
            \coordinate (5CH) at (1,2);
            \coordinate (5HC) at (-0.59,2);

            \coordinate (6A) at (6,0);
            \coordinate (6B) at (4,0.31);
            \coordinate (6C) at (8,0.31);
            \coordinate (6D) at (6,1);
            \coordinate (6E) at (6.95,0.31);
            \coordinate (6F) at (6.59,-0.81);
            \coordinate (6G) at (5.41,-0.81);
            \coordinate (6H) at (5.05,0.31);
            \coordinate (6onder) at (6,-2);
            \coordinate (6BF) at (4,-0.81);
            \coordinate (6CG) at (8,-0.81);
            \coordinate (6BE) at (5,2);
            \coordinate (6EB) at (6.59,2);
            \coordinate (6CH) at (7,2);
            \coordinate (6HC) at (5.41,2);

            \coordinate (7A) at (-6,-5);
            \coordinate (7B) at (-8,-4.69);
            \coordinate (7C) at (-4,-4.69);
            \coordinate (7D) at (-6,-4);
            \coordinate (7E) at (-5.05,-4.69);
            \coordinate (7F) at (-5.41,-5.81);
            \coordinate (7G) at (-6.59,-5.81);
            \coordinate (7H) at (-6.95,-4.69);
            \coordinate (7onder) at (-6,-7);
            \coordinate (7BF) at (-8,-5.81);
            \coordinate (7CG) at (-4,-5.81);
            \coordinate (7BE) at (-7,-3);
            \coordinate (7EB) at (-5.41,-3);
            \coordinate (7CH) at (-5,-3);
            \coordinate (7HC) at (-6.59,-3);

            \coordinate (8A) at (0,-5);
            \coordinate (8B) at (-2,-4.69);
            \coordinate (8C) at (2,-4.69);
            \coordinate (8D) at (0,-4);
            \coordinate (8E) at (0.95,-4.69);
            \coordinate (8F) at (0.59,-5.81);
            \coordinate (8G) at (-0.59,-5.81);
            \coordinate (8H) at (-0.95,-4.69);
            \coordinate (8onder) at (0,-7);
            \coordinate (8BF) at (-2,-5.81);
            \coordinate (8CG) at (2,-5.81);
            \coordinate (8BE) at (-1,-3);
            \coordinate (8EB) at (0.59,-3);
            \coordinate (8CH) at (1,-3);
            \coordinate (8HC) at (-0.59,-3);

            \coordinate (9A) at (6,-5);
            \coordinate (9B) at (4,-4.69);
            \coordinate (9C) at (8,-4.69);
            \coordinate (9D) at (6,-4);
            \coordinate (9E) at (6.95,-4.69);
            \coordinate (9F) at (6.59,-5.81);
            \coordinate (9G) at (5.41,-5.81);
            \coordinate (9H) at (5.05,-4.69);
            \coordinate (9onder) at (6,-7);
            \coordinate (9BF) at (4,-5.81);
            \coordinate (9CG) at (8,-5.81);
            \coordinate (9BE) at (5,-3);
            \coordinate (9EB) at (6.59,-3);
            \coordinate (9CH) at (7,-3);
            \coordinate (9HC) at (5.41,-3);

            \draw[Blue,dashed,thick] (1A) -- (1D) -- (1E) -- (1F) -- (1G) -- (1H) -- (1D) -- (1B) .. controls (1BE) and (1EB) .. (1E) -- (1A) -- (1F) .. controls (1onder) and (1BF) .. (1B) -- (1G) -- (1A) -- (1H) -- (1B) (1D) -- (1C) -- (1E) (1F) -- (1C) .. controls (1CG) and (1onder) .. (1G) (1H) .. controls (1HC) and (1CH) .. (1C);
            \draw[BrickRed,thick] (2A) -- (2D) (2B) -- (2H) (2C) -- (2E) (2F) -- (2G);
            \draw[Blue,dashed,thick] (2A) -- (2E) -- (2F) -- (2C) -- (2D) -- (2H) .. controls (2HC) and (2CH) .. (2C) .. controls (2CG) and (2onder) .. (2G) -- (2A) -- (2F) .. controls (2onder) and (2BF) .. (2B) -- (2D) -- (2E) .. controls (2EB) and (2BE) .. (2B) -- (2G) -- (2H) -- (2A);
            \draw[BrickRed,thick] (3A) -- (3E) -- (3D) -- (3H) -- (3A) (3B) .. controls (3BF) and (3onder) .. (3F) -- (3C) .. controls (3CG) and (3onder) .. (3G) -- (3B);
            \draw[Blue,dashed,thick] (3A) -- (3F) -- (3G) -- (3A) -- (3D) -- (3B) -- (3H) .. controls (3HC) and (3CH) .. (3C) -- (3E) .. controls (3EB) and (3BE) .. (3B) (3C) -- (3D) (3E) -- (3F) (3G) -- (3H);
            \draw[BrickRed,thick] (4A) -- (4F) -- (4C) -- (4E) -- (4D) -- (4H) -- (4B) -- (4G) -- (4A);
            \draw[Blue,dashed,thick] (4A) -- (4D) -- (4C) .. controls (4CG) and (4onder) .. (4G) -- (4H) -- (4A) -- (4E) -- (4F) .. controls (4onder) and (4BF) .. (4B) -- (4D) (4F) -- (4G) (4B) .. controls (4BE) and (4EB) .. (4E) (4C) .. controls (4CH) and (4HC) .. (4H);
            \draw[BrickRed,thick] (5A) -- (5G) .. controls (5onder) and (5CG) .. (5C) -- (5E) -- (5D) -- (5H) -- (5B) .. controls (5BF) and (5onder) .. (5F) -- (5A);
            \draw[Blue,dashed,thick] (5A) -- (5D) -- (5B) -- (5G) -- (5H) -- (5A) -- (5E) -- (5F) -- (5C) -- (5D) (5F) -- (5G) (5B) .. controls (5BE) and (5EB) .. (5E) (5C) .. controls (5CH) and (5HC) .. (5H);
            \draw[BrickRed,thick] (6C) -- (6E) -- (6F) -- (6C) (6A) -- (6D) -- (6B) -- (6G) -- (6H) -- (6A);
            \draw[Blue,dashed,thick] (6A) -- (6E) -- (6D) -- (6C) .. controls (6CH) and (6HC) .. (6H) -- (6B) .. controls (6BE) and (6EB) .. (6E) (6D) -- (6H) (6A) -- (6F) -- (6G) -- (6A) (6F) .. controls (6onder) and (6BF) .. (6B) (6G) .. controls (6onder) and (6CG) .. (6C);
            \draw[BrickRed,thick] (7A) -- (7F) -- (7E) -- (7C) -- (7D) -- (7B) -- (7H) -- (7G) -- (7A);
            \draw[Blue,dashed,thick] (7A) -- (7D) -- (7E) -- (7A) -- (7H) -- (7D) (7E) .. controls (7EB) and (7BE) .. (7B) -- (7G) -- (7F) -- (7C) .. controls (7CH) and (7HC) .. (7H) (7F) .. controls (7onder) and (7BF) .. (7B) (7G) .. controls (7onder) and (7CG) .. (7C);
            \draw[BrickRed,thick] (8A) -- (8F) -- (8E) -- (8C) -- (8D) -- (8B) -- (8G) -- (8H) -- (8A);
            \draw[Blue,dashed,thick] (8A) -- (8E) -- (8D) -- (8H) -- (8B) .. controls (8BE) and (8EB) .. (8E) (8H) .. controls (8HC) and (8CH) .. (8C) .. controls (8CG) and (8onder) .. (8G) -- (8A) -- (8D) (8B) .. controls (8BF) and (8onder) .. (8F) -- (8C) (8F) -- (8G);
            \draw[BrickRed,thick] (9A) -- (9E) -- (9F) -- (9C) -- (9D) -- (9B) -- (9G) -- (9H) -- (9A);
            \draw[Blue,dashed,thick] (9A) -- (9D) -- (9E) -- (9C) .. controls (9CG) and (9onder) .. (9G) -- (9A) -- (9F) .. controls (9onder) and (9BF) .. (9B) -- (9H) -- (9D) (9F) -- (9A) -- (9G) -- (9F) (9B) .. controls (9BE) and (9EB) .. (9E) (9C) .. controls (9CH) and (9HC) .. (9H);
            
            \filldraw[black] (1A) circle (3pt) (1B) circle (3pt) (1C) circle (3pt) (1D) circle (3pt) (1E) circle (3pt) (1F) circle (3pt) (1G) circle (3pt) (1H) circle (3pt);
            \filldraw[black] (2A) circle (3pt) (2B) circle (3pt) (2C) circle (3pt) (2D) circle (3pt) (2E) circle (3pt) (2F) circle (3pt) (2G) circle (3pt) (2H) circle (3pt);
            \filldraw[black] (3A) circle (3pt) (3B) circle (3pt) (3C) circle (3pt) (3D) circle (3pt) (3E) circle (3pt) (3F) circle (3pt) (3G) circle (3pt) (3H) circle (3pt);
            \filldraw[black] (4A) circle (3pt) (4B) circle (3pt) (4C) circle (3pt) (4D) circle (3pt) (4E) circle (3pt) (4F) circle (3pt) (4G) circle (3pt) (4H) circle (3pt);
            \filldraw[black] (5A) circle (3pt) (5B) circle (3pt) (5C) circle (3pt) (5D) circle (3pt) (5E) circle (3pt) (5F) circle (3pt) (5G) circle (3pt) (5H) circle (3pt);
            \filldraw[black] (6A) circle (3pt) (6B) circle (3pt) (6C) circle (3pt) (6D) circle (3pt) (6E) circle (3pt) (6F) circle (3pt) (6G) circle (3pt) (6H) circle (3pt);
            \filldraw[black] (7A) circle (3pt) (7B) circle (3pt) (7C) circle (3pt) (7D) circle (3pt) (7E) circle (3pt) (7F) circle (3pt) (7G) circle (3pt) (7H) circle (3pt);
            \filldraw[black] (8A) circle (3pt) (8B) circle (3pt) (8C) circle (3pt) (8D) circle (3pt) (8E) circle (3pt) (8F) circle (3pt) (8G) circle (3pt) (8H) circle (3pt);
            \filldraw[black] (9A) circle (3pt) (9B) circle (3pt) (9C) circle (3pt) (9D) circle (3pt) (9E) circle (3pt) (9F) circle (3pt) (9G) circle (3pt) (9H) circle (3pt);
        \end{tikzpicture}
    \end{center}
    \caption{Edge switching diagrams for $L(\ol {C_3 \sqcup C_5})$ and its eight switching-mates; the nine graphs over which the cones are $M_{11},\dots,M_{19}$. The eight switching mates are regular exceptional graphs with identifiers 113--120 from \cite[Chapter 4]{-2}.}
    \label{fig:11-19}
\end{figure}

The cones over these 87 regular non-trivially Hoffman colorable graphs with ratio bound 4 are also non-trivially Hoffman colorable and $E_8$-representable by \thref{thm:conegraphs,thm:regexcep,prop:SeidelNbh}. Since the cone vertex constitutes a new color class, we also consider the cones over the bipartite graphs corresponding to 2-cocliques in the Hoffman coclique graphs of $L(K_8)$ and the three Chang graphs: the cones over $C_8$ and $2 \cdot C_4$. This makes 89 non-trivially Hoffman colorable cone graphs. Of these, two are generalized line graphs, namely the cone over $2\cdot C_4$ (namely $L(K_2,3)$) and the cone over $2\cdot CP(3)$ (namely $L(K_2,4)$). The remaining 87 graphs are the non-trivially Hoffman colorable exceptional cone graphs. The maximal ones among these cone graphs are the cones over the maximal ones among the regular Hoffman colorable graphs with ratio bound 4. The cones over $L(K_8)$ and the three Chang graphs are $M_{26}$, $\dots$, $M_{29}$, and the cones over the nine graphs switching-equivalent to $L(\ol{C_3 \sqcup C_5})$ are $M_{11}$, $\dots$, $M_{19}$.
\end{proof}

Since there are exactly 21 regular exceptional graphs of the second layer (see \thref{thm:regexcep}), from \thref{thm:regexcepcone} we know that exactly 4 of them are not Hoffman colorable. Since these 21 graphs are identifiable by their spectrum (see \cite[Table A3.3]{-2}), these 4 graphs are easily found; they are the graphs with identifiers 165, 168, 169, and 175.

Similarly, among the 81 regular exceptional graphs of the first layer with an order divisible by 4 \cite[Table A3]{-2}, only 11 are not Hoffman colorable by \thref{thm:regexcepcone}. Using the $E_8$-representations of the regular exceptional graphs of the first layer from \cite[Table A3]{-2}, it transpires that they are the graphs with identifiers 7, 8, 10, 36, 41, 42, 53, 55, 56, 65, and 66. The remaining 70 regular exceptional graphs of the first layer with an order divisible by 4 are Hoffman colorable.

\subsubsection{Regular Hoffman colorable graphs of the second layer revisited}

In this section we revisit the regular Hoffman colorable graphs of the second layer. This has two important reasons. For one, the results in this section offer a different approach to prove \thref{thm:regexcepcone} for the case of regular exceptional graphs of the second layer. Secondly, the results given here are stronger, and they are needed for the proofs in Section~\ref{sec:typea}. In particular, we have to consider not only regular exceptional graphs of the second layer, but also some related regular generalized line graphs; define $\mathcal G_3$ as the set of regular graphs with $\ymin=-2$ and ratio bound 3.

The approach in this section can be described as being based on repeated deletion of cocliques of size 3 (which are Hoffman cocliques in graphs with ratio bound 3). Note that the set $\mathcal G_3$ is not closed under the operation of removing a coclique of size 3; for example removing a coclique of size 3 from the 6-cycle leads to a graph with smallest eigenvalue larger than $-2$. Define the set $\mathcal G'_3$ as the set of $k$-regular graphs on $n$ vertices with $\ymin\ge -2$ and $2n=3(k+2)$. If $G$ is a regular graph with $\ymin=-2$, then $2n=3(k+2)$ is equivalent to $G$ having ratio bound 3. Hence $G\in \mathcal G_3$ if and only if $G\in \mathcal G'_3$ and $\ymin=-2$.

Unlike $\mathcal G_3$, the larger set $\mathcal G'_3$ is closed under removal of a 3-coclique, as the following result among other things shows.

\begin{lem}\thlabel{lem:G3 coclique}
    Let $G\in \mathcal G'_3$, and let $C$ be a coclique of size 3 in $G$. If $G$ is not the empty graph of order 3, then $G\in \mathcal G_3$, $C$ is a Hoffman coclique, and $G\setminus C\in \mathcal G'_3$.
\end{lem}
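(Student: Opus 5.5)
The plan is to squeeze the ratio bound (\thref{thm:ratiobound}) between the assumption $2n=3(k+2)$ and the existence of the $3$-coclique $C$. Let $G\in\mathcal G'_3$ be $k$-regular on $n$ vertices, and suppose $G$ is not the empty graph of order $3$.

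\emph{Step 1: $G$ is non-empty.} If $k=0$, then $2n=6$ gives $n=3$, which is exactly the excluded case. So $k\ge 1$, $G$ is non-empty, and in particular $\ymin(G)<0$. Hence \thref{thm:ratiobound} applies.

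\emph{Step 2: force $\ymin(G)=-2$ and make $C$ a Hoffman coclique.} Write $\mu=-\ymin(G)$, so that $0<\mu\le 2$. The map $\mu\mapsto n\mu/(k+\mu)$ is strictly increasing in $\mu>0$. The ratio bound applied to $C$ therefore gives
\[
3=|C|\le \frac{n\mu}{k+\mu}\le \frac{2n}{k+2}=3,
\]
where the last equality is the hypothesis $2n=3(k+2)$. Both inequalities must be equalities. Strict monotonicity then forces $\mu=2$, so $\ymin(G)=-2$ and $G\in\mathcal G_3$. The equality case of \thref{thm:ratiobound} says that $C$ is a Hoffman coclique and is $2$-regular: every vertex outside $C$ has exactly two neighbours in $C$.

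\emph{Step 3: $G\setminus C\in\mathcal G'_3$.} By the $2$-regularity of $C$, every vertex of $G\setminus C$ loses exactly two neighbours. So $G\setminus C$ is $(k-2)$-regular on $n-3$ vertices. We check the defining identity:
\[
2(n-3)=3(k+2)-6=3\bigl((k-2)+2\bigr).
\]
Finally, $\ymin(G\setminus C)\ge\ymin(G)=-2$, because $G\setminus C$ is an induced subgraph of $G$ (\thref{prop:subgraph}).

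There are degenerate cases to check. If $k=2$, then $G\setminus C$ is the empty graph on $3$ vertices; this is still in $\mathcal G'_3$, since its smallest eigenvalue is $0$. Also $n-3>0$, because $k\ge1$ gives $n\ge 5$.

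No step here is a real obstacle. The only delicate point is Step 2: the ratio-bound inequality must be used with monotonicity in $\mu$, so that equality pins down $\ymin=-2$ rather than merely $\ymin\ge-2$.
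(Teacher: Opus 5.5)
Your proposal is correct and matches the paper's proof. You first rule out the edgeless case via $k=0\Rightarrow n=3$. You then apply the ratio bound to $C$ together with $2n=3(k+2)$, which forces $\lambda_{\min}(G)=-2$ and makes $C$ a $2$-regular Hoffman coclique. Finally you observe that $G\setminus C$ is $(k-2)$-regular on $n-3$ vertices with $\lambda_{\min}\ge -2$, since it is an induced subgraph.

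Your monotonicity-in-$\mu$ formulation is just a repackaging of the paper's chain $3h(G)\le n \iff h(G)\le 1+k/2 \iff \lambda_{\min}(G)\le -2$.
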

\begin{proof}
    Let $G\in \mathcal G'_3$, and let $C$ be a coclique of size 3 in $G$. If $G$ has no edges, then $k=0$ and so by $2n=3(k+2)$, we know that $C=V(G)$. Now $G\cong 3\cdot K_1$.

    If instead $G$ is non-empty, then we can apply the ratio bound to $C$: we know that $3h(G)\le n$, which by $2n=3(k+2)$ is equivalent to $h(G)\le 1 +k/2$, which in turn is equivalent to $\ymin\le -2$. We can conclude that $\ymin=-2$ (so $G\in \mathcal G_3$) and that $C$ is a Hoffman coclique. Now $C$ is 2-regular. Then $G\setminus C$ is a $(k-2)$-regular graph on $n-3$ vertices. It now follows from $2n=3(k+2)$ and \thref{prop:subgraph} that $G\setminus C \in \mathcal G'_3$.
\end{proof}

We determine the sets $\mathcal G_3$ and $\mathcal G'_3$.

\begin{lem}\thlabel{lem:G3}
    The set $\mathcal G_3$ consists of the 21 regular exceptional graphs of the second layer, and the line graphs of the following six graphs: $K_6$, $CP(3)$, $K_{3,3}$, $K_3\square K_2$, $C_6$, and $K_{2,6}$. The set $\mathcal G'_3 \setminus \mathcal G_3$ consists of the two graphs $3\cdot K_1$ and $2\cdot K_3$.
\end{lem}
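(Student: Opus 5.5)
Since $\mathcal G_3$ consists exactly of the members of $\mathcal G'_3$ with $\ymin=-2$, the plan is to determine $\mathcal G'_3$ completely. The two statements then follow together.

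First I would dispose of the disconnected case. A member of $\mathcal G'_3$ is $k$-regular on $n=3(k+2)/2$ vertices. Suppose $G$ is disconnected. Each component is $k$-regular, so it has at least $k+1$ vertices. Two components therefore give $n\ge 2(k+1)$, and combined with $n=3(k+2)/2$ this forces $k\le 2$. Checking the cases by hand:
\begin{itemize}
\item $k=0$ gives $n=3$, so $G=3\cdot K_1$;
\item $k=1$ is impossible, since $n$ would not be an integer;
\item $k=2$ gives $n=6$, a union of cycles with two components, so $G=2\cdot K_3$.
\end{itemize}
Both graphs have $\ymin>-2$, so they lie in $\mathcal G'_3\setminus\mathcal G_3$. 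Every other member of $\mathcal G'_3$ is connected and non-empty.

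Next I would treat the connected case using the Cameron--Goethals--Seidel--Shult Theorem (\thref{thm:CGSS}) together with \thref{prop:reggl} and \thref{thm:regexcep}. A connected $G\in\mathcal G'_3$ is either exceptional or a regular generalized line graph.
\begin{itemize}
\item \emph{Exceptional.} $G$ is a regular exceptional graph. Exceptional graphs with $\ymin>-2$ have at most eight vertices and are the ones listed in \cite[Table A2]{-2}; one checks that none of these is regular with $2n=3(k+2)$. So $\ymin=-2$, and $G$ has ratio bound $n/h=3$. By \thref{thm:regexcep}, these are exactly the 21 graphs of the second layer.
\item \emph{Generalized line graph.} By \thref{prop:reggl}, $G$ is a cocktail party graph, the line graph of a regular graph, or the line graph of a biregular graph.
\end{itemize}

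The generalized line graph case is a direct enumeration.
\begin{itemize}
\item \emph{Cocktail party.} $CP(m)$ has $n=2m$ and $k=2m-2$, and $4m=6m$ is impossible.
\item \emph{Regular line graph.} Let $H$ be $r$-regular on $p$ vertices. Then $L(H)$ has $n=pr/2$ and $k=2r-2$, so the condition reads $pr=6r$, i.e.\ $p=6$. The connected regular graphs on six vertices with $r\ge 2$ are $C_6$, $K_3\square K_2$, $K_{3,3}$, $CP(3)$, $K_6$, together with the 3-regular complement of $C_6$. That complement is isomorphic to $K_3\square K_2$, so no new graph arises. (For $r=1$, $L(H)$ is empty and does not satisfy the equation.)
\item \emph{Biregular line graph.} Take classes of sizes $n_1,n_2$ with degrees $k_1\ne k_2$. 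Then $n=n_1k_1=n_2k_2$ and $k=k_1+k_2-2$, so the condition is $2n_1k_1=3(k_1+k_2)$. Since $n_1\ge k_2$ and $n_2=n_1k_1/k_2\ge k_1$, a short Diophantine analysis should give only $\{k_1,k_2\}=\{1,2\}$ and $\{2,6\}$.
  \begin{itemize}
  \item The pair $\{1,2\}$ is ruled out: it would mean $H=K_{1,2}$, giving $L(H)=K_2$, or a disconnected $H$.
  \item The pair $\{2,6\}$ gives $H=K_{2,6}$.
  \end{itemize}
\end{itemize}

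For every line graph found this way I would confirm $\ymin=-2$ directly. This holds because each $H$ contains an even cycle, or has more edges than vertices and so is not a tree or odd-unicyclic. This places all six of these line graphs in $\mathcal G_3$.

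The main obstacle is the biregular Diophantine analysis, together with making sure no regular graph on six vertices is missed. I would handle the former by writing $k_2=(2n_1-3)k_1/3$ and bounding $n_1$ using $n_1\ge k_2$. An alternative route to the whole classification would be to iterate \thref{lem:G3 coclique}, peeling off 3-cocliques down to $3\cdot K_1$ and rebuilding; I would keep that as a cross-check rather than the main argument.
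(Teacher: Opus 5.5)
Your route is the paper's own. You treat the disconnected case via $n\ge 2(k+1)$, then use the Cameron--Goethals--Seidel--Shult dichotomy: exceptional members go to \thref{thm:regexcep}, and generalized line graphs go to \thref{prop:reggl}, split into the cocktail party, regular and biregular cases.

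The step that is wrong as written is the biregular Diophantine analysis. The pair $\{k_1,k_2\}=\{1,2\}$ does not satisfy $2n_1k_1=3(k_1+k_2)$ in either orientation (it forces $n_1=9/2$ or $n_1=9/4$). So your reason for discarding it is aimed at a non-solution. The actual irregular solutions are $\{1,3\}$ and $\{2,6\}$, both with class sizes $6$ and $2$:
\begin{itemize}
\item $\{1,3\}$ is realized only by $H=2\cdot K_{1,3}$, whose line graph is the $2\cdot K_3$ you already found. It is excluded precisely because $H$ is disconnected.
\item $\{2,6\}$ gives $K_{2,6}$.
\end{itemize}
This is exactly what the paper's ``since $H$ is connected, $H\cong K_{2,6}$'' accomplishes. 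The paper also avoids the degree bookkeeping: writing $k_i=m/n_i$, with $m$ the number of edges, turns the condition into $1/n_1+1/n_2=2/3$, whose only solution with $n_1\ne n_2$ is $\{2,6\}$. If you keep your parametrization, orient it so that $k_1>k_2$ before bounding $n_1$ via $n_1\ge k_2$. For $k_1=1$ that inequality is vacuous, and you would instead need the integrality of $n_2=3n_1/(2n_1-3)$.

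A similar slip occurs in your regular case. For $r=1$ and $p=6$ one gets $L(3\cdot K_2)=3\cdot K_1$, which does satisfy $2n=3(k+2)$. It is excluded by connectivity, not by the equation, and it is the other disconnected member you found earlier.

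Finally, the detour through the exceptional graphs with $\lambda_{\min}>-2$ is unnecessary, since \thref{thm:regexcep} already says every regular exceptional graph has $\lambda_{\min}=-2$. Your explicit check that the six line graphs have $\lambda_{\min}=-2$ is a harmless addition that the paper leaves implicit. With these corrections the argument is complete.
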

Note that $3\cdot K_1\cong L(3\cdot K_2)$ and $2\cdot K_3\cong L(2\cdot K_3)\cong L(2 \cdot K_{1,3})$ are also line graphs of (bi)regular graphs. 
\begin{proof}
    Let $G$ be a $k$-regular graph on $n$ vertices with $\ymin\ge -2$ and $2n=3(k+2)$.
    
    Suppose first that $G$ is disconnected. Then since every connected component of $G$ is a $k$-regular graph, we have $n\ge 2(k+1)$. Together with $2n=3(k+2)$ we get that either $(n,k)=(3,0)$ or $(n,k)=(6,2)$, so that either $G\cong 3\cdot K_1$ or $G\cong 2\cdot K_3$. These graphs have least eigenvalue $-1$, so are not part of $\mathcal G_3$.

    Now suppose that $G$ is connected. If $G$ is exceptional, then by \thref{thm:regexcep} it is of the second layer, and $G\in \mathcal G_3$. If $G$ is a generalized line graph, we apply \thref{prop:reggl}. If $G\cong CP(m)$, then $n=2m$ and $k=2m-2$, so $2n=3(k+2)$ is impossible. If instead $G\cong L(H)$ for a regular graph $H$, then it can be shown that $2n=3(k+2)$ is equivalent to $|V(H)|=6$. There are five connected regular graphs on six vertices (namely $K_6$, $CP(3)$, $K_{3,3}$, $K_3\square K_2$, and $C_6$), and their line graphs are in $\mathcal G_3$. Lastly, if $G\cong L(H)$ for a biregular graph $H$, then it can be shown that $2n=3(k+2)$ is equivalent to the harmonic mean of the sizes of the two bipartition classes of $H$ to be 3; that is, to $2/(1/n_1+ 1/n_2)=3$ where $n_1$ and $n_2$ are the sizes. Since we can assume that $H$ is irregular, it follows that $n_1=2$ and $n_2=6$. Since $H$ is connected, we now know that $H\cong K_{2,6}$. Its line graph is also an element of $\mathcal G_3$.
\end{proof}

\begin{rmk}\thlabel{rmk:G3}
    It follows from \thref{lem:G3} that all graphs from $\mathcal G'_3$ are induced subgraphs of the Schläfli graph; for the exceptional graphs this follows from \thref{thm:regexcep}, and for the line graphs this can be seen from Figure~\ref{fig:Chang and Schlaefli}. Moreover, one can check that the graphs in $\mathcal G'_3$ are identifiable from their spectrum (for the exceptional graphs, this follows from \cite[Table A3.3]{-2}).
\end{rmk}

\begin{figure}[ht]
    \begin{center}
        \begin{tikzcd}[column sep=small]
            & 184 \ar[d,dash] &&&&&\\
            & 183 \ar[d, dash] &&&&&\\
            & 182 \ar[d, dash] \ar[dl, dash] \ar[dr, dash] && 181 \ar[dl, dash] \ar[d, dash] &&&\\
            179 \ar[d, dash] & 180 \ar[dl, dash] \ar[d, dash] \ar[dr, dash] & 178 \ar[d, dash] \ar[drrr, dash, dotted] & 177 \ar[dl, dash] \ar[d, dash] \ar[dr, dash] &&&\\
            174 \ar[d, dash] \ar[dr, dash] & 176 \ar[dl, dash] \ar[d, dash] \ar[dr, dash] & 173 \ar[dl, dash] \ar[d, dash] \ar[dr, dash, dotted] \ar[drr, dash, dotted] & 172 \ar[dl, dash] & 171 \ar[dll, dash] \ar[d, dash, dotted] \ar[dr, dash] & 175 \ar[dl, dash, dotted] \ar[dr, dash, dotted] & L(K_6) \ar[dl, dash]\\
            166 \ar[d, dash] \ar[dr, dash] & 170 \ar[d, dash] & 167 \ar[dl, dash] \ar[d, dash] \ar[dr, dash, dotted] & 169 \ar[d, dash, dotted] & 168 \ar[dl, dash, dotted] & L(CP(3)) \ar[dlll, dash] & L(K_{2,6})\\
            L(K_{3,3}) \ar[dr, dash] & 164 \ar[d, dash] & L(K_3\square K_2) \ar[dl, dash] \ar[dr, dash, dotted] & 165 \ar[d, dash, dotted] &&&\\
            & C_6 \ar[d, dash] && 2\cdot K_3 &&&\\
            & 3\cdot K_1 \ar[d, dash] &&&&&\\
            & \emptyset &&&&&
        \end{tikzcd}
    \end{center}
    \caption{Diagram of the graphs in $\mathcal G'_3\cup \{\emptyset\}$. The numbers 164--184 refer to the identifiers used for the 21 regular exceptional graphs of the second layer as in \cite[Table A3]{-2}. Graphs on the $i$'th row, counting from the bottom and starting at 0, have $3i$ vertices. The full lines indicate that the graph on the lower level can be obtained from the graph $G$ on the higher level by removing a Hoffman color class of $G$ of size 3. The dotted lines indicate where a graph can be obtained by removing a Hoffman coclique of size 3 that is not a Hoffman color class.}
    \label{fig:HasseLayer2}
\end{figure}
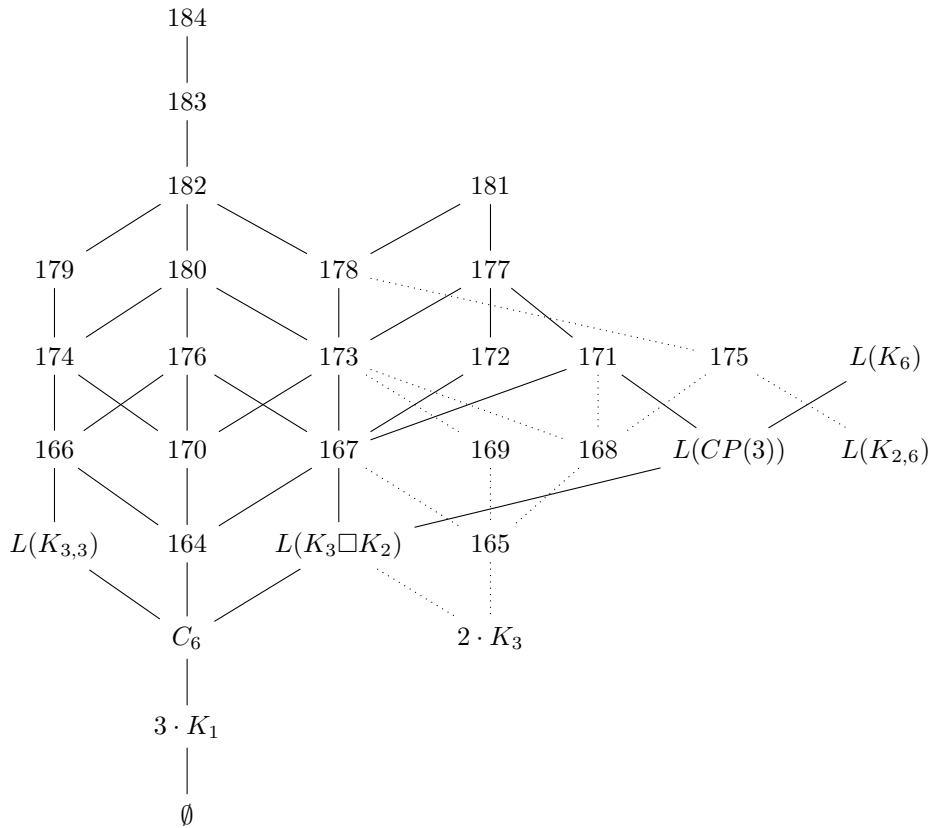

For every graph $G\in \mathcal G_3$ we generate all the cocliques $C$ of size 3 using a computer algebra system (such as SageMath \cite{sage}), and identify the resulting graph $G\setminus C$ among $\mathcal G_3$ using the spectrum (note that we do not have to consider graphs in $\mathcal G'_3 \setminus \mathcal G_3$ as they don't have cocliques of size 3 by \thref{lem:G3 coclique}). This produces the diagram of Figure~\ref{fig:HasseLayer2} not taking into account the distinction between whether $C$ is a Hoffman color class or not. To determine this distinction, it satisfies to check whether the line corresponding to a coclique $C$ of size 3 is part of a downwards path in Figure~\ref{fig:HasseLayer2} to $\emptyset$ or not. This works because Hoffman colorings of $G$ correspond to downwards paths from $G$ to $\emptyset$, and Hoffman colorings of $G$ using $C$ (so that $C$ is a Hoffman color class) correspond to downwards paths from $G$ to $\emptyset$ using the line from $G$ to $G\setminus C$.

For this reason, the graphs with identifiers 165, 168, 169, and 175 as shown in Figure \ref{fig:HasseLayer2} are not Hoffman colorable (as said before). The graph $L(K_{2,6})$ is also not Hoffman colorable for the same reason. The graph $2\cdot K_3$ is Hoffman colorable, but not with color classes of size 3.

Figure \ref{fig:HasseLayer2} also clearly shows the three maximal graphs from the proof of \thref{thm:regexcepcone}; $L(K_6)$, the graph with identifier 181 (which is $M_{20}$), and the graph with identifier 184 (which is the Schläfli graph, $M_{24}$).

Figure \ref{fig:HasseLayer2} reveals a reflectional symmetry with respect to a horizontal axis: if $S$ denotes the Schläfli graph, then the induced subgraph of $S$ on $X$ corresponds to the induced subgraph of $S$ on the complement $V(S) \setminus X$. In particular, the maximality of the Hoffman colorable induced subgraphs $L(K_6)$, $M_{20}$, and $M_{24}$ corresponds to the absence of size 3-cocliques in $L(K_{2,6})$, $2\cdot K_3$, and $\emptyset$ respectively.

\subsection{Irregular graphs that are switching-equivalent to the line graph of a graph on eight vertices}\label{sec:typea}
In this section we deal with the non-trivially Hoffman colorable irregular graphs that are an element of $\mathcal S_8$.

\begin{thm}\thlabel{thm:typea}
    There are exactly 35 non-trivially Hoffman colorable exceptional graphs in $\mathcal S_8$ that are irregular. Of these, four are maximal with respect to chromatic components, namely $M_5$, $M_{22}$, $M_{23}$, and $M_{25}$. In other words, each of these 35 graphs is a chromatic component of $M_5$, $M_{22}$, $M_{23}$, or $M_{25}$.
\end{thm}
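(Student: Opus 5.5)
Throughout, $G\in\mathcal S_8$ is irregular, exceptional and non-trivially Hoffman colorable; by \thref{thm:excep>-2} we have $\ymin(G)=-2$, and we fix a Hoffman coloring with $\chi$ classes. The plan is to reduce to a finite, structured search.

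\emph{Step 1: structure of the coloring.} By the Decomposition Theorem (\thref{thm:Decomp}) every dichromatic component $H$ is Hoffman colorable with $\ymax(H)=2$. Hence by \eqref{eq:eigenvalues of connected component of dichromatic component} every connected component of $H$ is a bipartite Smith graph ($C_{2m}$, $W_n$, $\mathcal F_7,\mathcal F_8,\mathcal F_9$), and $\ymax(G)=2(\chi-1)$. The Perron eigenvector $x$ restricts to each such component as its eigenvector for $2$. Two restrictions of the same vertex must agree up to the common scaling of $x$, which is a strong rigidity constraint.

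\emph{Step 2: use the $\mathcal S_8$ structure.} By \thref{prop:SeidelNbh}, $G=L(K)_F$ for some graph $K$ on eight vertices, represented by vectors $a_{ij},b_{ij}$ at $60^\circ$ to $e$. A coclique in $G$ is a set of pairs $\{i,j\}$ with prescribed intersection patterns, by \eqref{eq:innerproductab}. In particular a coclique has bounded size: the $a$-pairs form a matching, the $b$-pairs form a matching, and each $a$-pair meets each $b$-pair in exactly one point. This bounds the color class sizes, so $\chi$ and $|V(G)|\le 28$ are bounded and the possible class sizes form a short list. \thref{prop:univ} shows $G$ is not a cone, so no class has size $1$ (see also \thref{rmk:no overlap}).

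\emph{Step 3: computation.} Enumerate, up to the action of $\Aut(E_8)$ fixing $e$ (that is, $S_8$ together with the switching symmetries), all sets of pairwise disjoint cocliques in this $28$-vertex model. Concretely, form the intersection graph of all maximal cocliques of the cone-neighbourhood $L(K_8)$, and search its cocliques, analogously to Algorithm \ref{algo:reg} but with the cocliques no longer required to be Hoffman cocliques of the ambient graph. For each union of $\chi\ge3$ classes, and for each switching set $F$, test the equality $\ymax=2(\chi-1)$ and $\ymin=-2$. Then discard the regular graphs (already treated in \thref{thm:regexcepcone}) and the generalized line graphs, using the 31 forbidden subgraphs. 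Finally, compute the partial order by chromatic components to extract the maximal ones. We expect these to be $M_5,M_{22},M_{23},M_{25}$, with class sizes $\{3,4\}$, and $35$ graphs in total.

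\emph{Main obstacle.} The raw search space of pairs $(K,F)$ with $|E(K)|\le 28$ is too large to scan naively. The hardest step is pruning it enough. I would first use Step 1: each pair of classes must induce a disjoint union of Smith graphs with eigenvector weights matching a global vector $x$ satisfying $Ax=2(\chi-1)x$. This lets the search build the graph class by class, extending only partial colorings whose dichromatic components are Smith forests, and checking consistency of $x$ at each step. I also expect the classes of size $3$ to behave like the ratio-bound-$3$ cocliques of Section \ref{sec:regexcepcone}. The chromatic component obtained by dropping the size-$4$ class should then lie in $\mathcal G_3'$, which by \thref{lem:G3} and Figure \ref{fig:HasseLayer2} is a short explicit list to extend.
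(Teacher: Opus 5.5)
\textbf{There is a genuine gap.} As written, your proposal does not prove the statement. The number $35$ and the list $M_5,M_{22},M_{23},M_{25}$ appear only as what you \emph{expect} a computer search to return. You also concede that this search cannot be run naively, and you leave the pruning as a sketch.

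Moreover, the search specified in Step~3 would not find these graphs at all. The maximal cocliques of $L(K_8)$ are the perfect matchings of $K_8$, all of size $4$. A coclique of a switched graph $L(K_8)_F$ need not even be a matching; see the mixed $4$-cycle configuration in the proof of \thref{lem:cocliquebound}. Every graph you are looking for has all color classes but one of size $3$, so unions of maximal cocliques of $L(K_8)$ followed by switching never produce them. Restricting to maximal cocliques is only legitimate inside an ambient graph into which $H\sqcup K_1$ does not embed, which is why the paper needs \thref{lem:irregularinduced} for that kind of search.

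Finally, Step~2 records only $|C|\le 4$. What actually drives the argument is the equality case: a $4$-coclique of a graph in $\mathcal S_8$ is $2$-regular.

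The missing idea is the structural reduction of \thref{lem:G^_C}, which turns the whole problem into reading off Figure~\ref{fig:HasseLayer2}.
\begin{enumerate}[label=(\roman*)]
\item The $2$-regularity excludes $W_5$, $W_8$ and every Smith graph with a $5$-coclique. So each dichromatic component is one of $C_4,C_6,C_8,W_6,\mathcal F_7$, or the disconnected $2\cdot C_4$.
\item Irregularity makes the Perron vector $x$ non-constant. Hence some dichromatic component carries a non-constant eigenvector, i.e.\ it is $2\cdot C_4$ with unequal weights, $W_6$, or $\mathcal F_7$.
\item Propagate $x$ through the dichromatic components that share a class. The $2\cdot C_4$ case makes $H$ disconnected. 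The $W_6$ case yields an independent set of size $2\chi$, so $\chi\le 2$ by the coclique bound.
\item Hence $\mathcal F_7$ occurs. Then exactly one class $I$ has size $4$ with $x|_I=(3,1,1,1)$, and all other classes have size $3$ with $x\equiv 2$. This is precisely what you only ``expect''.
\item Your ``extend from $\mathcal G'_3$'' then needs a switching trick. The weight-$3$ vertex $u\in I$ is adjacent to every vertex outside $I$. Switching with respect to $C=I\setminus\{u\}$ makes $u$ universal, so $H=(\widehat G)_C$ with $G\in\mathcal G_3$ non-trivially Hoffman colorable and $C$ a Hoffman color class of $G$.
\item Conversely, every such pair $(G,C)$ gives a valid $H$. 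The explicit Perron vector with entries $3,1,2$ shows irregularity and $h(H)=h(G)$. An induced $(\widehat D)_C\cong\mathcal F_7$, with $D\cong C_6$ a dichromatic component of $G$, shows exceptionality.
\item Isomorphism is controlled: $(\widehat{G_1})_{C_1}\cong(\widehat{G_2})_{C_2}$ exactly when $G_1\cong G_2$ and $G_1\setminus C_1\cong G_2\setminus C_2$. This needs a Perron-vector argument plus a small computer check.
\end{enumerate}
With this bijection, the count is the number of full lines of Figure~\ref{fig:HasseLayer2} above $C_6$, namely $35$. Since $(\widehat G)_C$ is a chromatic component of $(\widehat M)_C$ whenever $G$ is a chromatic component of $M$ having $C$ as a class, the maximal graphs come from the four full lines leaving $L(K_6)$, $181$ and $184$.
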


As follows from the arguments in this section, these 35 graphs $G$ are not cone graphs, and have $\mathfrak C(G)=\{3,4\}$ (see \thref{rmk:no overlap,rmk:type 3-4}). To prove \thref{thm:typea}, we use the following result.

\begin{lem}\thlabel{lem:G^_C}
    \begin{enumerate}[label=$(\arabic*)$]
    \item Let $G\in \mathcal G_3$ be non-trivially Hoffman colorable, and let $C$ be a Hoffman color class of $G$. Then the graph $H=(\widehat G)_C$ satisfies the following properties:
    \begin{itemize}
        \item $H$ is non-trivially Hoffman colorable,
        \item $H$ has smallest eigenvalue $-2$,
        \item $H$ is an exceptional graph,
        \item $H\in \mathcal S_8$,
        \item $H$ is irregular.
    \end{itemize}
    \item Conversely, any graph $H$ satisfying the five conditions above can be obtained in this way, that is $H\cong(\widehat G)_C$ for some non-trivially Hoffman colorable $G\in \mathcal G_3$ with Hoffman color class $C$.
    \item For $i=1,2$, let $G_i\in \mathcal G_3$ be non-trivially Hoffman colorable, and let $C_i$ be a Hoffman color class of $G_i$. Then the following are equivalent:
    \begin{enumerate}[label=$(\roman*)$]
        \item $(\widehat{G_1})_{C_1}\cong (\widehat {G_2})_{C_2}$,
        \item there exists a graph isomorphism $\varphi:G_1\to G_2$ with $\varphi(C_1)=C_2$,
        \item $G_1\cong G_2$ and $G_1 \setminus C_1 \cong G_2 \setminus C_2$.
    \end{enumerate}
    \end{enumerate}
\end{lem}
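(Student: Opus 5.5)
The plan is to verify the five properties of part (1) by an explicit computation on $H=(\widehat G)_C$. For part (2) I would recover the pair $(G,C)$ from $H$ using the structure of dichromatic components. Part (3) would use a canonical reconstruction together with a finite check on Figure~\ref{fig:HasseLayer2}.

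\emph{Part (1).} Let $G\in\mathcal G_3$ be $k$-regular on $n$ vertices, with $2n=3(k+2)$ and $\ymin(G)=-2$. By \thref{prop:constantequitable}, a Hoffman coloring of $G$ has $\chi(G)=n/3=(k+2)/2$ classes of size $3$, and each vertex has exactly $2$ neighbours in every other class. Write $u$ for the cone vertex of $\widehat G$. After switching with respect to $C$:
\begin{itemize}
\item $u$ becomes non-adjacent to $C$, so $C\cup\{u\}$ is a coclique of size $4$;
\item each vertex of $C$ now has exactly $1$ neighbour in every other class, and hence degree $n-3-k=k/2$;
\item $u$ has degree $n-3=3k/2$;
\item every other vertex still has degree $(k-2)+1+1=k$.
\end{itemize}
Since $k/2<k<3k/2$, $H$ is irregular. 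Assign the value $1$ to vertices outside $C\cup\{u\}$, $1/2$ to vertices of $C$, and $3/2$ to $u$; one checks directly that this is a positive eigenvector with eigenvalue $k$, so $\ymax(H)=k$ by Perron--Frobenius.

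Next I show $H\in\mathcal S_8$. The graph $H$ is switching-equivalent to $\widehat G$, hence to $G\sqcup K_1$ (switch at $u$). By \thref{rmk:G3}, $G$ is an induced subgraph of the Schläfli graph, and by \thref{exa:Schlaefli} the Schläfli graph plus an isolated vertex is switching-equivalent to $L(K_8)$. Thus $G\sqcup K_1$, and therefore $H$, is switching-equivalent to an induced subgraph of $L(K_8)$, i.e.\ to the line graph of a graph on eight vertices. By \thref{prop:SeidelNbh}, $\ymin(H)\ge-2$.

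The coloring $\{C\cup\{u\}\}\cup\{\text{other classes of }G\}$ uses $(k+2)/2$ colors. On the other hand,
\[h(H)\ge 1+\tfrac k2=\tfrac{k+2}{2},\]
so equality holds throughout, $H$ is Hoffman colorable, and $\ymin(H)=-2$. Non-triviality follows because $\chi(H)=\chi(G)\ge3$ and the color classes have unequal sizes $4$ and $3$. For exceptionality, note that $H$ is not a generalized line graph: otherwise $H$ would be chromatically balanced by \thref{thm:gl}, and then the proof of \thref{thm:gl} shows its Perron vector takes only the values $1$ and $2$. Our Perron vector instead takes three distinct values $1/2,1,3/2$, which contradicts uniqueness of the Perron vector.

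\emph{Part (2).} This is the step I expect to be the main obstacle. Let $H$ satisfy the five conditions, and take a Hoffman coloring of $H$. By the Decomposition Theorem and \eqref{eq:eigenvalues of connected component of dichromatic component}, every connected component of every dichromatic component is a Smith graph with largest eigenvalue $2$, and moreover $\ymax(H)=2(\chi-1)$.

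The key claim is that some color class $D$ has size $4$ and contains a vertex $u$ adjacent to every vertex outside $D$. I would argue this as follows. Irregularity excludes the regular situation of \thref{prop:constantequitable}, which should force unequal class sizes. A dichromatic component on a $4$-class and a $3$-class that is a connected Smith graph on $7$ vertices must be $\mathcal F_7$. In $\mathcal F_7$ the centre lies in the $4$-side and is adjacent to all $3$ vertices of the other side. I would then show this centre is the same vertex for all $3$-classes; here I expect to use the Perron vector, comparing entries across the components as in Figure~\ref{fig:Smith}. The fact that $H\in\mathcal S_8$ should bound the possible class sizes.

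Given such $u$, set $C=D\setminus\{u\}$. Switching $H$ with respect to $C$ makes $u$ universal, so $H_C=\widehat{G}$ with $G=H_C-u$. Reversing the degree and eigenvector computation of Part (1) then shows that $G$ is regular with $2n=3(k+2)$ and $\ymin\ge -2$, i.e.\ $G\in\mathcal G'_3$. It also shows that $G$ is Hoffman colorable with $C$ as a Hoffman color class, and then \thref{lem:G3 coclique} gives $G\in\mathcal G_3$.

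\emph{Part (3).} The implications (ii)$\Rightarrow$(i) and (ii)$\Rightarrow$(iii) are immediate. For (i)$\Rightarrow$(ii), the vertex $u$ is canonical in $H$, being the unique vertex of maximum degree $3k/2$, and $C$ is its set of non-neighbours. Hence any isomorphism of the switched cones maps $u_1\mapsto u_2$ and $C_1\mapsto C_2$; switching back and deleting $u$ yields the required $\varphi$. For (iii)$\Rightarrow$(ii), I would use the finite list in Figure~\ref{fig:HasseLayer2}, where graphs are identified by their spectra. For each $G\in\mathcal G_3$ one checks by computer that $\Aut(G)$ acts transitively on the Hoffman color classes $C$ with a given isomorphism type of $G\setminus C$.
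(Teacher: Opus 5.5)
\textbf{Parts (1) and (3).} These are sound. In (1) you take a slightly different route from the paper:
\begin{itemize}
\item You get $\ymin(H)=-2$ from the Hoffman sandwich, where the paper reads it off the induced subgraph $(\widehat{C_6})_C\cong\mathcal F_7$.
\item You get exceptionality from the observation (via the proof of \thref{thm:gl}) that a Hoffman colorable generalized line graph with $\ymin=-2$ has a Perron vector with at most two distinct values, whereas yours has three. The paper again uses the induced $\mathcal F_7$.
\end{itemize}
Both routes work. You should, however, record that $H$ is connected before invoking Perron--Frobenius uniqueness. This is clear: $u$ sees all of $V(G)\setminus C$, and each vertex of $C$ has $k/2>0$ neighbours there. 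In (3), pinning down $u$ as the unique vertex of degree $3k/2$ replaces the paper's Perron-vector argument, and (iii)$\Rightarrow$(ii) is the same computer check.

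\textbf{The gap is in Part (2).} The key structural claim is only asserted. The one concrete inference you offer, that irregularity ``should force unequal class sizes'', is false as stated. A non-constant Perron vector is compatible with equal class sizes:
\begin{itemize}
\item a dichromatic component $2\cdot C_4$ on which $x$ restricts to $(1,1,a,a;1,1,a,a)$ with $a\neq 1$;
\item a dichromatic component $W_6$ with restriction $(2,1,1;2,1,1)$.
\end{itemize}
Moreover, unequal sizes alone would not give $4$ and $3$, since $W_5$, $W_7$ and $\mathcal F_8$ have other bipartitions.

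\textbf{What is missing.}
\begin{enumerate}
\item[(a)] Apply \thref{lem:cocliquebound} to each connected component of a dichromatic component. This reduces the possibilities to $C_4,C_6,C_8,W_6,\mathcal F_7$, with $2\cdot C_4$ the only disconnected option.
\item[(b)] Irregularity makes $x$ non-constant. By the Decomposition Theorem, some dichromatic component is then $2\cdot C_4$ with $a\neq1$, or $W_6$, or $\mathcal F_7$.
\item[(c)] In the first two cases the pattern of $x$ on one class forces every dichromatic component through that class to be of the same type, so the pattern propagates to all classes.
  \begin{itemize}
  \item For $2\cdot C_4$, the vertices with value $a$ are then joined only to each other, so $H$ is disconnected.
  \item For $W_6$, the vertices with value $1$ form a coclique of size $2\chi(H)$. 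The coclique bound then gives $\chi(H)\le 2$, contradicting non-triviality.
  \end{itemize}
\item[(d)] In the $\mathcal F_7$ case, $x|_I\propto(3,1,1,1)$ forces every dichromatic component through $I$ to be $\mathcal F_7$ with the same centre $u$. Hence every other class has size $3$ with $x$ constant on it. It follows that:
  \begin{itemize}
  \item any two $3$-classes induce $C_6$;
  \item each vertex of $C=I\setminus\{u\}$ has exactly one neighbour in each $3$-class, and vice versa.
  \end{itemize}
\end{enumerate}

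Only this full adjacency pattern, not merely the existence of $u$, lets you ``reverse Part (1)''. It shows that $G=H_C-u$ is $(2\chi(H)-2)$-regular with a coloring into $3$-sets. Then $\ymin(G)\ge-2$ (since $G\in\mathcal S_8$) together with the Hoffman sandwich makes this a Hoffman coloring with $C$ as a class. Non-triviality follows from $\chi(G)=\chi(H)\ge 3$ and from the ratio bound $3\neq-\ymin(G)$.
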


For an example of a non-trivially Hoffman colorable graph from \thref{lem:G^_C}, see Figure \ref{fig:F7exa}.
\begin{figure}[ht]
\begin{center}
        \begin{tikzpicture}[scale=0.4]
            \coordinate (1) at (8,-1.73);
            \coordinate (2) at (10,-1.73);
            \coordinate (3) at (7,0);
            \coordinate (4) at (8,1.73);
            \coordinate (5) at (10,1.73);
            \coordinate (6) at (11,0);
            \coordinate (c1) at (6,-1.73);
            \coordinate (c2) at (12,-1.73);
            \coordinate (c3) at (9,3.46);
            \coordinate (conevertex) at (9,0);
            \coordinate (l1) at (0,-2);
            \coordinate (l2) at (-1.29,-1.53);
            \coordinate (l3) at (-1.97,-0.35);
            \coordinate (l4) at (-1.73,1);
            \coordinate (l5) at (-0.68,1.88);
            \coordinate (l6) at (0.68,1.88);
            \coordinate (l7) at (1.73,1);
            \coordinate (l8) at (1.97,-0.35);
            \coordinate (l9) at (1.29,-1.53);
            \draw[gray,thick] (l1) -- (l2) -- (l3) -- (l4) -- (l5) -- (l6) -- (l7) -- (l8) -- (l9) -- (l1) -- (l3) -- (l5) -- (l7) -- (l9) -- (l2) -- (l4) -- (l6) -- (l8) -- (l1);
            \draw[gray, thick] (c1) -- (c2) -- (c3) -- (c1) (1) -- (5) -- (4) -- (2) -- (6) -- (3) -- (1);
            \filldraw[black] (1) circle (4pt) (2) circle (4pt) (3) circle (4pt) (4) circle (4pt) (5) circle (4pt) (6) circle (4pt) (c1) circle (4pt) (c2) circle (4pt) (c3) circle (4pt) (l1) circle (4pt) (l2) circle (4pt) (l3) circle (4pt) (l4) circle (4pt) (l5) circle (4pt) (l6) circle (4pt) (l7) circle (4pt) (l8) circle (4pt) (l9) circle (4pt);
            \node (0,0) {$164$};
            \filldraw[black] (conevertex) circle (8pt);
            \draw[black] (c1) circle (8pt) (c2) circle (8pt) (c3) circle (8pt) (l1) circle (8pt) (l4) circle (8pt) (l7) circle (8pt);
        \end{tikzpicture}
\end{center}
\caption{\thref{lem:G^_C} (1) applied to the regular exceptional graph with identifier 164 from \cite[Chapter 4]{-2}: the graph on the right is non-trivially Hoffman colorable. The coclique used is indicated by the circled points, and the added cone vertex is indicated by the larger dot.}
\label{fig:F7exa}
\end{figure}
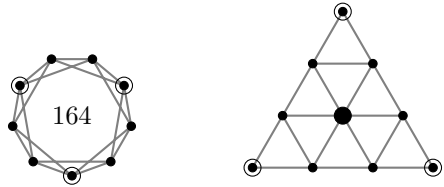

To prove the second part of \thref{lem:G^_C} we use the following result, which is very similar in statement to the ratio bound (\thref{thm:ratiobound}).

\begin{lem}\thlabel{lem:cocliquebound}
    Let $G\in \mathcal S_8$. If $C$ is a coclique in $G$, then
    $$|C|\le 4.$$
    In case of equality, $C$ is 2-regular.
\end{lem}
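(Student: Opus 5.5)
The plan is to work entirely inside $E_8$, using the representation provided by \thref{prop:SeidelNbh}. Since $G\in\mathcal S_8$, the cone $\widehat G$ is $E_8$-representable. By the symmetry of $E_8$ used in the proof of \thref{prop:SeidelNbh}, we may assume the cone vertex is represented by $e=\mathbbm 1$. Then every vertex $v$ of $G$ is sent to a vector $f(v)$ of type $a$ or $b$. Each such vector satisfies $\langle f(v),f(v)\rangle=8$ and $\langle f(v),e\rangle=4$. Moreover, two distinct vertices of $G$ have inner product $4$ if they are adjacent and $0$ otherwise.

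Next, let $C=\{v_1,\dots,v_m\}$ be a coclique in $G$. The vectors $f(v_1),\dots,f(v_m)$ are then pairwise orthogonal and nonzero, so they form an orthogonal basis of their span $U$. Let $p$ be the orthogonal projection of $e$ onto $U$. Bessel's inequality gives
$$8=\langle e,e\rangle\ \ge\ \langle p,p\rangle=\sum_{i=1}^m\frac{\langle e,f(v_i)\rangle^2}{\langle f(v_i),f(v_i)\rangle}=\sum_{i=1}^m\frac{16}{8}=2m,$$
hence $|C|=m\le 4$.

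For the equality case, assume $m=4$. Then equality holds in Bessel's inequality, so $e\in U$, and explicitly
$$e=\sum_{i=1}^4\frac{\langle e,f(v_i)\rangle}{8}f(v_i)=\frac12\sum_{i=1}^4 f(v_i).$$
Now take any vertex $w\notin C$ of $G$. Its vector satisfies $\langle f(w),e\rangle=4$, and $\langle f(w),f(v_i)\rangle\in\{0,4\}$ according to whether $w$ and $v_i$ are non-adjacent or adjacent. Taking the inner product of the displayed identity with $f(w)$ gives $4=\frac12\cdot 4\cdot|N(w)\cap C|$. Therefore $w$ has exactly $2$ neighbours in $C$, which says precisely that $C$ is 2-regular.

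The only step needing care is the normalisation $f(\text{cone vertex})=e$, which places all vertices of $G$ among the $a_{ij}$ and $b_{ij}$. This is already justified in the proof of \thref{prop:SeidelNbh}. The rest is linear algebra, so I do not expect a serious obstacle.
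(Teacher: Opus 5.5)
Your proof is correct, and it takes a genuinely different route from the paper's.

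The paper argues combinatorially inside $L(H)_F$. A coclique contained in one switching class is a matching of $H$, so it has size at most $4$, with equality exactly for perfect matchings. A coclique meeting both classes is forced into the edges $13,24$ of one class and $23,14$ of the other, i.e.\ a $4$-cycle of $H$ alternating between the classes. In each case, $2$-regularity is then checked by a case analysis on how an outside edge meets the relevant vertices.

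You instead work in the $E_8$-representation of $\widehat G$ from \thref{prop:SeidelNbh}. In the explicit construction there the cone vertex is already $e$, so no appeal to the symmetry of $E_8$ is needed. You then get both parts from a single projection. Bessel's inequality gives $2m\le 8$. Equality forces $e=\frac12\sum_i f(v_i)$, and pairing this with $f(w)$ gives $|N(w)\cap C|=2$ in one line.

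What your approach buys:
\begin{itemize}
\item It is shorter and free of case analysis.
\item It proves more. You only use that $A(\widehat G)+2I$ is a Gram matrix, i.e.\ that $\lambda_{\min}(\widehat G)\ge -2$. So the conclusion holds for every graph whose cone has smallest eigenvalue at least $-2$, not only for $\mathcal S_8$. For the bound alone this is just interlacing with the induced star $K_{1,m}$, whose smallest eigenvalue is $-\sqrt m$.
\end{itemize}

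What the paper's approach buys is an explicit description of the cocliques of size $4$ in terms of $H$ and the switching set. That extra structure is not used later in the paper.
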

\begin{proof}
     For the sake of brevity, we write $ij$ for the edge $\{i,j\}$. Let $\{E_1,E_2\}$ be a partition of the edges of $H$ such that $L(H)_{E_1}\cong G$. We distinguish the cases where $C$ contains only edges from one edge set, and where $C$ contains edges from both edge sets.

    First, suppose that $C$ only contains edges from one edge set, say $E_1$. Then $C$ is a matching in $H$. Since $H$ has eight vertices, $C$ is of size at most 4. In case of equality, $C$ is a perfect matching of $H$. If $e=ij\in E(H)\setminus C$, then write $c_i,c_j$ for the elements of $C$ that contain $i,j$ respectively. If $e\in E_1$, then, in $G$, $e$ is adjacent to $c_i$ and $c_j$. If $e\in E_2$, then, in $G$, $e$ is adjacent to the other two elements of $C$. We conclude that $C$ is 2-regular.
    
    Next, suppose that $C$ contains an edge from $E_1$ and an edge from $E_2$. Then they intersect. Without loss of generality, assume that $13 \in C \cap E_1$ and $23 \in C\cap E_2$. If $C$ contains another edge $ij$, without loss of generality from $E_1$, then $ij$ does not intersect $13$ and it does intersect $23$. Without loss of generality, say $ij=24$. Now $C$ cannot contain any other edge of $E_1$. Suppose $C$ contains some additional edge $ij$ of $E_2$. With similar reasoning, we know $ij=14$. Again, no other edges of $E_2$ can be contained in $C$. So, the size of $C$ is at most 4. If it is 4, then let $e$ be any element of $E_1 \cup E_2$ that is not in $C$. If $e$ is not incident to any of the vertices $1,2,3,4$, then, in $G$, it is adjacent to the two edges in $C$ that are not in the same edge set as $e$. If $e$ intersects $C$ in one vertex, say without loss of generality $e=1j$, then $e$ is adjacent in $G$ either to $13$ and $23$ if $e\in E_1$, or to $14$ and $24$ if $e\in E_2$. Lastly, if $e$ intersects the cycle in two vertices, then without loss of generality $e=12$ and $e$ intersects each of the four edges in $C$. Then $e$ is adjacent to those two edges of $C$ that are in the same edge set as $e$. In each case, $e$ is adjacent to exactly two vertices of $C$, so $C$ is 2-regular.
\end{proof}

\begin{rmk}\thlabel{rmk:no overlap}
Note that \thref{lem:cocliquebound} implies that non-trivially Hoffman colorable cone graphs cannot be an element of $\mathcal S_8$; such a graph would have a coclique $C$ of size 4 by \thref{thm:conegraphs} but it cannot be 2-regular because the cone vertex is adjacent to all four of the vertices of $C$.
\end{rmk}

We are now ready to prove \thref{lem:G^_C}.

\begin{proof}[Proof of \thref{lem:G^_C}]
    \begin{itemize}
        \item[(1)] Let $G\in \mathcal G_3$ be non-trivially Hoffman colorable, with Hoffman color class $C$. Let $H=(\widehat G)_C$.
        
        We first show that $H\in \mathcal S_8$. As noted in \thref{rmk:G3}, we know that $G$ is an induced subgraph of the Schläfli graph. The cone graph over the Schläfli graph can be obtained by Seidel switching $L(K_8)$ with respect to the vertex set given by $L(K_6)$ (see \thref{exa:Schlaefli}), so that the cone over the Schläfli graph is an element of $\mathcal S_8$. Then also $\widehat G \in \mathcal S_8$, and so $H=(\widehat G)_C \in \mathcal S_8$.

        We now show that $H$ has smallest eigenvalue $-2$ and that $H$ is exceptional. Since $H\in \mathcal S_8$ we already know that $\y_{\min}(H)\ge -2$ by \thref{prop:SeidelNbh}. Let $C'$ be a Hoffman color class of $G$ disjoint with $C$ (it exists because $C$ is part of a Hoffman coloring of $G$). Then the dichromatic component $D$ of $G$ with respect to the color classes $C$ and $C'$ must be isomorphic to $C_6$ by \thref{prop:constantequitable} (since $C_6$ is the only 2-regular bipartite graph on six vertices). Next note that $H$ now contains $(\widehat D)_C$ as an induced subgraph, and that $(\widehat D)_C \cong \mathcal F_7$. Since $\mathcal F_7$ is exceptional, also $H$ is exceptional (note that it is easily seen that $H$ is connected), and since $\y_{\min}(\mathcal F_7)=-2$, also $\y_{\min}(H)=-2$ by \thref{prop:subgraph}.

        Lastly, we show that $H$ is irregular and non-trivially Hoffman colorable. To do this, we give the Perron eigenvector $x$ of $H$. Define $x$ as the vector assigning 3 to the cone vertex $u$ of $\widehat G$, 1 to the vertices of $C$, and 2 to the vertices in $V(G)\setminus C$. Write $k=\y_{\max}(G)$ for the valency of $G$. Then one can verify that $x$ is an eigenvector for $H$ with eigenvalue $k$, using that $|V(G)|=\frac 32 (k+2)$ (implying that $u$ is adjacent in $H$ to $3k/2$ vertices, all in $V(G)\setminus C$), that every vertex in $C$ is adjacent in $H$ to exactly $k/2$ vertices in $V(G)\setminus C$, and that every vertex in $V(G)\setminus C$ is adjacent in $H$ to exactly $k-2$ vertices in $V(G)\setminus C$, 1 vertex in $C$ and to $u$. Since $x$ is a positive eigenvector, it must be the Perron eigenvector \cite[Theorem 2.2.1]{spectra}. We now know that $H$ is not regular (since $x$ is not a vector with all entries equal). We also know that $\y_{\max}(H)=k$. Now $G$ and $H$ have the same largest and smallest eigenvalues, and so (since $G$ is Hoffman colorable) the Hoffman bound \eqref{eq:Hoffman} applied to $H$ gives $\chi(H)\ge h(H)=h(G)=\chi(G)$. To show that $H$ is Hoffman colorable, we thus have to show that $\chi(H)\le \chi(G)$. But this is true because any optimal coloring of $G$ using $C$ as a color class induces a coloring of $H$ with the same number of colors by replacing the color class $C$ of $G$ by the independent set $C\cup\{u\}$ of $H$. Now $H$ is Hoffman colorable. Since $H$ is irregular and non-bipartite (because $G$ is non-bipartite and $G$ and $H$ have the same chromatic number), we know that $H$ is non-trivially Hoffman colorable.
        
        \item[(2)] Let $H\in \mathcal S_8$ be irregular, non-trivially Hoffman colorable, and exceptional, and suppose $\y_{\min}(H)=-2$. Let $D$ be a dichromatic component of $H$, and let $K$ be a connected component of $D$. By \eqref{eq:eigenvalues of connected component of dichromatic component}, we know that $\ymax(K)=2$ (so that $K$ is a Smith graph of type $C$, $W$, or $\mathcal F$). However, we also know that $K\in \mathcal S_8$. By \thref{lem:cocliquebound}, a coclique in $K$ must be of size at most 4, and if it is of size 4, then it must be 2-regular.
        
        Now if $K$ is of type $C$, then it must be $C_4$ or $C_6$, because larger even cycles have cocliques of size at least 5. If instead $K$ is of type $W$, then it must be $W_6$, because $W_7$ and $W_n$ for $n\ge 9$ have cocliques of size at least 5, and $W_5$ and $W_8$ have cocliques of size 4 that are not 2-regular. Finally, if $K$ is of type $\mathcal F$, then it must be $\mathcal F_7$, since $\mathcal F_8$ and $\mathcal F_9$ have cocliques of size 5. Summarizing, $K$ is isomorphic to one of the following Smith graphs, where we include the corresponding Perron eigenvectors with the two bipartite classes separated by a semicolon (also see Figure \ref{fig:Smith}):
    \begin{itemize}
        \item the 4-cycle $C_4$, with eigenvector $(1,1;1,1)$;
        \item the 6-cycle $C_6$, with eigenvector $(1,1,1;1,1,1)$;
        \item the 8-cycle $C_8$, with eigenvector $(1,1,1,1;1,1,1,1)$;
        \item the graph $W_6$, with eigenvector $(2,1,1;2,1,1)$;
        \item the graph $\mathcal F_7$, with eigenvector $(3,1,1,1;2,2,2)$.
    \end{itemize}
    Now if $D=K$ is connected, it can be any of these five options. If instead $D$ is disconnected, then by \thref{lem:cocliquebound} and the five options above for $K$, $D$ must be the disjoint union $2\cdot C_4$ of two 4-cycles. The space of positive eigenvectors then consists of vectors $(1,1,a,a;1,1,a,a)$ for $a\in \R_{>0}$.

    If $x$ is the Perron eigenvector of $H$, then by the Decomposition Theorem (\thref{thm:Decomp}), $x$ restricts to positive eigenvectors for all dichromatic components of $H$. Since $H$ was assumed to be irregular, we know that $x$ is not constant, implying that $H$ must contain $2\cdot C_4$ (in such a way that $a\ne 1$), $W_6$, or $\mathcal F_7$ as a dichromatic component. We now proceed by case distinction.
    \begin{itemize}
        \item Suppose $H$ contains $2\cdot C_4$ as a dichromatic component, such that the restriction of $x$ to $2\cdot C_4$ is $(1,1,a,a;1,1,a,a)$ for some $a> 0$ with $a\ne 1$. Let $I$ be a Hoffman color class of $2\cdot C_4$, so that $x$ restricts to $(1,1,a,a)$ on $I$. If $D'$ is now a dichromatic component of $H$ containing $I$ as a bipartite class, then since $x$ must restrict to a positive eigenvector of $D'$, the only possibility is that $D'$ is isomorphic to $2\cdot C_4$. Now there exists a coloring of $H$ (using $I$ as a color class) such that every color class is of size 4, and $x$ restricts to $(1,1,a,a)$ on every color class. Note that by the structure of $2\cdot C_4$, every vertex that is assigned $a$ by $x$ is only adjacent to vertices that are also assigned $a$ by $x$. This implies that $H$ is disconnected, contradicting that $H$ is exceptional.
        \item Next, suppose that $H$ contains $W_6$ as a dichromatic component. With similar reasoning as above, now $H$ must have a Hoffman coloring such that every color class is of size $3$, and that $x$ restricts to $(2,1,1)$ on every Hoffman color class. By the structure of $W_6$, every vertex $v$ with $x(v)=2$ is adjacent to every vertex of every color class other than its own, and every vertex $v$ with $x(v)=1$ is adjacent to the unique vertex $u$ with $x(u)=2$ of every color class other than its own. In particular, the set of vertices $v$ with $x(v)=1$ is an independent set in $H$ of size $2\chi(H)$. By \thref{lem:cocliquebound} we know that $2\chi(H)\le 4$ and hence that $H\cong W_6$ is bipartite, contradicting that $H$ is non-trivially Hoffman colorable.
        \item Finally, suppose that $H$ contains $\mathcal F_7$ as a dichromatic component. Let $I$ be the Hoffman color class of size 4 of $\mathcal F_7$. Then the restriction of $x$ to $I$ is $(3,1,1,1)$. If $D'$ is a dichromatic component containing $I$, then $D'$ must be isomorphic to $\mathcal F_7$. Hence there exists a Hoffman coloring of $H$ with $I$ as a color class, every other color class of size 3, and $x(v)=2$ for every $v\in V(G)\setminus I$. By the structure of $\mathcal F_7$, the unique vertex $u$ of $I$ with $x(u)=3$ is adjacent to every vertex of $V(G)\setminus I$, every vertex of $C\coloneqq I\setminus \{u\}$ is adjacent to exactly one vertex of every color class of size 3, and every vertex in a color class of size 3 has exactly one neighbor in $C$ and two neighbors in every other color class (since the only bipartite graph with a constant eigenvector on six vertices with largest eigenvalue 2 is $C_6$).

        Consider the graph $H_C$ obtained from $H$ by Seidel switching with respect to $C$. Then by the structure of $H$ as described above, the vertex $u$ must be a universal vertex. Let $G$ be the graph $(H_C)\setminus u$, so that $H=(\widehat G)_C$. We claim that $G$ is a regular non-trivially Hoffman colorable graph with ratio bound 3 and smallest eigenvalue $-2$, and that $C$ is a Hoffman color class of $G$.

        To show this, consider the coloring $\gamma$ of $G$ obtained from the Hoffman coloring of $H$ by exchanging $I$ for $C$ (which also shows that $\chi(G)\le \chi(H)$). We show that $\gamma$ is a Hoffman coloring.
        
        By the structure of $H$ with respect to the Hoffman coloring of $H$ as described above, now every color class of $\gamma$ is of size three, and every vertex of $G$ is adjacent to exactly two vertices of every color class of $\gamma$ other than its own. As a result, $G$ is $(2\chi(H)-2)$-regular.
    
        Furthermore, note that since $H\in \mathcal S_8$, also $G\in \mathcal S_8$, and hence $\y_{\min}(G)\ge -2$ by \thref{prop:SeidelNbh}. We now have $h(G)\ge 1-(2\chi(H)-2)/(-2)=\chi(H)$. Combined with the Hoffman bound \eqref{eq:Hoffman} we get $\chi(G) \ge h(G) \ge \chi(H)$. By the coloring $\gamma$ of $G$, we have to have equality throughout, and so $G$ is Hoffman colorable, $\gamma$ is a Hoffman coloring, and $C$ is a Hoffman color class of $G$. Now $G$ has ratio bound 3 and smallest eigenvalue $-2$ by \thref{prop:constantequitable}. Since $G$ and $H$ have the same chromatic number, and $H$ is not bipartite (because $H$ is non-trivially Hoffman colorable), also $G$ is not bipartite. Since the ratio bound of $G$ is not equal to $-\ymin(G)$, we know that $G$ is not regular complete multipartite. Thus, $\gamma$ is not a trivial Hoffman coloring and $G$ is non-trivially Hoffman colorable.
\end{itemize}
\item[(3)] We first prove the equivalence of (i) and (ii). First, assume (i); let $\psi : (\widehat {G_1})_{C_1} \to (\widehat{G_2})_{C_2}$ be a graph isomorphism. Since graph isomorphisms preserve Perron eigenvectors, by the structure of the Perron eigenvector as described in the proof of Part (1), we know that $\psi(u_1)=u_2$ (with $u_i$ the cone vertex in $\widehat {G_i}$) and $\psi(C_1)=\psi(C_2)$. Now $\psi$ induces a graph isomorphism $\varphi :G_1 \to G_2$ such that $\varphi(C_1)=C_2$. Conversely, in the same way an isomorphism $\varphi$ as in (ii) induces an isomorphism $\psi$ as in (i).

We now prove the equivalence of (ii) and (iii). By fixing a particular isomorphism $\varphi_0:G_1\to G_2$, we can assume without loss of generality that $G_1$ and $G_2$ are identical. Thus, let $G\in \mathcal G_3$ be non-trivially Hoffman colorable. It suffices to show that two Hoffman color classes $C_1$ and $C_2$ of $G$ lie in the same orbit under $\Aut(G)$ if and only if $G\setminus C_1 \cong G\setminus C_2$. Note that one direction is clear; if $\varphi(C_1)=C_2$ for some $\varphi\in \Aut(G)$, then $\varphi$ restricts to a graph isomorphism from $G\setminus C_1$ to $G\setminus C_2$. In particular, (ii) implies (iii). Moreover, if $n_1(G)$ is the number of $\Aut(G)$-orbits of Hoffman color classes of $G$, and $n_2(G)$ is the number of graphs of the from $G\setminus C$ up to isomorphism where $C$ is a Hoffman color class of $G$, then $n_1(G)\ge n_2(G)$. To show that (iii) implies (ii), it suffices to show that $n_1(G)=n_2(G)$ for each such $G$. We have computationally (using SageMath \cite{sage}) verified that this is indeed true.
    \end{itemize}
    This concludes the proof.
\end{proof}

\begin{rmk}\thlabel{rmk:G^_C}
    There is some redundancy in the five properties of the first part of \thref{lem:G^_C}. Indeed, by \thref{thm:excep>-2} the property that $H$ has smallest eigenvalue $-2$ follows from the other properties. Alternatively, the property that $H$ is exceptional can be weakened to the property that $H$ is connected; indeed, in the proof of Part (2) of \thref{lem:G^_C}, exceptionality of $H$ is only used to exclude a disconnected case. In other words, any connected irregular non-trivially Hoffman colorable generalized line graph in $\mathcal S_8$ must have $\ymin>-2$, and hence by \thref{thm:GL>-2} the only example of this is the graph from Figure~\ref{fig:lollipoptensor}.
\end{rmk}

\begin{rmk}\thlabel{rmk:type 3-4}
    As follows from the proof of \thref{lem:G^_C}, we know that $\mathfrak C(H)$ contains 3 and 4 for graphs $H\cong (\widehat G)_C$ from \thref{lem:G^_C}. It can be seen that in fact $\mathfrak C(H)=\{3,4\}$. Consider any Hoffman coloring of $H$, then by \thref{prop:univ} the color class containing the cone vertex $u$ of $\widehat G$ must be $C\cup \{u\}$, comprising a Hoffman color class of size 4. For the remaining color classes, note that $H\setminus (C\cup\{u\}) \cong G\setminus C$ is a regular graph with ratio bound 3 and smallest eigenvalue $-2$ by \thref{lem:G^_C}, \thref{prop:constantequitable}, and \thref{thm:Decomp}. The remaining color classes must therefore all be of size 3, and thus $\mathfrak C(H)=\{3,4\}$.
\end{rmk}

We are now ready to prove \thref{thm:typea}.

\begin{proof}[Proof of \thref{thm:typea}]
    We apply \thref{thm:excep>-2,lem:G^_C} (also see \thref{rmk:G^_C}); the graphs from \thref{thm:typea} are of the form $(\widehat G)_C$, with $G$ and $C$ as in \thref{lem:G^_C}. By the third part of \thref{lem:G^_C}, the graphs of this form correspond bijectively to the full lines in the diagram of Figure \ref{fig:HasseLayer2} above $C_6$. Since there are 35 such lines, there are 35 graphs satisfying the requirements of \thref{thm:typea}.

    We know from Figure~\ref{fig:HasseLayer2} that $G$ is a chromatic component of a graph $M$, where $M$ is isomorphic to $L(K_6)$, to the graph with identifier 181 ($M_{20}$), or to the graph with identifier 184 ($M_{24}$, the Schläfli graph). Then $C$ is a Hoffman color class of $M$ as well. From the proof of \thref{lem:G^_C} it is clear that $(\widehat G)_C$ is now a chromatic component of $(\widehat M)_C$. From Figure~\ref{fig:HasseLayer2}, it is clear that there are four such graphs $(\widehat M)_C$. These four maximal Hoffman colorable exceptional graphs $M_5$, $M_{22}$, $M_{23}$, and $M_{25}$ correspond to the four lines in  Figure~\ref{fig:HasseLayer2} from $L(K_6)$ to $L(CP(3))$, from 181 to 178, from 181 to 177, and from 184 to 183, respectively.
\end{proof}

See Figure \ref{fig:MHEtypea} for switching diagrams of $M_5$, $M_{22}$, $M_{23}$, and $M_{25}$.

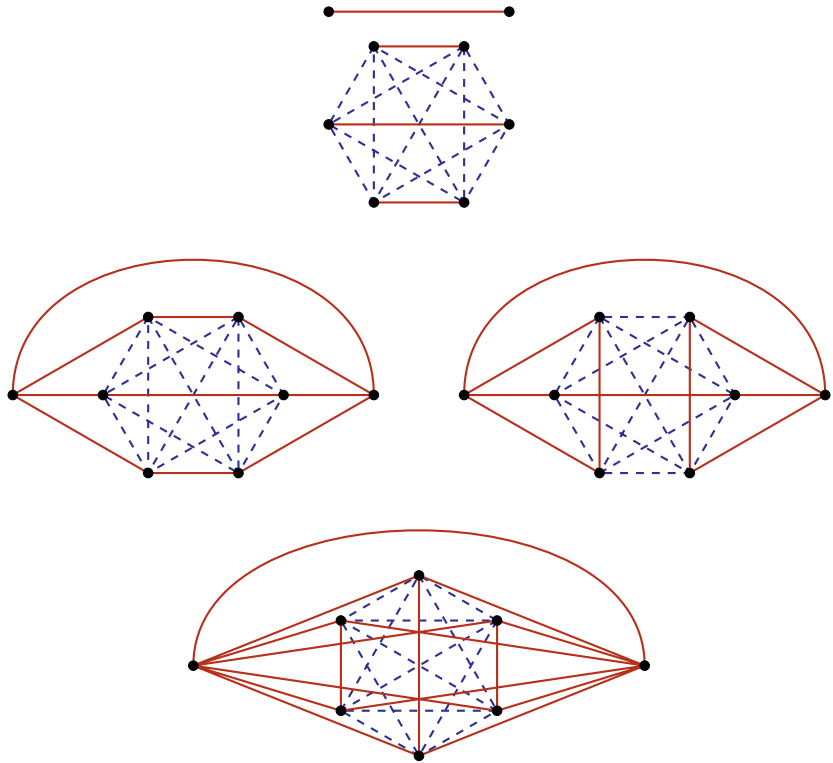
\begin{figure}[ht]
    \begin{center}
        \begin{tikzpicture}[scale=0.6]
            \coordinate (51) at (-2,8.5);
            \coordinate (52) at (2,8.5);
            \coordinate (54) at (-2,6);
            \coordinate (53) at (-1,7.73);
            \coordinate (56) at (1,7.73);
            \coordinate (57) at (2,6);
            \coordinate (58) at (1,4.27);
            \coordinate (55) at (-1,4.27);

            \draw[Blue,dashed,thick] (53) -- (54) -- (55) -- (56) -- (57) -- (58) -- (53) -- (55) -- (57) -- (53) (56) -- (54) -- (58) -- (56);
            \draw[BrickRed, thick] (53) -- (56) (57) -- (54) (55) -- (58) (52) -- (51);

            \filldraw[black] (51) circle (3pt);
            \filldraw[black] (52) circle (3pt);
            \filldraw[black] (53) circle (3pt);
            \filldraw[black] (54) circle (3pt);
            \filldraw[black] (55) circle (3pt);
            \filldraw[black] (56) circle (3pt);
            \filldraw[black] (57) circle (3pt);
            \filldraw[black] (58) circle (3pt);
            \coordinate (1) at (-9,0);
            \coordinate (2) at (-1,0);
            \coordinate (4) at (-7,0);
            \coordinate (3) at (-6,1.73);
            \coordinate (6) at (-4,1.73);
            \coordinate (7) at (-3,0);
            \coordinate (8) at (-4,-1.73);
            \coordinate (5) at (-6,-1.73);

            \draw[Blue,dashed,thick] (3) -- (4) -- (5) -- (6) -- (7) -- (8) -- (3) -- (5) -- (7) -- (3) (6) -- (4) -- (8) -- (6);
            \draw[BrickRed, thick] (1) -- (3) -- (6) -- (2) -- (7) -- (4) -- (1) -- (5) -- (8) -- (2) .. controls (-1,4) and (-9,4) .. (1);

            \filldraw[black] (1) circle (3pt);
            \filldraw[black] (2) circle (3pt);
            \filldraw[black] (3) circle (3pt);
            \filldraw[black] (4) circle (3pt);
            \filldraw[black] (5) circle (3pt);
            \filldraw[black] (6) circle (3pt);
            \filldraw[black] (7) circle (3pt);
            \filldraw[black] (8) circle (3pt);
            \coordinate (1a) at (1,0);
            \coordinate (2a) at (9,0);
            \coordinate (4a) at (3,0);
            \coordinate (3a) at (4,1.73);
            \coordinate (6a) at (6,1.73);
            \coordinate (7a) at (7,0);
            \coordinate (8a) at (6,-1.73);
            \coordinate (5a) at (4,-1.73);

            \draw[Blue,dashed,thick] (3a) -- (4a) -- (5a) -- (6a) -- (7a) -- (8a) -- (3a) (5a) -- (7a) -- (3a) -- (6a) -- (4a) -- (8a) -- (5a);
            \draw[BrickRed, thick] (1a) -- (3a) -- (5a) -- (1a) -- (4a) -- (7a) -- (2a) -- (6a) -- (8a) -- (2a) .. controls (9,4) and (1,4) .. (1a);

            \filldraw[black] (1a) circle (3pt);
            \filldraw[black] (2a) circle (3pt);
            \filldraw[black] (3a) circle (3pt);
            \filldraw[black] (4a) circle (3pt);
            \filldraw[black] (5a) circle (3pt);
            \filldraw[black] (6a) circle (3pt);
            \filldraw[black] (7a) circle (3pt);
            \filldraw[black] (8a) circle (3pt);

            \coordinate (S1) at (0,-4);
            \coordinate (S2) at (1.73,-7);
            \coordinate (S3) at (1.73,-5);
            \coordinate (S4) at (0,-8);
            \coordinate (S5) at (-1.73,-5);
            \coordinate (S6) at (-1.73,-7);
            \coordinate (S7) at (5,-6);
            \coordinate (S8) at (-5,-6);
            \draw[Blue,dashed,thick] (S1) -- (S2) (S3) -- (S4) -- (S5) (S6) -- (S1) -- (S3) -- (S5) -- (S1) (S4) -- (S6) -- (S2) -- (S4);
            \draw[Blue,dashed,thick] (S2) -- (S5);
            \draw[Blue,dashed,thick] (S3) -- (S6);
            \draw[BrickRed,thick] (S7) -- (S1) -- (S8) -- (S2) -- (S7) -- (S3) -- (S8) -- (S4) -- (S7) -- (S5) -- (S8) -- (S6) -- (S7) .. controls (5,-2) and (-5,-2) .. (S8) (S5) -- (S6) (S1) -- (S4) (S2) -- (S3);
            \filldraw[black] (S1) circle (3pt);
            \filldraw[black] (S2) circle (3pt);
            \filldraw[black] (S3) circle (3pt);
            \filldraw[black] (S4) circle (3pt);
            \filldraw[black] (S5) circle (3pt);
            \filldraw[black] (S6) circle (3pt);
            \filldraw[black] (S7) circle (3pt);
            \filldraw[black] (S8) circle (3pt);
        \end{tikzpicture}
    \end{center}
    \caption{Edge switching diagrams for $M_5$, $M_{22}$, $M_{23}$ and $M_{25}$.}
    \label{fig:MHEtypea}
\end{figure}

\subsection{Remaining graphs}\label{sec:typebc}

In this section, we determine the remaining non-trivially Hoffman colorable exceptional graphs, namely those that are neither cones, nor contained in $\mathcal S_8$. Recall from \thref{lem:cases} that these are necessarily induced subgraphs of maximal exceptional graphs of type (b) or (c). We aim to show the following.

\begin{thm}\thlabel{thm:typebc}
    There are exactly 36 non-trivially Hoffman colorable exceptional graphs that are neither cone graphs, nor contained in $\mathcal S_8$. Furthermore, each of these graphs is a chromatic component of $M_i$ for some $i\in \{1,2,3,4,6,7,8,9,10,21\}$.
\end{thm}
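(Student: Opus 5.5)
By \thref{lem:cases}, every graph in the statement of \thref{thm:typebc} is an induced subgraph of one of the $37+6=43$ maximal exceptional graphs of type (b) or (c), so the task reduces to a finite search inside these $43$ graphs. A naive enumeration of all induced subgraphs is too expensive, since some of these graphs have up to $36$ vertices. I would therefore use the structure given by the Decomposition Theorem (\thref{thm:Decomp}) and \eqref{eq:eigenvalues of connected component of dichromatic component}, together with \thref{thm:excep>-2}, which lets us assume $\ymin(G)=-2$.

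First I would pin down the possible dichromatic components. Every connected component $K$ of a dichromatic component of $G$ has $\ymax(K)=2$, so $K$ is a Smith graph $C_{2m}$, $W_n$, $\mathcal F_7$, $\mathcal F_8$ or $\mathcal F_9$. Moreover, the Perron vector $x$ of $G$ restricts to Perron vectors on each such component. It follows that the multiset of $x$-values on a color class determines which Smith graph, or disjoint union of Smith graphs with proportional eigenvectors, can join two color classes. Since $G$ is not a cone, \thref{prop:univ} rules out classes of size $1$.

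Next I would bound the color class sizes. A coclique in a graph with $\ymin=-2$ that is $E_8$-representable consists of pairwise orthogonal vectors, so it has size at most $8$. I would then organise the search by color classes, via the following algorithm for each of the $43$ maximal graphs $X$:
\begin{enumerate}
\item Enumerate all cocliques $I$ of $X$.
\item Build the compatibility graph whose vertices are these cocliques, joining two disjoint cocliques $I,J$ whenever $X[I\cup J]$ has all components Smith graphs with $\ymax=2$ and a common positive eigenvector.
\item Enumerate the cliques $\{I_1,\dots,I_m\}$ of this compatibility graph, with $m\ge 3$.
\item For each clique, take $H=X[\bigcup I_j]$ and test whether $\ymax(H)=2(m-1)$ and $\ymin(H)=-2$. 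Since the Hoffman bound is then $m$ and $\{I_j\}$ is an $m$-coloring, this test certifies that $H$ is Hoffman colorable.
\item Discard $H$ if it is disconnected, a generalized line graph (using the 31 forbidden subgraphs), a cone, or in $\mathcal S_8$ (using \thref{prop:SeidelNbh}: test whether $\widehat H$ is $E_8$-representable, or equivalently compare against the known lists).
\end{enumerate}
Conversely, every graph in the statement arises this way. Its Hoffman coloring consists of cocliques of $X$, and by \thref{thm:Decomp} every pair of classes is compatible, so every pair is an edge of the compatibility graph and the colouring is a clique.

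Finally, I would reduce the resulting graphs up to isomorphism and check that there are exactly $36$ of them. For the maximality claim, I would compute which of them are chromatic components of others (a sub-clique gives a chromatic component), identify the maximal ones, and match them with $M_1,\dots,M_4,M_6,\dots,M_{10},M_{21}$ via their spectra and color class sizes from Tables~\ref{tab:mhe1} and~\ref{tab:mhe2}.

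I expect the main obstacle to be computational feasibility. The type (b) graphs are large and have many cocliques, so I would prune with the Perron-vector constraint: all classes must have the same total $x$-weight pattern prescribed by the Smith graphs. I would also use the $E_7$ observation from the introduction. If the vertex of degree $28$ is removed, the rest lies in an $E_7$-like structure, which suggests first classifying the $39$ maximal $E_7$-representable graphs and running the search inside them, since they are much smaller, before handling the type (c) graphs directly.
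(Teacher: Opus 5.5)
Your argument is correct, but it is organised differently from the paper's proof.

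\textbf{How your route differs.} You search \emph{all} cocliques of the 43 type (b)/(c) maximal exceptional graphs, which by \thref{lem:cases} already contain every graph in question. Your compatibility relation is exactly non-adjacency in the paper's graph $M(G)$; the ``common positive eigenvector'' clause is automatic once every component has $\lambda_{\max}=2$. The difference is that you apply it to all cocliques, whereas the paper uses only \emph{maximal} cocliques of the host.

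\textbf{What the paper needs for its restriction.} Restricting to maximal cocliques is justified only when $H\sqcup K_1$ is not an induced subgraph of the host (\thref{lem:irregularinduced}). To guarantee such a host, the paper needs \thref{lem:induced} and the classification of maximal $E_7$-representable graphs (\thref{prop:maxe7}), and it runs the search on the 11 extra hosts coming from that classification.

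\textbf{The second assertion.} The paper notes that the only non-Hoffman colorable outputs came from cocliques of size 3 in $M(G)$. This lets it look only at maximal cocliques of $M(G)$. You instead compute the chromatic-component order on the complete output. That works because the relation is transitive: an optimal coloring of an intermediate component can be spliced with the remaining classes to give an optimal coloring of the larger graph.

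\textbf{Trade-off.} Your route is more self-contained and needs neither \thref{lem:irregularinduced} nor the $E_7$ classification. The paper's route buys a much smaller search space, plus the $E_7$ classification as a byproduct. On your side, the feasibility of enumerating all cocliques of graphs with up to 36 vertices, and all cliques among them, is left unverified.

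\textbf{Two smaller points.}

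First, your closing $E_7$ suggestion is misdirected. Removing a vertex of degree 28 does not place the rest inside $E_7$, since only that vertex's non-neighbours are represented orthogonally to its vector. If the $E_7$ search is meant to replace part of the search over the 43 hosts, it would lose graphs. For instance, $M_{10}$ and $M_{21}$ have $\mathrm{rank}(A+2I)=8$ by Table~\ref{tab:mhe2}, so they are not $E_7$-representable. In your approach the $E_7$ graphs are simply unnecessary; in the paper they appear only because of the restriction to maximal cocliques.

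Second, spectra and color class sizes do not distinguish $M_7,M_8,M_9$. The final identification should therefore be made by isomorphism testing against the representations in Section~\ref{sec:representations}.
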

To prove \thref{thm:typebc}, we have an approach that is similar to the one for the regular exceptional graphs in Section \ref{sec:regexcepcone}. In particular, we use the following definition, which is the equivalent of the Hoffman coclique graph $HC(G)$ from Section \ref{sec:regexcepcone} in this setting. For a non-empty graph $G$, let $M(G)$ be the graph with the maximal cocliques of $G$ as vertices, such that two maximal cocliques $C$ and $C'$ are adjacent whenever either they intersect or the induced subgraph $G[C \sqcup C']$ has a connected component $K$ with $\y_{\min}(K) > \y_{\min}(G)$. We have the following lemma, which can be seen as the equivalent of \thref{lem:regularinduced} in this setting.

\begin{lem}\thlabel{lem:irregularinduced}
    Let $G$ be a non-empty graph. If $H$ is a connected induced subgraph of $G$ such that $\y_{\min}(G)=\y_{\min}(H)$ and such that $H\sqcup K_1$ is not an induced subgraph of $G$, then any Hoffman coloring of $H$ induces a coclique of size at least two in $M(G)$.
\end{lem}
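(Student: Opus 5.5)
The plan is to show that the colour classes $C_1,\dots,C_m$ of a Hoffman colouring of $H$ are themselves maximal cocliques of $G$, pairwise non-adjacent in $M(G)$. Then $\{C_1,\dots,C_m\}$ is the required coclique of $M(G)$. Since the Hoffman bound is only defined for non-empty graphs, $H$ is connected with at least one edge, so $m=\chi(H)\ge 2$. Because $H$ is connected, it has a positive (Perron) eigenvector, so \thref{thm:Decomp}, \thref{prop:univ} and \eqref{eq:eigenvalues of connected component of dichromatic component} all apply to $H$.

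\emph{Step 1: each $C_i$ is a maximal coclique of $G$.} Every $C_i$ is a coclique of $G$ because $H$ is induced.

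For a vertex $w\in V(H)\setminus C_i$, \thref{prop:univ} says $w$ is adjacent to some vertex of $C_i$, so $C_i$ cannot be extended inside $H$.

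Now let $v\in V(G)\setminus V(H)$ and suppose $v$ has no neighbour in $C_i$. Put $H'=G[V(H)\cup\{v\}]$. Since $H\sqcup K_1$ is not an induced subgraph of $G$, $v$ has a neighbour in $H$. Hence $H'$ is connected and $H$ is a proper subgraph of it, so $\ymax(H')>\ymax(H)$ by \thref{prop:subgraph}. By the same proposition, $\ymin(G)\le\ymin(H')\le\ymin(H)=\ymin(G)$, so $\ymin(H')=\ymin(H)<0$. Therefore
\[h(H')=1-\frac{\ymax(H')}{\ymin(H)}>h(H)=m.\]
On the other hand, adding $v$ to the class $C_i$ gives an $m$-colouring of $H'$, so $\chi(H')\le m<h(H')$. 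This contradicts the Hoffman bound, so $v$ must have a neighbour in $C_i$, and $C_i$ is maximal in $G$.

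\emph{Step 2: the $C_i$ are pairwise non-adjacent vertices of $M(G)$.} The classes $C_i$ and $C_j$ are disjoint, so they are distinct vertices of $M(G)$ and they do not intersect. The graph $G[C_i\sqcup C_j]$ is a dichromatic component of $H$. By \eqref{eq:eigenvalues of connected component of dichromatic component}, which rests on \thref{thm:Decomp}, every connected component $K$ of this component satisfies $\ymin(K)=\ymin(H)=\ymin(G)$. So no component has smallest eigenvalue exceeding $\ymin(G)$, and $C_i$, $C_j$ are non-adjacent in $M(G)$. This gives a coclique of size $m\ge 2$.

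The main obstacle is Step 1 for vertices outside $H$. This is the only place where the hypothesis that $H\sqcup K_1$ is not induced in $G$ enters: it guarantees $v$ has a neighbour in $H$, which is exactly what is needed for the strict Perron–Frobenius increase of $\ymax$.
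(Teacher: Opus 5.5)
Your proof is correct and follows essentially the same route as the paper. Each colour class is maximal inside $H$ by \thref{prop:univ}. It is also maximal in $G$, because adding an outside vertex $v$ (which has a neighbour in $H$ by the $H\sqcup K_1$ hypothesis) strictly raises $\ymax$ while keeping $\ymin$ fixed, so the Hoffman bound exceeds $\chi(H)$. Distinct classes are non-adjacent in $M(G)$ by \eqref{eq:eigenvalues of connected component of dichromatic component}. The only difference is cosmetic: you phrase the middle step as a contradiction with an explicit $m$-colouring of $H'$, whereas the paper states it directly as $\chi(H_v)>\chi(H)$.
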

\begin{proof}
    Let $X$ be a set of vertices of $G$ with $G[X]=H$. By \thref{prop:univ} we know that every Hoffman color class of $H$ is a maximal coclique in $H$. For $v\in V(G)\setminus X$, write $H_v=G[X \sqcup \{v\}]$. To show that every Hoffman color class of $H$ is not only a maximal coclique in $H$, but also in $G$, it is sufficient to prove that $\chi(H_v)>\chi(H)$ for each such $v$. Indeed, if $\chi(H_v)>\chi(H)$, then $v$ must have a neighbor in every color class of every optimal coloring of $H$, and hence no Hoffman color class of $H$ can be extended to a larger coclique by adding $v$.

    To show $\chi(H_v)>\chi(H)$, we show that $h(H_v)>h(H)$; this is sufficient since $H$ is Hoffman colorable. Note that by \thref{prop:subgraph} we have $\y_{\min}(G) \le \y_{\min}(H_v) \le \y_{\min}(H)$ and by the assumption that $\y_{\min}(H)=\y_{\min}(G)$ we have $\y_{\min}(H)=\y_{\min}(H_v)$. Now $h(H_v)>h(H)$ is equivalent to $\y_{\max}(H_v)>\y_{\max}(H)$. Since $H\sqcup K_1$ is not an induced subgraph of $G$ and $H$ is connected, we know that $H_v$ is connected. By \thref{prop:subgraph}, every Hoffman color class of $H$ is now a maximal coclique of $G$.

    Now consider a Hoffman coloring of $H$, and let $C$ and $C'$ be two of the color classes in this coloring. By the above, $C$ and $C'$ are vertices in $M(G)$. We claim that they are not adjacent in $M(G)$. First, note that $C$ and $C'$ are disjoint. Next, let $K$ be a connected component of $G[C\sqcup C']=H[C\sqcup C']$, then by \eqref{eq:eigenvalues of connected component of dichromatic component} $\ymin(K)=\ymin(G)$. Now $C$ and $C'$ are not adjacent in $M(G)$, concluding the proof.
\end{proof}

Comparing \thref{lem:irregularinduced} to \thref{lem:regularinduced}, we can make the following remarks. First, if $G$ is a regular graph, then the Hoffman coclique graph $HC(G)$ is an induced subgraph of $M(G)$: any Hoffman coclique of $G$ is maximal by \thref{thm:ratiobound}, and if $C$ and $C'$ are disjoint Hoffman cocliques, then the induced subgraph $G[C \sqcup C']$ is $(-\y_{\min}(G))$-regular by \thref{thm:ratiobound}, and so every connected component $K$ of $G[C\sqcup C']$ is $(-\y_{\min}(G))$-regular, and since $K$ is bipartite we have $\y_{\min}(K)=\y_{\min}(G)$. In particular, note that the adjacency condition of $M(G)$ involving connected components $K$ is vacuous in the regular case, so it is not needed in the definition of the Hoffman coclique graph $HC(G)$ in Section \ref{sec:regexcepcone}.

Secondly, note that there is not a bijective correspondence in \thref{lem:irregularinduced} as in \thref{lem:regularinduced} (also see Figure \ref{fig:nonHC}). Hence, we can also not include a statement like the second half of \thref{lem:regularinduced} about chromatic components and maximal cocliques of $HC(G)$. However, as we will see in the proof of \thref{thm:typebc}, in our setting we can still justify restricting to maximal cocliques of $M(G)$ by using a different argument.

\begin{figure}[ht]
    \begin{center}
        \begin{tikzpicture}[scale=0.4]
            \coordinate (1) at (-12,0);
            \coordinate (2) at (-12,2);
            \coordinate (3) at (-12,-2);
            \coordinate (4) at (-14,4);
            \coordinate (5) at (-10,4);
            \coordinate (6) at (-13,-1);
            \coordinate (7) at (-13,1);
            \coordinate (8) at (-13,3);
            \coordinate (9) at (-15,-3);
            \coordinate (10) at (-9,-3);
            \coordinate (11) at (-11,3);
            \coordinate (12) at (-11,1);
            \coordinate (13) at (-11,-1);

            \coordinate (1b) at (0,2);
            \coordinate (2b) at (0,4);
            \coordinate (3b) at (0,0.5);
            \coordinate (4b) at (-3,2.5);
            \coordinate (5b) at (-1,3);
            \coordinate (6b) at (1,3);
            \coordinate (7b) at (3,2.5);
            \coordinate (8b) at (0,-0.5,1);
            \coordinate (9b) at (0,-0.5,-1);
            \coordinate (10b) at (-1,-2.5,1);
            \coordinate (11b) at (-1,-2.5,-1);
            \coordinate (12b) at (1,-2.5,-1);
            \coordinate (13b) at (1,-2.5,1);

            \draw[gray, thick] (6) -- (8) -- (9) -- (10) -- (11) -- (13) -- (6) -- (12) -- (8) -- (5) -- (13) -- (7) -- (11) -- (4) -- (6) -- (3) -- (13) (9) -- (3) -- (10);
            \draw[gray, thick] (8b) -- (9b) (5b) -- (6b) -- (7b) -- (2b) -- (4b) -- (10b) -- (11b) -- (12b) -- (13b) -- (10b) -- (8b) -- (3b) -- (9b) -- (1b) -- (5b) -- (4b) -- (11b) -- (9b) -- (12b) -- (7b) -- (13b) -- (8b) -- (1b) -- (6b);
            \filldraw[ForestGreen] (1) circle (4 pt) (2) circle (4pt) (3) circle (4pt) (4) circle (4pt) (5) circle (4pt) (1b) circle (4pt) (2b) circle (4pt) (3b) circle (4pt) (10b) circle (4pt) (12b) circle (4pt);
            \filldraw[Blue] (6) circle (4pt) (8) circle (4pt) (10) circle (4pt) (12) circle (4pt) (5b) circle (4pt) (7b) circle (4pt) (8b) circle (4pt) (11b) circle (4pt);
            \filldraw[BrickRed] (7) circle (4pt) (9) circle (4pt) (11) circle (4pt) (13) circle (4pt) (6b) circle (4pt) (4b) circle (4pt) (9b) circle (4pt) (13b) circle (4pt);
        \end{tikzpicture}
        \caption{Two graphs with smallest eigenvalue $-2$ which, even though all dichromatic components are Smith graphs (namely $C_8$ and $\mathcal F_9$), are not Hoffman colorable. This is because the Perron eigenvectors of the two graphs do not restrict to Perron eigenvectors of their dichromatic components.}
        \label{fig:nonHC}
    \end{center}
\end{figure}
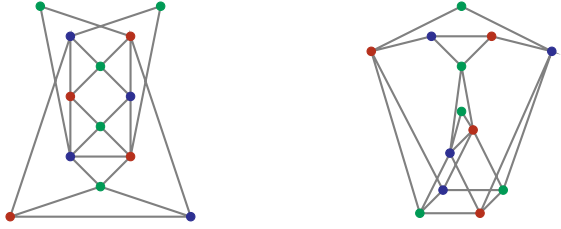

We have the following result, compare \thref{lem:regexcep1}.

\begin{lem}\thlabel{lem:induced}
    Let $H$ be a non-trivially Hoffman colorable exceptional graph that is neither a cone graph, nor contained in $\mathcal S_8$. Then there exists an exceptional graph $G$ such that the following hold:
    \begin{itemize}
        \item $H$ is an induced subgraph of $G$,
        \item $H\sqcup K_1$ is not an induced subgraph of $G$,
        \item $G$ is a maximal exceptional graph of type (b) or (c) or $G$ is maximal $E_7$-representable graph that is neither a cone, nor an element of $\mathcal S_8$.
    \end{itemize}
\end{lem}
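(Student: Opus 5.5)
We argue via the $E_8$-representation and a case split on whether $H\sqcup K_1$ can be embedded in the maximal exceptional graph above $H$. Since $H$ is exceptional, fix an $E_8$-representation $f$ of $H$ and a maximal exceptional graph $G_0$ containing $H$ as an induced subgraph. By \thref{lem:cases} and the hypotheses that $H$ is neither a cone nor in $\mathcal S_8$, we may take $G_0$ of type (b) or (c). Indeed, if every maximal exceptional graph containing $H$ were of type (a), the proof of \thref{lem:cases} would force $H$ to be a cone or to lie in $\mathcal S_8$.

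\textbf{Case 1: $H\sqcup K_1$ is not an induced subgraph of $G_0$.} Then $G=G_0$ satisfies all three conditions and we are done.

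\textbf{Case 2: $H\sqcup K_1$ is an induced subgraph of $G_0$.} In an $E_8$-representation, an isolated extra vertex means there is a vector $w\in E_8$ orthogonal to all of $f(V(H))$. The representation of $H$ then lies in $w^\perp\cap E_8\cong E_7$, so $H$ is $E_7$-representable. In this situation we pass to a maximal $E_7$-representable graph: take $G$ maximal with respect to induced subgraphs among $E_7$-representable graphs containing $H$, with representation extending $f$ inside $w^\perp$. We must then verify three things.
\begin{enumerate}
\item $G$ is exceptional. This holds because it contains the exceptional $H$ and is connected, since $E_7$-representations of maximal graphs are connected: any vector of $E_7$ is adjacent or non-orthogonal to something, and by maximality one can add vertices linking components.
\item $H\sqcup K_1$ is not induced in $G$. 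If some vertex $v$ of $G$ were orthogonal to all of $f(V(H))$, we would like a contradiction with maximality or with the rank of $H$. The natural route is a dimension count. If the span of $f(V(H))$ has rank 7, no nonzero vector of $w^\perp$ is orthogonal to it, so the isolated vertex cannot occur. Hence we should show that non-trivially Hoffman colorable $H$ here has full rank in $E_7$, or else iterate: if $H$ is orthogonal to two orthogonal roots, it lies in $D_6$. The point is that $D_6$-representable graphs are generalized line graphs, since $D_n$ representations give generalized line graphs, which contradicts exceptionality. If instead the two roots $w,v$ orthogonal to $H$ are not orthogonal to each other, replace $w$ by a suitable root combination to reduce to the orthogonal case.
\item $G$ is neither a cone nor in $\mathcal S_8$. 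A cone over something containing $H$ would force $H$ to be a cone or in $\mathcal S_8$ via \thref{prop:SeidelNbh}, but this is not automatic. If the maximal $E_7$-representable $G$ were a cone or in $\mathcal S_8$, we should instead choose, among all maximal $E_7$-representable graphs containing $H$ (varying the extension), one that is not. Failing that, note that when $G$ is a cone with cone vector $u$, $H$ avoids $u$ (else $H$ is a cone), so $f(H)$ lies in the neighbourhood of $u$; via the proof of \thref{prop:SeidelNbh} this makes $H\in\mathcal S_8$, a contradiction. Membership of $G$ in $\mathcal S_8$ similarly passes to induced subgraphs, giving $H\in\mathcal S_8$, a contradiction.
\end{enumerate}

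The main obstacle is step 2: ensuring that the maximal $E_7$-representable graph does not again contain an isolated vertex beside $H$. This requires the rank argument, namely that being orthogonal to two roots within $E_8$ forces a $D_6$ (hence generalized line graph) representation. Here one uses the fact that the orthogonal complement of $A_1\times A_1$ in $E_8$ is $D_6$, together with a check that two non-orthogonal roots in $w^\perp$ can be replaced by orthogonal ones spanning the same $A_2$/$A_1$ configuration.
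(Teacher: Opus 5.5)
Your proof follows the paper's argument. A maximal exceptional graph above $H$ must be of type (b) or (c) by the reasoning of \thref{lem:cases}. If $H\sqcup K_1$ embeds in it, then $H$ is $E_7$-representable and you pass to a maximal $E_7$-representable graph. There, a further vertex orthogonal to all of $H$ would make $H$ $D_6$-representable, hence a generalized line graph.

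Two parts of your write-up need repair, though neither touches this core.

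First, the ``main obstacle'' in your step 2 does not arise. The representation of $G$ lies in $w^\perp$, so the root $v$ of any vertex of $G$ orthogonal to $H$ is automatically orthogonal to $w$. The $D_6$ argument therefore applies at once, and no rank count is needed. This matters, because your fallback for a non-orthogonal pair would fail. Two non-orthogonal roots $v\neq\pm w$ span an $A_2$, whose orthogonal complement in $E_8$ is $E_6$. Some exceptional graphs (e.g.\ those on six vertices with $\lambda_{\min}>-2$) are $E_6$-representable, so no contradiction would follow.

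Second, your connectivity claim in step 1 is not justified by the appeal to maximality. Derive it from step 2 instead: once $H\sqcup K_1$ is not induced in $G$, every vertex outside $H$ has a neighbour in the connected graph $H$, so $G$ is connected.

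Your step 3 is correct. A cone vertex of $G$ either lies in $H$, making $H$ a cone, or does not, in which case $\widehat H$ is induced in $G$ and $H\in\mathcal S_8$ by \thref{prop:SeidelNbh}. Together with closure of $\mathcal S_8$ under induced subgraphs, this spells out a check the paper leaves implicit.
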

\begin{proof}
    Let $H$ be a non-trivially Hoffman colorable exceptional graph. Let $G'$ be a maximal exceptional graph that contains $H$ as an induced subgraph. We distinguish two cases. If $H\sqcup K_1$ is not an induced subgraph of $G'$, then we can take $G=G'$; since $H$ is neither a cone graph nor an element of $\mathcal S_8$, we know that $G'$ must be of type (b) or (c) by \thref{lem:cases}.

    If instead $H\sqcup K_1$ is an induced subgraph of $G'$, then in particular $H\sqcup K_1$ is $E_8$-representable. We can now represent $H$ in $E_8$ only using vectors orthogonal to a given vector (namely the vector representing $K_1$). Hence $H$ is $E_7$-representable (see Section~\ref{sec:preliminaries}), so that $H$ is an induced subgraph of a maximal $E_7$-representable graph $G''$. Again, if $H\sqcup K_1$ is not an induced subgraph of $G''$, then we take $G=G''$.

    If $H\sqcup K_1$ is also an induced subgraph of $G''$, then we can represent $H$ in $E_8$ only using vectors that are orthogonal to two given vectors that are orthogonal to each other. Hence $H$ is $D_6$-representable (see Section \ref{sec:preliminaries}). By \cite[Theorem 3.6.3]{-2} $H$ is a generalized line graph, contradicting the assumption that $H$ is exceptional.
\end{proof}

There are 37 maximal exceptional graphs of type (b), and 6 maximal exceptional graphs of type (c). These graphs are given in \cite[Chapter 6]{-2}. For the maximal $E_7$-representable graphs, we have the following result.

\begin{prop}\thlabel{prop:maxe7}
    There are exactly 39 maximal $E_7$-representable graphs. This includes the Schläfli graph (see \thref{exa:Schlaefli}), and 27 cone graphs. None of the remaining 11 graphs is an element of $\mathcal S_8$.
\end{prop}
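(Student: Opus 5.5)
The plan is to mirror the classification of maximal exceptional graphs in \cite[Chapter 6]{-2}, but inside $E_7$, and to finish with a computer search. Fix a vector $v\in E_8$. Then $E_7$ is the set of the $126$ vectors of $E_8$ orthogonal to $v$. An $E_7$-representable graph is the same as a graph admitting an $E_8$-representation whose image lies in $v^\perp$. Such a graph is maximal exactly when its representation cannot be extended by a further vector of $v^\perp$ that has inner product $0$ or $4$ with every vector already used. This holds for every representation of the graph, not merely for the chosen one.

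\textbf{Reduction to a finite search.} Since $E_7$ has $126$ vectors, $63$ antipodal pairs, and a representation never uses both $u$ and $-u$, every $E_7$-representable graph has at most $63$ vertices. To organize the search I will use the stars of $E_7$. The $E_7$-vectors at $60^\circ$ to a fixed $w\in E_7$ form a set of $32$ vectors (the roots $\alpha$ with $\langle\alpha,w\rangle=4$ in our normalization). So any $E_7$-representable graph containing a vertex of degree $32$ is a cone over an $E_7$-representable graph lying in the $D_6$-part orthogonal to both $v$ and $w$. Such a graph is a generalized line graph by \cite[Theorem 3.6.3]{-2}. This parallels the role of type (a), where degree $28$ gives cones. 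I will therefore first enumerate the maximal ones that are cones, and then handle the non-cones.

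\textbf{Generation.} I will generate all maximal sets $S\subseteq E_7$ with pairwise inner products in $\{0,4\}$, up to the action of the Weyl group $W(E_7)$. This is a clique search in the graph on $126$ vertices whose edges join vectors with inner product $0$ or $4$. Isomorphism classes of the resulting representations are then reduced by graph isomorphism, with care taken for graphs that admit several inequivalent representations. A graph is maximal if at least one of its representations is maximal and the graph is not an induced subgraph of another representable graph. I will implement this in SageMath, as elsewhere in the paper. The expected output is $39$ graphs. The Schläfli graph must appear among them: it is the graph on the $27$ vectors of $E_7$ at $60^\circ$ to a fixed vector outside the span, which is the $E_6$-type configuration. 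I will also count which of the outputs have a vertex adjacent to all others, and this should give $27$.

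\textbf{Checking the $\mathcal S_8$ claim.} For the remaining $11$ graphs I will use \thref{prop:SeidelNbh}. A graph $G$ lies in $\mathcal S_8$ if and only if $\widehat G$ is $E_8$-representable. So for each of the $11$ graphs I will run a search for an extension of some $E_8$-representation of $G$ by a vector making inner product $4$ with all of its images. Alternatively, I will test whether some Seidel switch of $G$ is the line graph of a graph on $8$ vertices, checking switching classes via line-graph recognition. A non-existence result for each of the $11$ graphs proves the last sentence of the statement.

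\textbf{Main obstacle.} The hardest step is the correctness of maximality with respect to induced subgraphs. A graph may have several non-equivalent $E_7$-representations, and maximality must hold for the graph itself, not just for one representation. I would first compute, for each candidate, all its $E_7$-representations up to $W(E_7)$. Then I would verify that none of them extends, and that no candidate is an induced subgraph of another candidate. The number of graphs with more than one representation should be small, so this can also be checked by direct computation.
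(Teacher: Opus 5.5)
Your core route matches the paper's proof: an exhaustive computer search for maximal sets of $E_7$-roots with pairwise inner products in $\{0,4\}$, reduction up to isomorphism, a check that none of the 39 resulting graphs is an induced subgraph of another, and \thref{prop:SeidelNbh} for the last sentence. The paper phrases the search as maximal cocliques in the graph joining roots with negative inner product.

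There are two small differences in the search itself.
\begin{itemize}
\item The paper prunes the search concretely. It notes that $K_8$ is not $E_7$-representable, since $A+2I$ would have rank $8>7$, and that $K_7$ is not maximal. By symmetry it then fixes two orthogonal roots $c_{12},c_{34}$. Your ``up to $W(E_7)$'' is an implementation variant of this.
\item Your extra pass computing all representations of each candidate is redundant. If some representation of a candidate extended, the candidate would be a proper induced subgraph of another candidate, and your containment check already catches that.
\end{itemize}

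\textbf{Delete the ``Reduction to a finite search'' paragraph; it contains a false claim.} The 32 roots at $60^\circ$ to $w\in E_7$ split into 16 pairs $\{u,w-u\}$ with $\langle u,w-u\rangle=-4$. A representation uses at most one root from each pair, so every vertex of an $E_7$-representable graph has degree at most $16$, never $32$. Moreover, the neighbours of a cone vertex $w$ are at $60^\circ$ to $w$, not orthogonal to it. So the base of a cone does not lie in the $D_6$ orthogonal to $v$ and $w$, and need not be a generalized line graph. In the paper, the 27 maximal cones have 17 vertices. Each base is a graph switching-equivalent to $L(K_{4,4})$, represented by vectors $a_{ij},b_{ij}$ with $i\le 4<j$. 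Your generation step is an exhaustive clique search, so nothing downstream depends on this paragraph, but as written it is wrong.

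\textbf{For the $\mathcal S_8$ part, your tests are valid but heavier than needed.} You propose searching for an $E_8$-representation of $\widehat G$, or scanning the switching class of $G$ for line graphs of graphs on eight vertices. The paper instead just computes $\lambda_{\min}(\widehat G)<-2$ for each of the 11 graphs. That already rules out $E_8$-representability of $\widehat G$, hence $G\notin\mathcal S_8$ by \thref{prop:SeidelNbh}. If you do go via representations, you must exhaust all $E_8$-representations of $G$ up to symmetry, not extend just one. The cleaner way is to fix the cone vertex at $e$ and search for $G$ among the 56 vectors $a_{ij},b_{ij}$.
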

For $E_8$-representations of these 39 graphs, we refer to Section~\ref{sec:representations}. Since the Schläfli graph is an element of $\mathcal S_8$ (see \thref{exa:Schlaefli}), this leaves 11 graphs to apply \thref{lem:induced} to.
\begin{proof}
Recall from Section \ref{sec:preliminaries} that $E_7$ can be defined as the set of 56 vectors of type $c$ and the 70 vectors of type $d$.

Note that for an $E_7$-representable graph with adjacency matrix $A$, the rank of $A+2I$ is at most 7, since $A+2I$ must be a scalar multiple of the Gram matrix of such a representation and $E_7$ spans a 7-dimensional space. This implies that the complete graph on eight vertices cannot be represented in $E_7$. However, the complete graph on seven vertices can be represented in $E_7$ (for example using the vectors $c_{12},c_{13},\dots,c_{18}$), but this is not maximal, as including the vector $d_{5678}$ we get an $E_7$-representation of a graph that contains the complete graph on seven vertices as an induced subgraph.

Hence, every maximal $E_7$-representable graph is not complete, and by using the symmetries of $E_7$ \cite[Proposition 3.2.4]{-2}, we can assume without loss of generality that the vectors $c_{12}$ and $c_{34}$ (which represent two non-adjacent vertices) are used in the $E_7$-representation of such a graph.

Write $G$ for the $E_7$-compatibility graph, where the vertices are all $E_7$-vectors (so the 56 vectors of type $c$ and the 70 vectors of type $d$, see Section \ref{sec:preliminaries}), and two vectors are adjacent whenever their dot product is negative. Then every $E_7$-representation of any graph corresponds to a coclique in $G$. To determine the maximal $E_7$-representable graphs, we now need to determine the maximal cocliques in $G$, where we can restrict to those cocliques that contain the vectors $c_{12}$ and $c_{34}$. To do this, using SageMath \cite{sage} we determine the maximal cocliques in the induced subgraph of $G$ on the set of $E_7$-vectors that are non-adjacent to $c_{12}$ and $c_{34}$.

We obtain, up to isomorphism, 39 graphs, and one can check that none of these is an induced subgraph of another. Among the 39 graphs, we find the Schläfli graph, and 27 cone graphs. The cones over the remaining 11 graphs all have smallest eigenvalue strictly less than $-2$, so they are not an element of $\mathcal S_8$ by \thref{prop:SeidelNbh}, concluding the proof.
\end{proof}

We are now ready to prove \thref{thm:typebc}, for which we use Algorithm~\ref{algo:bc}.

\begin{algorithm}[ht]
    \caption{Algorithm for computing Hoffman colorable induced subgraphs, see \thref{lem:induced}.}
    \label{algo:bc}
    \KwIn{a graph $G$;}
    \KwOut{a list \emph{HoffmanColorable} such that \emph{HoffmanColorable}[c] is a list of Hoffman $c$-colorable induced subgraphs of $G$ and a list \emph{nonHoffmanColorable} such that \emph{nonHoffmanColorable}[c] is a list of induced subgraphs of $G$ with a non-Hoffman $c$-coloring from a coclique in $M(G)$;}
    \nlset{1}compute the maximal cocliques \emph{maxcoc} of $G$ and initialize $M(G)$ to be the empty graph with vertices \emph{maxcoc}\;
    \nlset{2}\For{every pair of maximal cocliques $C,C'$ of $G$}{\nlset{3}\If{$C$ and $C'$ are not disjoint}{\nlset{4}make $C$ and $C'$ adjacent in $M(G)$\;}\nlset{5}\Else{\For{every connected component $K$ of $G[C \cup C']$}{\nlset{6}check whether $\y_{\min}(K)=\ymin(G)$ by evaluating the characteristic polynomial of $K$ in $\y_{\min}(G)$ (by \thref{prop:subgraph} we know that $\ymin(K)\ge \ymin(G)$, so we only have to check whether $\ymin(G)$ is an eigenvalue of $K$)\;}\nlset{7}make $C$ and $C'$ adjacent in $M(G)$ if at least one connected component $K$ does not satisfy $\y_{\min}(K) = \y_{\min}(G)$\;}}
    \nlset{8}initialize \emph{HoffmanColorable} and \emph{nonHoffmanColorable} to be lists of empty lists of length at least the size of the largest coclique in $M(G)$\;
    \nlset{9}\For{every coclique \emph{coc} in $M(G)$ of size at least 2}{\nlset{10}set $c$ to be the cardinality of \emph{coc}\;\nlset{11}let $H$ be the induced subgraph of $G$ on the set of vertices formed by the union of the maximal cocliques listed in \emph{coc}, and check if it isomorphic to any of the graphs in \emph{HoffmanColorable}[$c$] or \emph{nonHoffmanColorable}[$c$]\;\nlset{12}\If{$\y_{\max}(H)=-\y_{\min}(G)\cdot (c -1)$}{\nlset{13}the coloring is a Hoffman coloring; append $H$ to \emph{HoffmanColorable}[$c$]\;}\nlset{14}\Else{\nlset{13}the coloring is not a Hoffman coloring; append $H$ to \emph{nonHoffmanColorable}[$c$]\;}}
    \KwRet{the lists HoffmanColorable and nonHoffmanColorable.}
\end{algorithm}

\begin{proof}[Proof of \thref{thm:typebc}]
    By \thref{lem:irregularinduced,lem:induced}, we apply Algorithm \ref{algo:bc} to the 37 maximal exceptional graphs of type (b), the 6 maximal exceptional graphs of type (c), and the 11 maximal $E_7$-representable graphs from \thref{prop:maxe7}. We check whether the resulting Hoffman colorable graphs are generalized line graphs (by checking if they contain any of 31 forbidden induced subgraphs, see \cite[Theorem 2.3.18]{-2}), or trivially Hoffman-colorable, or are one of the graphs from \thref{thm:regexcepcone,thm:typea}, and filter them out. Exactly 36 graphs remain.

    For the maximal ones, we note the following. The application of the algorithm as above also results in two non-Hoffman colorable graphs that still correspond to a coclique in $M(G)$ for one maximal exceptional graph $G$ of type (b). However, these graphs are 3-chromatic and the corresponding cocliques in $M(G)$ are of size three (see Figure~\ref{fig:nonHC}). Thus any coclique in $M(G)$ of size at least 4 must correspond to a Hoffman coloring of an induced subgraph of $G$. This implies that if $H$ is a non-trivially Hoffman colorable exceptional graph and $H$ has a Hoffman coloring that corresponds to a coclique $C$ in $M(G)$, then $|C|\ge 3$ and every maximal coclique $C_{\max}$ of $M(G)$ that contains $C$ gives rise to a Hoffman colorable graph (because then either $C_{\max}$ is of size at least 4, or $C_{\max}=C$, which corresponds to the Hoffman coloring of $H$). Therefore, for determining the maximal Hoffman colorable exceptional graphs, we just have to determine the maximal cocliques in $M(G)$ that correspond to Hoffman colorings (as alluded to before). We can therefore again apply Algorithm \ref{algo:bc} but only with maximal cocliques in $M(G)$ instead of all cocliques in Step 9. Among the resulting Hoffman colorable graphs, we filter out the graphs that correspond both to a maximal coclique in $M(G_1)$ and to a non-maximal coclique $M(G_2)$ for $G_1\ne G_2$, and we get $M_i$ for $i\in \{1,2,3,4,6,7,8,9,10,21\}$ as maximal Hoffman colorable exceptional graphs.
\end{proof}

We have the following notes on the graphs from \thref{thm:typebc}. The 36 Hoffman colorable exceptional graphs $G$ from \thref{thm:typebc} naturally divide up into four classes, according to $\mathfrak C(G)$. For $E_8$-representations, we refer to Section~\ref{sec:representations}.
\begin{itemize}
    \item Exactly 17 graphs have $\mathfrak C(G)=\{2,5\}$. These graphs are chromatic components of at least one of the maximal Hoffman colorable exceptional graphs $M_{10}$ and $M_{21}$ (although note that $\mathfrak C(M_{21})=\{2,5,8\}$). These 17 graphs naturally partition into two subclasses.
    
    For 7 of these 17 graphs, every Hoffman coloring has exactly one color class of size 5. These 7 graphs have dichromatic components $C_4$ and $W_7$. The largest of these 7 graphs (the one on 17 vertices) is a chromatic component of $M_{21}$, but not of $M_{10}$. The other 6 graphs are a chromatic component of both $M_{10}$ and $M_{21}$. For a more detailed description of these 7 graphs, see Section~\ref{sec:application}.

    For the 10 remaining graphs (including the maximal Hoffman colorable exceptional graph $M_{10}$), every Hoffman coloring has exactly two color classes of size 5. These 10 graphs have dichromatic components $C_4$, $W_7$, and $W_6 \sqcup C_4$, and are themselves chromatic components of $M_{10}$.
    
    \item Exactly 6 graphs have $\mathfrak C(G)=\{2,5,8\}$. This includes the maximal Hoffman colorable exceptional graph $M_{21}$, of which the 5 other graphs are a chromatic component. These graphs have two types of Hoffman colorings (as seen for example for $M_{21}$ in Table~\ref{tab:mhe1}); one type with one color class of size 8 and the remaining color classes of size 2, and another type with two color classes of size 5 and the remaining color classes of size 2. The first type of Hoffman colorings induce dichromatic components $C_4$ and $2\cdot W_5$, while the second type of Hoffman colorings induce, in addition to $C_4$ and $2\cdot W_5$, dichromatic component $W_7$. The flexibility in Hoffman colorings for these 6 graphs can be explained by the disconnectedness of $2\cdot W_5$; flipping the colors assigned to the vertices of one connected component $2\cdot W_5$ leads to a Hoffman coloring of the other type (as explained above).
    
    \item Exactly 3 graphs have $\mathfrak C(G)=\{3,5\}$. These three graphs all have chromatic number 3, and are the maximal Hoffman colorable exceptional graphs $M_1$, $M_2$, and $M_3$, listed in Figure \ref{fig:3-5-graphs}. The graphs $M_1$ and $M_2$ have dichromatic components $\mathcal F_8$ and $W_6$, and the graph $M_3$ has dichromatic components $\mathcal F_8$ and $W_6 \sqcup C_4$.

    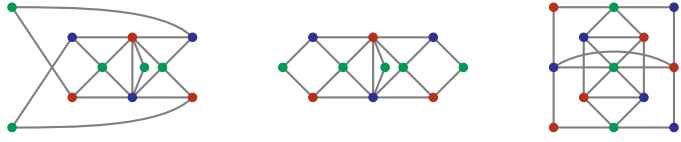
\begin{figure}[ht]
\begin{center}
\begin{tikzpicture}[scale=0.8]
    \coordinate (1) at (1,0.5);
    \coordinate (2) at (-1,0.5);
    \coordinate (3) at (0,-0.5);
    \coordinate (4) at (1,-0.5);
    \coordinate (5) at (-1,-0.5);
    \coordinate (6) at (0,0.5);
    \coordinate (7) at (0.2,0);
    \coordinate (8) at (1.5,0);
    \coordinate (9) at (-1.5,0);
    \coordinate (10) at (-0.5,0);
    \coordinate (11) at (0.5,0);
        \draw[gray, thick] (1) -- (2) -- (3) -- (1);
        \draw[gray, thick] (4) -- (5) -- (6) -- (4);
        \draw[gray, thick] (3) -- (6) -- (7) -- (3);
        \draw[gray, thick] (1) -- (8) -- (4);
        \draw[gray, thick] (2) -- (9) -- (5);
        \filldraw[ForestGreen] (9) circle (2pt);
        \filldraw[ForestGreen] (8) circle (2pt);
        \filldraw[BrickRed] (6) circle (2pt);
        \filldraw[Blue] (3) circle (2pt);
        \filldraw[ForestGreen] (10) circle (2pt);
        \filldraw[ForestGreen] (11) circle (2pt);
        \filldraw[ForestGreen] (7) circle (2pt);
        \filldraw[BrickRed] (4) circle (2pt);
        \filldraw[BrickRed] (5) circle (2pt);
        \filldraw[Blue] (1) circle (2pt);
        \filldraw[Blue] (2) circle (2pt);

    \coordinate (1b) at (-5,0.5);
    \coordinate (2b) at (-3,0.5);
    \coordinate (3b) at (-4,-0.5);
    \coordinate (4b) at (-5,-0.5);
    \coordinate (5b) at (-3,-0.5);
    \coordinate (6b) at (-4,0.5);
    \coordinate (7b) at (-3.8,0);
    \coordinate (8b) at (-6,1);
    \coordinate (9b) at (-6,-1);
    \coordinate (10b) at (-3.5,0);
    \coordinate (11b) at (-4.5,0);
    \coordinate (c1) at (-5,-1);
    \coordinate (c2) at (-3.5,-1);
    \coordinate (c3) at (-5,1);
    \coordinate (c4) at (-3.5,1);
        \draw[gray, thick] (1b) -- (2b) -- (3b) -- (1b);
        \draw[gray, thick] (4b) -- (5b) -- (6b) -- (4b);
        \draw[gray, thick] (3b) -- (6b) -- (7b) -- (3b);
        \draw[gray, thick] (2b) .. controls (c4) and (c3) .. (8b) -- (4b);
        \draw[gray, thick] (1b) -- (9b) .. controls (c1) and (c2) .. (5b);
        \filldraw[ForestGreen] (9b) circle (2pt);
        \filldraw[ForestGreen] (8b) circle (2pt);
        \filldraw[BrickRed] (6b) circle (2pt);
        \filldraw[Blue] (3b) circle (2pt);
        \filldraw[ForestGreen] (10b) circle (2pt);
        \filldraw[ForestGreen] (11b) circle (2pt);
        \filldraw[ForestGreen] (7b) circle (2pt);
        \filldraw[BrickRed] (4b) circle (2pt);
        \filldraw[BrickRed] (5b) circle (2pt);
        \filldraw[Blue] (1b) circle (2pt);
        \filldraw[Blue] (2b) circle (2pt);

\coordinate (1) at (3,1);
\coordinate (2) at (5,-1);
\coordinate (3) at (3,-1);
\coordinate (4) at (5,1);
\coordinate (5) at (3.5,-0.5);
\coordinate (6) at (4.5,-0.5);
\coordinate (7) at (3.5,0.5);
\coordinate (8) at (4.5,0.5);
\coordinate (9) at (3,0);
\coordinate (10) at (5,0);
\coordinate (11) at (4,1);
\coordinate (12) at (4,-1);
\coordinate (13) at (4,0);
\draw[gray,thick] (1) -- (9);
\draw[gray,thick] (11) -- (1);
\draw[gray,thick] (2) -- (10);
\draw[gray,thick] (12) -- (2);
\draw[gray,thick] (3) -- (9);
\draw[gray,thick] (12) -- (3);
\draw[gray,thick] (4) -- (10);
\draw[gray,thick] (11) -- (4);
\draw[gray,thick] (5) -- (6);
\draw[gray,thick] (5) -- (7);
\draw[gray,thick] (12) -- (5);
\draw[gray,thick] (13) -- (5);
\draw[gray,thick] (6) -- (8);
\draw[gray,thick] (12) -- (6);
\draw[gray,thick] (13) -- (6);
\draw[gray,thick] (7) -- (8);
\draw[gray,thick] (11) -- (7);
\draw[gray,thick] (13) -- (7);
\draw[gray,thick] (11) -- (8);
\draw[gray,thick] (13) -- (8);
\draw[gray,thick] (9) .. controls (3.5,0.35) and (4.5,0.35) .. (10);
\draw[gray,thick] (13) -- (9);
\draw[gray,thick] (13) -- (10);
\filldraw[BrickRed] (1) circle (2pt);
\filldraw[BrickRed] (3) circle (2pt);
\filldraw[BrickRed] (5) circle (2pt);
\filldraw[BrickRed] (8) circle (2pt);
\filldraw[BrickRed] (10) circle (2pt);
\filldraw[Blue] (2) circle (2pt);
\filldraw[Blue] (4) circle (2pt);
\filldraw[Blue] (6) circle (2pt);
\filldraw[Blue] (7) circle (2pt);
\filldraw[Blue] (9) circle (2pt);
\filldraw[ForestGreen] (11) circle (2pt);
\filldraw[ForestGreen] (12) circle (2pt);
\filldraw[ForestGreen] (13) circle (2pt);
    \end{tikzpicture}
\end{center}
\caption{The three maximal Hoffman colorable exceptional graphs that have color classes of sizes 3 and 5 ($M_{1}$, $M_2$, and $M_3$) with a Hoffman coloring. These are the three smallest maximal Hoffman colorable exceptional graphs.}
\label{fig:3-5-graphs}
\end{figure}
    
    \item Exactly 10 graphs have $\mathfrak C(G)=\{3,6\}$. This includes the maximal Hoffman colorable exceptional graphs $M_4$, $M_6$, $M_7$, $M_8$, and $M_9$. The 5 other graphs are a chromatic component of at least one of $M_6$, $M_7$, $M_8$, $M_9$. Each Hoffman coloring of each of these 10 graphs has one color class of size 6, and every other color class of size 3. Graph $M_4$ has dichromatic components $C_6$, $W_9$, and $W_5 \sqcup C_4$. The 9 other graphs have dichromatic components $C_6$ and $W_9$. 
\end{itemize}

\section{Hoffman colorable graphs with few vertices}\label{sec:application}
In this section, we prove \thref{thm:3chi}. We do this by showing that if $|V(G)|<3\chi(G)$ and $G$ is Hoffman colorable, then $\y_{\min}(G)\ge -2$ (\thref{lem:3chi-1}), after which we can apply \thref{thm:main}. We need the following lemma.

\begin{lem}\thlabel{lem:2-3}
    Let $G$ be a Hoffman colorable graph with a positive eigenvector. If $G$ has an optimal coloring with one color class of size 2, and one color class of size 3, then $G$ is bipartite.
\end{lem}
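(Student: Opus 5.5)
Take the two color classes $A$ (size 2) and $B$ (size 3) of the given optimal coloring. Since $G$ is Hoffman colorable, every optimal coloring is a Hoffman coloring, so this one is too. Let $D=G[A\sqcup B]$ be the resulting dichromatic component. By the Decomposition Theorem (\thref{thm:Decomp}), $D$ is Hoffman colorable and the restriction $x|_D$ of the positive eigenvector is a positive eigenvector of $D$. By \thref{prop:univ}, every vertex of $A$ has a neighbor in $B$ and every vertex of $B$ has a neighbor in $A$. So $D$ is a bipartite graph on $2+3$ vertices with no isolated vertices, and all its connected components have the same largest eigenvalue $\nu$. By \eqref{eq:eigenvalues of connected component of dichromatic component}, $\ymax(G)=(\chi-1)\nu$.

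Next, enumerate the possibilities for $D$. A component containing a vertex of $A$ and a vertex of $B$ has at least 2 vertices. If $D$ were disconnected, it would have to be $K_2\sqcup K_{1,2}$ (the component sizes are forced to be 2 and 3). These components have largest eigenvalues $1$ and $\sqrt2$, which are unequal, so this is impossible. Hence $D$ is connected, a spanning connected subgraph of $K_{2,3}$. That leaves the path $A_5$ ($\nu=\sqrt3$), $K_{2,3}$ ($\nu=\sqrt6$), the tree $D_5$-like graph (a $K_{1,3}$ centered at a vertex of $A$ plus one edge), and $K_{2,3}$ minus an edge. I would check these against the requirement that $x|_D$ is a Perron vector with $D$ in Hoffman coloring. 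That means $-\nu=\ymin(D)=\ymin(G)$, and the extremal (2,3) split should be compatible with the eigenvector equations.

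The key step is then to apply \thref{lem:Galois conjugate}. For each candidate $D$, compute $\nu^2$ and the minimal polynomial of $\nu$:
\begin{itemize}
\item $A_5$: $\nu=\sqrt3$, minimal polynomial $t^2-3$.
\item $K_{2,3}$: $\nu=\sqrt6$, minimal polynomial $t^2-6$.
\item The tree on 5 vertices with a degree-3 vertex ($D_5$): $\nu=2\cos(\pi/8)$, minimal polynomial $t^4-4t^2+2$.
\item $K_{2,3}$ minus an edge: characteristic polynomial $t(t^4-5t^2+2)$, so $\nu^2=(5+\sqrt{17})/2$, and the minimal polynomial $t^4-5t^2+2$ is even (it is irreducible since $\sqrt{17}\notin\Q$ and $\nu^2\cdot\ol{\nu^2}=2$ is not a square).
\end{itemize}
In every case the minimal polynomial of $\nu$ is even. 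So \thref{lem:Galois conjugate} applied to the component $K=D$ gives that $G$ is bipartite.

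The main obstacle is making the enumeration airtight, in particular using \thref{prop:univ} to rule out isolated vertices and the equal-spectral-radius condition to rule out the disconnected case. Beyond that, I only need to verify that the minimal polynomial of $\nu$ is even. This holds in general because a connected bipartite graph has $\nu^2$ as a root of a polynomial in $t^2$, but irreducibility must be handled. The robust route is to observe that for bipartite $D$ the characteristic polynomial is $t^{a}p(t^2)$. The minimal polynomial $f$ of $\nu$ divides it, and $f(-t)$ is also irreducible with root $-\nu$ dividing $p(t^2)$. If $f\neq \pm f(-t)$, then $\nu^2$ would have a minimal polynomial of degree $\deg f$ over $\Q$ equal to $\deg(f(t)f(-t))/2$. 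This is fine, but it means the general argument needs care. The explicit case computations above avoid this entirely.
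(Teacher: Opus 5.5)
Your proposal is correct and follows essentially the same route as the paper: pass to the $(2,3)$ dichromatic component, use \thref{thm:Decomp} and \thref{prop:univ} to reduce to the four connected spanning subgraphs of $K_{2,3}$, observe that their minimal polynomials $t^2-3$, $t^4-4t^2+2$, $t^4-5t^2+2$, $t^2-6$ are all even, and apply \thref{lem:Galois conjugate}; your explicit exclusion of $K_2\sqcup K_{1,2}$ via the unequal spectral radii $1$ and $\sqrt2$ is a step the paper leaves implicit. Your closing aside that evenness ``holds in general'' for connected bipartite graphs is false (e.g.\ $K_{1,4}$ has spectral radius $2$ with minimal polynomial $t-2$), but your argument rightly relies only on the explicit case computations.
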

\begin{proof}
    Let $H$ be the dichromatic component defined by the color class of $G$ of size 2 and the one of size 3. By \thref{thm:Decomp,prop:univ}, we know that $H$ has a positive eigenvector and minimum degree at least 1. There are only four possibilities for $H$: the path graph on five vertices, the Dynkin graph $D_5$, the graph obtained from adding a leaf to $C_4$, and $K_{2,3}$. The minimum polynomials of their largest eigenvalues are $x^2-3$, $x^4-4x^2+2$, $x^4-5x^2+2$, and $x^2-6$ respectively. By \thref{lem:Galois conjugate} we have that $G$ is bipartite.
\end{proof}

\begin{lem}\thlabel{lem:3chi-1}
    Let $G$ be a connected Hoffman colorable graph with a positive eigenvector such that $|V(G)|< 3\chi(G)$, that is neither bipartite nor complete. Then $\ymin(G)\ge -2$, and every optimal coloring of $G$ has no color classes of size 1, and at least two color classes of size 2.
\end{lem}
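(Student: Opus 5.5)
Fix an optimal (hence Hoffman) coloring of $G$ with $\chi=\chi(G)\ge 3$ classes; $G$ is non-bipartite and connected. I would first rule out classes of size 1. If some class has size 1, then $G$ is a cone graph by \thref{prop:univ}, say $G=\widehat{G'}$. By \thref{thm:conegraphs}, $G'$ is regular and Hoffman colorable with color classes of size $\ymin(G')^2$, and $\ymin(G')$ is an integer. If $\ymin(G')=-1$, then $G'$ has classes of size $1$ and is complete, so $G$ is complete, which is excluded. Otherwise $\ymin(G')\le -2$, so the classes of $G'$ have size at least $4$. Then $|V(G)|\ge 1+4(\chi-1)\ge 3\chi$ for $\chi\ge 3$, a contradiction. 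Hence all classes have size at least 2.

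Next, let $n_2$ be the number of classes of size 2. Every other class has size at least 3, so $|V(G)|\ge 2n_2+3(\chi-n_2)=3\chi-n_2$. The hypothesis $|V(G)|<3\chi$ then gives $n_2\ge 1$. In fact we need $n_2\ge 2$. Suppose $n_2=1$. Then some class has size exactly 2 and all others have size at least 3, and the total is less than $3\chi$. This is impossible, since $3\chi-1\le|V(G)|<3\chi$ forces $|V(G)|=3\chi-1$, which means all other classes have size exactly 3. Now $\chi\ge 3$ guarantees a class of size 3, and together with the class of size 2 this contradicts \thref{lem:2-3}, since $G$ is non-bipartite. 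So $n_2\ge 2$.

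For $\ymin(G)\ge-2$, take two classes of size 2 and consider their dichromatic component $H$. By \thref{thm:Decomp} it has a positive eigenvector, and by \thref{prop:univ} it has minimum degree at least 1. On four vertices, bipartite with parts of size 2, the only such graphs are $2K_2$ and $C_4$. By \eqref{eq:eigenvalues of connected component of dichromatic component}, $\ymin(G)=\ymin(K)$ for a component $K$ of $H$, which gives $-1$ or $-2$. In either case $\ymin(G)\ge -2$. (If $\ymin=-1$, then $h(G)=1+\ymax$, and the standard fact that graphs with $\ymin\ge -1$ are disjoint unions of cliques would make $G$ complete, which is excluded; so in fact $\ymin=-2$, but this is not needed.)

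The main obstacle I expect is the cone case: it requires the class sizes $\ymin(G')^2$ from \thref{thm:conegraphs} and careful exclusion of the complete case. The second delicate point is the counting argument that forces a $2$–$3$ pair of classes when $n_2=1$.
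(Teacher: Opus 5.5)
Your handling of color classes of sizes $1$ and $2$ is correct and uses the same ingredients as the paper: the cone characterization with class sizes $\lambda_{\min}(G')^2$, counting against $|V(G)|<3\chi(G)$, and \thref{lem:2-3} to exclude a size-$2$/size-$3$ pair. You should add that $G'$ is non-empty, since the cone theorem requires it; otherwise $G$ would be a star and hence bipartite.

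The error is in the last step. Your list of candidate dichromatic components on two classes of size $2$ is incomplete. Take the path $P_4$ with edges $ac,ad,bd$ between the parts $\{a,b\}$ and $\{c,d\}$. It is bipartite, has minimum degree $1$, and is connected, so it has a positive eigenvector. Its smallest eigenvalue is $-(1+\sqrt5)/2$. This case really occurs: the graph of Figure~\ref{fig:lollipoptensor} satisfies every hypothesis of the lemma (it has $6<9=3\chi$ vertices). Each of its three dichromatic components is a $P_4$, so its smallest eigenvalue is $-(1+\sqrt5)/2$. Consequently both of the following claims are false: your intermediate claim that $\lambda_{\min}(G)\in\{-1,-2\}$, and your parenthetical conclusion that in fact $\lambda_{\min}(G)=-2$.

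The inequality $\lambda_{\min}(G)\ge -2$ itself survives, and the repair needs no enumeration; this is how the paper argues. The dichromatic component $H$ is a subgraph of $K_{2,2}$, so $\lambda_{\max}(H)\le\lambda_{\max}(K_{2,2})=2$ by \thref{prop:subgraph}. Since $H$ is bipartite, $\lambda_{\min}(H)=-\lambda_{\max}(H)\ge -2$. Then \thref{thm:Decomp} gives $\lambda_{\min}(G)=\lambda_{\min}(H)\ge -2$.
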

\begin{proof}
    Consider an optimal coloring of $G$. Let $m_1\le m_2$ be the sizes of the two smallest color classes. If $m_2\ge 4$, then
    $$3\chi > |V(G)| \ge m_1+(\chi-1)m_2 \ge  1+(\chi-1)\cdot 4=4\chi-3,$$
    giving $\chi<3$, contradicting that $G$ is not bipartite. So $m_2\le 3$.
    
    Furthermore, since $|V(G)|<3\chi$ we have $m_1\le 2$. If $m_1=1$, then by $m_2\le 3$ and \thref{thm:conegraphs} we have $\ymin(G)=-1$, so that $G$ is complete (and hence trivially Hoffman colorable). So $m_1=2$, and hence by \thref{lem:2-3} also $m_2=2$. Now there are no color classes of size 1 and at least two color classes of size 2.
    
    Let $H$ be the dichromatic component of $G$ on the two color classes of size 2. Then $H$ is a subgraph of $K_{2,2}$, and so by \thref{prop:subgraph} we have $\ymax(H) \le \ymax(K_{2,2})=2$, and by bipartiteness $\ymin(H)\ge -2$. By \thref{thm:Decomp}, also $\ymin(G)\ge -2$, concluding the proof.
\end{proof}

We are now ready to prove \thref{thm:3chi}.

\begin{proof}[Proof of \thref{thm:3chi}]
    From \thref{lem:3chi-1} it follows that $\y_{\min}(G)\ge -2$, so that \thref{thm:main} gives that $G$ is a chromatically balanced generalized line graph, the graph from Figure \ref{fig:lollipoptensor}, or a chromatic component of one of 29 maximal Hoffman colorable exceptional graphs. We note that the graph from Figure \ref{fig:lollipoptensor} indeed satisfies $|V(G)| < 3\chi(G)$.

    Suppose that $G$ is a chromatically balanced generalized line graph, say $G=L(H,\chi(G))$. We may assume that cocktail party vertices belonging to the same pair get assigned the same color (as in the proof of \thref{thm:GL>-2}). If $C$ is a color class in such a coloring, then it consists of a matching $M$ in $H$, and a pair of cocktail party vertices for every vertex of $H$ that is not matched by $M$. Now $|C|=2|V(H)|-3|M|$. In order for $C$ to be of size 2 , there are now two possibilities (note $2|M|\le |V(H)|$), namely $|V(H)|=1$ and $|M|=0$, and $|V(H)|=4$ and $|M|=2$. If $|V(H)|=1$, then $G$ is a cocktail party graph (so trivially Hoffman colorable). If instead $|V(H)|=4$, then $H$ must contain $C_4$ as a subgraph since $G$ has at least two color classes of size 2 by \thref{lem:3chi-1}. Now $H$ is isomorphic to $C_4$, $K_{1,1,2}$, or $K_4$. By the constraint of $|V(G)|<3\chi(G)$, we must have that $G$ is isomorphic to $L(C_4,2)$ or $L(K_4,3)$, so $G$ is a cocktail party graph and therefore trivially Hoffman colorable.
    
    Lastly, if $G$ is an exceptional graph, then by Table~\ref{tab:mhe1} it must be a chromatic component of $M_{10}$ or $M_{21}$ since it must have a color class of size 2 by \thref{lem:3chi-1}. Since both $M_{10}$ and $M_{21}$ have an optimal coloring with every color class of size 2 or 5 and $G$ is non-trivially Hoffman colorable, $G$ now admits a coloring so that it has either 1 or 2 color classes of size 5. Let $A$ be this number, then $|V(G)|=2\chi(G)+3A$. To satisfy $|V(G)|<3\chi(G)$, we now need $3A < \chi(G)$. If $A=2$, then $\chi(G)\ge 7$, leading to three examples, namely $M_{10}$, $M_{21}$, and the unique chromatic component of $M_{21}$ of order 20, which is obtained after removing any color class of size 2. If $A=1$, then $\chi(G) \ge 4$, so that $G$ is of order $11$, $13$, $15$ or $17$, of which there are six (see Section~\ref{sec:representations}). This makes a total of ten connected non-trivially Hoffman colorable graphs with $|V(G)|<3\chi(G)$.
\end{proof}

We can give a more detailed description of the six graphs with $A=1$ in the above proof. Indeed, if $G$ is a connected Hoffman colorable graph with smallest eigenvalue $-2$ with one color class of size 5 and every other color class of size 2, we can describe all of its structure in a finite multiset as follows.

Note that the dichromatic component coming from any color class of size 2 and the color class of size 5 must be $W_7$. This implies that we can take a Perron eigenvector $x$ of $G$ assigning 2 to a unique vertex $v_0$ from the color class of size 5, and 1 to the other vertices of this class, say $v_1$, $v_2$, $v_3$, and $v_4$. Furthermore, by the structure of $W_7$ every vertex in a color class of size 2 is adjacent to $v_0$ and to exactly two vertices of $v_1$, $v_2$, $v_3$, and $v_4$, and every vertex $v_i$ for $1\le i \le 4$ is adjacent to exactly one vertex of every color class of size 2. We now take the multiset $S$ consisting of the unique $i$ with $1\le i \le 3$ such that $v_i$ and $v_4$ share a vertex of $C$, for every color class $C$ of size 2. Since the dichromatic component coming from a pair of color classes of $G$ of size 2 must be $C_4$, this completely determines the graph structure of $G$, and we also see that $S$ is of cardinality $\chi(G)-1$.

Next, we note that we can interchange the labels of $v_1$, $v_2$, and $v_3$, so we can assume without loss of generality that in $S$, the multiplicity of $1$ is at least the multiplicity of $2$, which is at least the multiplicity of $3$. Now note that the multiplicity of $1$ cannot be $3$; one can check that the chromatic component of $G$ constructed in this way with $S=\{1^3\}$ has smallest eigenvalue $-\sqrt 6$. As its largest eigenvalue is $6$, the Hoffman bound is not an integer and so the graph is not Hoffman colorable. This means that, up to symmetry, $S$ can take the following forms (assuming $\chi(G)\ge 2$): $\{1\}$, $\{1^2\}$, $\{1,2\}$, $\{1^2,2\}$, $\{1,2,3\}$, $\{1^2, 2^2\}$, $\{1^2,2,3\}$, $\{1^2,2^2,3\}$, and $\{1^2, 2^2, 3^2\}$.

This describes all chromatic components of $M_{10}$ and $M_{21}$ with exactly one color class of size 5, and also proves in another way that a connected Hoffman colorable graph with color classes of sizes 2 and 5 can have at most six color classes of size 2 if there is at least one color class of size 5 (as also follows from Table~\ref{tab:mhe1}), as we see in $M_{21}$. In particular, note that there are six possible sets $S$ of cardinality at least 3; these give the six chromatic components of $M_{21}$ of odd order mentioned in \thref{thm:3chi}.

\section{Representations of the Hoffman colorable exceptional graphs and the maximal \texorpdfstring{$E_7$}{}-representable graphs}\label{sec:representations}
We provide (references to) $E_8$-representations for each of the $245$ non-trivially Hoffman colorable exceptional graphs and $E_7$-representations for each of the $39$ maximal $E_7$-representable graphs. We discuss the eight classes of Hoffman colorable exceptional graphs (see Table~\ref{tab:muchi}) in the same order as in Section \ref{sec:exceptional}.

\subsection{The 174 graphs \texorpdfstring{$G$ from Section \ref{sec:regexcepcone} with $\mathfrak C(G)$ equal to $\{3\}$, $\{4\}$, or $\{1,4\}$}{}}
For the $E_8$-representations of the graphs $G$ with $\mathfrak C(G)$ equal to $\{3\}$ or $\{4\}$ (the regular Hoffman colorable exceptional graphs), we refer to \cite[Table A3]{-2}, where switching classes of line graphs of graphs on eight vertices are given. One can obtain $E_8$-representations from these by $E_8$-representing edges from the switching class with vectors $a_{ij}$, and edges outside the switching class with vectors $b_{ij}$, like in \thref{prop:SeidelNbh}. Among the regular exceptional graphs of the first layer, all graphs with order divisible by 4 are Hoffman colorable, except for the graphs with identifiers 7, 8, 10, 36, 41, 42, 53, 55, 56, 65, and 66. Among the regular exceptional graphs of the second layer, all are Hoffman colorable except for the graphs with identifiers 165, 168, 169, 175. The regular exceptional graphs with identifiers 181 and 184 are the maximal Hoffman colorable exceptional graphs $M_{20}$ and $M_{24}$, respectively.

For the $E_8$-representations of the graphs $G$ with $\mathfrak C(G)=\{1,4\}$ (the Hoffman colorable exceptional cone graphs, say $G=\widehat H$), recall \thref{lem:excepcone}. We take an $E_8$-representation of $H$ (either as described above from \cite[Table A3]{-2} if $H$ is exceptional, or using vectors $a_{ij}$ for $ij \in V(H)$ if $H$ is the line graph of a regular graph on eight vertices), and including the vector $e$ for the cone vertex. The graphs $M_{i}$ for $i\in \{11,12,\dots,20\}$ are the cones over regular exceptional graphs of the first layer with identifiers 113--120 (also see Figure~\ref{fig:11-19}). Representations of $M_{26}$, $M_{27}$, $M_{28}$, and $M_{29}$ (which are the cones over $L(K_8)$ and the three Chang graphs, see Figure~\ref{fig:Chang and Schlaefli}) can also be found in \cite[Table A6.1]{-2}.

\subsection{The 35 graphs \texorpdfstring{$G$ from Section \ref{sec:typea} with $\mathfrak C(G)=\{3,4\}$}{}}
In Table \ref{tab:3-4-graphs} we give representations for the 35 graphs of the form $(\widehat G)_C$ corresponding to full lines in Figure \ref{fig:HasseLayer2} as explained in Section \ref{sec:typea}. Also see Figures \ref{fig:F7exa} and \ref{fig:MHEtypea}.

The $E_8$-representations given in Table \ref{tab:3-4-graphs} are determined as follows. If $G$ is exceptional, then the $E_8$-representation of $(\widehat G)_C$ is obtained from the $E_8$-representation for $G$ from \cite[Table A3.2.2]{-2}, and then switching with respect to $C$ (replacing $a_{ij}$ with $b_{ij}$ whenever $a_{ij}$ represents a vertex in $C$, and replacing $b_{ij}$ with $a_{ij}$ whenever $b_{ij}$ represents a vertex in $C$). Different choices of $C$ may lead to different $E_8$-representations of the same graph. If possible, we choose $C$ in such a way that removing the three vectors representing the vertices of $C$ in the $E_8$-representation of $G$ results in the $E_8$-representation of $G\setminus C$ given in \cite[Table A3.2.2]{-2}. Otherwise we take an arbitrary $C$ that works. If $G$ is not exceptional, then the same is done as above, but starting with $E_8$-representations of $L(K_{3,3})$, $L(K_3\square K_2)$, and $L(CP(3))$ that are subrepresentations of the $E_8$-representations given in \cite[Table A3.2.2]{-2} of graphs 166, 167, and 171 respectively, and with the $E_8$-representation of $L(K_6)$ using $a_{ij}$ for all $3\le i <j \le 8$.

\begin{table}
    \begin{center}
        \begin{tabular}{|c|c|c||c|c|}
            \hline
            graph & $G$ & $G\setminus C$ & $ij$'s for vectors $a_{ij}$ & $ij$'s for vectors $b_{ij}$\\
            \hline
            \hline
            & \scriptsize{$L(K_{3,3})$} & \scriptsize{$C_6$} & \scriptsize{36,37,45,47,58,68} & \scriptsize{12,35,46,78}\\
            \hline
            \scriptsize{Figure \ref{fig:F7exa}} & \scriptsize{164} & \scriptsize{$C_6$} & \scriptsize{13,24,35,36,45,47,68,78} & \scriptsize{12,34}\\
            \hline
            & \scriptsize{$L(K_{3}\square K_2)$} & \scriptsize{$C_6$} & \scriptsize{36,37,45,48,57,68} & \scriptsize{12,35,46,78}\\
            \hline
            & \scriptsize{166} & \scriptsize{$L(K_{3,3})$} & \scriptsize{13,24,35,36,37,45,46,47,58,68,78} & \scriptsize{12,34}\\
            \hline
            & \scriptsize{166} & \scriptsize{164} & \scriptsize{34,35,36,45,47,68,78} & \scriptsize{12,13,24,37,46,58}\\
            \hline
            & \scriptsize{170} & \scriptsize{164} & \scriptsize{14,23,35,36,37,45,48,67,68} & \scriptsize{12,13,26,34}\\
            \hline
            & \scriptsize{167} & \scriptsize{164} & \scriptsize{34,35,37,45,46,68,78} & \scriptsize{12,13,24,36,48,57}\\
            \hline
            & \scriptsize{167} & \scriptsize{$L(K_3\square K_2)$} & \scriptsize{13,24,35,36,37,45,46,48,57,68,78} & \scriptsize{12,34}\\
            \hline
            & \scriptsize{$L(CP(3))$} & \scriptsize{$L(K_3\square K_2)$} & \scriptsize{35,36,37,45,46,48,57,68,78} & \scriptsize{12,38,47,56}\\
            \hline
            & \scriptsize{174} & \scriptsize{166} & \scriptsize{14,23,35,36,37,38,45,46,47,58,68,78} & \scriptsize{12,13,28,34}\\
            \hline
            & \scriptsize{174} & \scriptsize{170} & \scriptsize{34,35,36,38,45,47,68,78} & \scriptsize{12,13,14,23,28,37,46,58}\\
            \hline
            & \scriptsize{176} & \scriptsize{166} & \scriptsize{15,24,34,35,36,47,58,67} & \scriptsize{12,13,14,16,23,25,27,45}\\
            \hline
            & \scriptsize{176} & \scriptsize{170} & \scriptsize{14,23,35,36,45,47,58,67} & \scriptsize{12,13,15,16,24,25,27,34}\\
            \hline
            & \scriptsize{176} & \scriptsize{167} & \scriptsize{15,23,34,36,45,47,58,67} & \scriptsize{12,13,14,16,24,25,27,35}\\
            \hline
            & \scriptsize{173} & \scriptsize{170} & \scriptsize{34,35,37,38,46,47,56,58} & \scriptsize{12,13,14,23,25,36,45,78}\\
            \hline
            & \scriptsize{173} & \scriptsize{167} & \scriptsize{14,23,35,36,37,38,45,46,47,56,58,78} & \scriptsize{12,13,25,34}\\
            \hline
            & \scriptsize{172} & \scriptsize{167} & \scriptsize{14,26,34,35,36,37,45,48,57,58,67,68} & \scriptsize{12,13,25,46}\\
            \hline
            & \scriptsize{171} & \scriptsize{167} & \scriptsize{34,35,36,37,45,46,48,57,68,78} & \scriptsize{12,13,24,38,47,56} \\
            \hline
            & \scriptsize{171} & \scriptsize{$L(CP(3))$} & \scriptsize{13,24,35,36,37,38,45,46,47,48,56,57,68,78} & \scriptsize{12,34}\\
            \hline
            \scriptsize{$M_5$} & \scriptsize{$L(K_6)$} & \scriptsize{$L(CP(3))$} & \scriptsize{35,36,37,38,45,46,47,48,56,57,68,78} & \scriptsize{12,34,58,67}\\
            \hline
            & \scriptsize{179} & \scriptsize{174} & \scriptsize{15,24,34,35,36,37,38,46,47,48,56,57,58} & \scriptsize{12,13,14,23,25,45}\\
            \hline
            & \scriptsize{180} & \scriptsize{174} & \scriptsize{35,36,37,45,46,48,78} & \scriptsize{12,13,14,15,16,23,24,27,28,34,57,68}\\
            \hline
            & \scriptsize{180} & \scriptsize{176} & \scriptsize{14,28,34,35,36,37,45,46,57,68,78} & \scriptsize{12,13,15,16,23,24,27,48}\\
            \hline
            & \scriptsize{180} & \scriptsize{173} & \scriptsize{15,27,34,35,36,37,45,46,48,68,78} & \scriptsize{12,13,14,16,23,24,28,57}\\
            \hline
            & \scriptsize{178} & \scriptsize{173} & \scriptsize{15,28,34,35,36,37,45,46,48,57,67,68,78} & \scriptsize{12,13,14,26,27,58}\\
            \hline
            & \scriptsize{177} & \scriptsize{173} & \scriptsize{34,36,37,38,45,46,48,56,57,58,67} & \scriptsize{12,13,14,25,26,35,47,68}\\
            \hline
            & \scriptsize{177} & \scriptsize{172} & \scriptsize{34,35,36,37,45,46,48,57,58,67,68} & \scriptsize{12,13,14,25,26,38,47,56}\\
            \hline
            & \scriptsize{177} & \scriptsize{171} & \scriptsize{14,25,34,35,36,37,38,46,47,48,56,57,58,67,68} & \scriptsize{12,13,26,45}\\
            \hline
            & \scriptsize{182} & \scriptsize{179} & \scriptsize{35,36,37,38,45,46,47,48,56,78} & \scriptsize{12,13,14,15,16,23,24,27,28,34,57,68}\\
            \hline
            & \scriptsize{182} & \scriptsize{180} & \scriptsize{34,35,36,37,45,46,48,57,68,78} & \scriptsize{12,13,14,15,16,23,24,27,28,38,47,56}\\
            \hline
            & \scriptsize{182} & \scriptsize{178} & \scriptsize{14,23,35,36,37,38,45,46,47,48,56,57,68,78} & \scriptsize{12,13,15,16,24,27,28,34}\\
            \hline
            \scriptsize{$M_{22}$} & \scriptsize{181} & \scriptsize{178} & \scriptsize{34,35,36,37,45,46,48,57,58,67,68,78} & \scriptsize{12,13,14,15,26,27,28,38,47,56}\\
            \hline
            \scriptsize{$M_{23}$} & \scriptsize{181} & \scriptsize{177} & \scriptsize{35,36,37,38,45,46,47,48,57,58,67,68} & \scriptsize{12,13,14,15,26,27,28,34,56,78}\\
            \hline
            & \scriptsize{183} & \scriptsize{182} & \scriptsize{16,25,34,35,36,37,38,45,46,47,48,57,58,67,68} & \scriptsize{12,13,14,15,17,23,24,26,28,56}\\
            \hline
            \scriptsize{$M_{25}$}& \scriptsize{184} & \scriptsize{183} & \scriptsize{18,27,34,35,36,37,38,45,46,47,48,56,57,58,67,68} & \scriptsize{12,13,14,15,16,17,23,24,25,26,28,78} \\
            \hline
        \end{tabular}
    \end{center}
    \caption{Representations in $E_8$ of the 35 graphs $(\widehat G)_C$ from Section \ref{sec:typea}.}
    \label{tab:3-4-graphs}
\end{table}

\subsection{The 36 graphs from Section \ref{sec:typebc}}
As said in Section \ref{sec:typebc}, these graphs naturally divide into four classes, according to $\mathfrak C(G)$ being $\{2,5\}$, $\{2,5,8\}$, $\{3,5\}$, or $\{3,6\}$.

\subsubsection{The 17 graphs \texorpdfstring{$G$ with $\mathfrak C(G)=\{2,5\}$}{}}\label{sec:2-5-represent}
As said in Section \ref{sec:typebc}, this class naturally subdivides into two subclasses: the 7 graphs for which each Hoffman coloring has exactly one color class of size 5, and the 10 graphs (including $M_{10}$) for which each Hoffman coloring has exactly two color classes of size 5.

For the 7 graphs with an optimal coloring having one color class of size 5, we can $E_8$-represent such a class of size 5 with the vectors $a_{15}, a_{26}, a_{37}, a_{48},d_{5678}$. The remaining vertices can then be $E_8$-represented with vectors $a_{ij}$ for $ij\in A$ and $b_{ij}$ for $ij\in B$, for the following 7 choices of $A$ and $B$:
\begin{itemize}[noitemsep]
    \item $A=\{12,13,24,34\},B=\emptyset$;
    \item $A=\{12,13,24,34\},B=\{56,78\}$;
    \item $A=\{12,13,14,23,24,34\},B=\emptyset$;
    \item $A=\{12,13,24,34\},B=\{56,57,68,78\}$;
    \item $A=\{12,13,14,23,24,34\},B=\{56,78\}$;
    \item $A=\{12,13,14,23,24,34\},B=\{56,57,68,78\}$;
    \item $A=\{12,13,14,23,24,34\},B=\{56,57,58,67,68,78\}$.
\end{itemize}

For the 10 graphs (including $M_{10}$) with an optimal coloring having two color classes of size 5, we may $E_8$-represent those classes with the vectors $a_{13},a_{26},c_{26},a_{47},c_{47}$ for the first class, and the vectors $a_{23},a_{15},c_{15},a_{48},c_{48}$ for the second class. The following 5 choices of vectors for the remaining vertices each give 2 of the 10 graphs in this class; one including in addition the vectors $a_{12}$ and $a_{34}$, and one without, making ten graphs in total:
\begin{itemize}[noitemsep]
    \item $b_{ij}$ for $ij\in\{56,78\}$;
    \item $b_{ij}$ for $ij\in\{56,57,68,78\}$;
    \item $b_{ij}$ for $ij\in\{57,58,67,68\}$;
    \item $b_{ij}$ for $ij\in\{56,57,58,67,68,78\}$;
    \item $d_{5678}$, $e$, and $b_{ij}$ for $ij\in\{56,57,58,67,68,78\}$.
\end{itemize}

The last choice, together with vectors $a_{12}$ and $a_{34}$ gives a representation of $M_{10}$.

\subsubsection{The 6 graphs \texorpdfstring{$G$ with $\mathfrak C(G)=\{2,5,8\}$}{}}\label{sec:2-5-8-represent}
As said in Section \ref{sec:typebc}, these graphs all have $2\cdot W_5$ as a dichromatic component, enabling these graphs to be colored with either one color class of size 8 and all other color classes of size 2, or with two color classes of size 5 and all other color classes of size 2. This class contains graph $M_{21}$.

To represent these 6 graphs in $E_8$, we can represent the dichromatic component $2\cdot W_5$ using the vectors $a_{15},c_{15},a_{26},c_{26},a_{37},c_{37},a_{48},c_{48},d_{5678},e$. The remaining color classes of the 6 graphs in this class can then be represented using vectors $a_{ij}$ for $ij\in A$ and $b_{ij}$ for $ij\in B$ for the following 6 choices of $A$ and $B$:
\begin{itemize}[noitemsep]
    \item $A=\{12,13,24,34\},B=\emptyset$;
    \item $A=\{12,13,24,34\},B=\{58,67\}$;
    \item $A=\{12,13,14,23,24,34\},B=\emptyset$;
    \item $A=\{12,13,14,23,24,34\},B=\{56,78\}$;
    \item $A=\{12,13,14,23,24,34\},B=\{56,57,68,78\}$;
    \item $A=\{12,13,14,23,24,34\},B=\{56,57,58,67,68,78\}$ (giving $M_{21}$).
\end{itemize}

\subsubsection{The 3 graphs \texorpdfstring{$G$ with $\mathfrak C(G)=\{3,5\}$}{}}
These graphs are $M_1$, $M_2$, and $M_3$ (see Figure~\ref{fig:3-5-graphs}). We have the following $E_8$-representations:
\begin{itemize}[noitemsep]
    \item $M_1$: $a_{ij}$ for $ij\in \{12,15,17,26,28,35,38,46,47\}$, $b_{56}$, and $d_{5678}$,
    \item $M_2$: $a_{ij}$ for $ij\in\{12,15,17,26,28,35,37,46,48\}$, $b_{56}$, and $d_{5678}$,
    \item $M_3$: $a_{ij}$ for $ij\in\{12,13,14,23,24,35,46,57,68\}$, $b_{34}$, $c_{57}$, $c_{68}$, and $d_{3478}$.
\end{itemize}

\subsubsection{The 10 graphs \texorpdfstring{$G$ with $\mathfrak C(G)=\{3,6\}$}{}}
These graphs include the maximal Hoffman colorable exceptional graphs $M_4$, $M_6$, $M_7$, $M_8$, $M_9$. We can represent the color class of size 6 in $E_8$ using the vectors $a_{17}$, $c_{17}$, $a_{28}$, $c_{28}$, $a_{35}$, and $a_{46}$. For the remaining vertices, we might use the vectors $a_{ij}$ for $ij\in A$ and $b_{ij}$ for $ij\in B$ for the following 10 choices of $A$ and $B$:
\begin{itemize}[noitemsep]
    \item $A=\{15,24,34,36\},B=\{37,48\}$;
    \item $A=\{13,24,34,56\},B=\{37,48\}$;
    \item $A=\{13,15,24,26,36,45,56\},B=\{58,67\}$;
    \item $A=\{14,15,23,26,34,36,56\},B=\{37,68\}$;
    \item $A=\{13,14,23,26,34,45,56\},B=\{38,47\}$;
    \item $A=\{12,14,23,34,36,45,56\},B=\{37,48\}$ (giving $M_4$);
    \item $A=\{13,15,24,26,34,36,45,56\},B=\{38,47,58,67\}$ (giving $M_6$);
    \item $A=\{13,15,24,26,34,36,45,56\},B=\{38,47,57,68\}$ (giving $M_7$);
    \item $A=\{13,14,23,26,34,36,45,56\},B=\{37,38,47,68\}$ (giving $M_8$);
    \item $A=\{13,14,15,23,24,26,34,36,45,56\},B=\{37,48\}$ (giving $M_9$).
\end{itemize}

\subsection{The 39 maximal \texorpdfstring{$E_7$}{}-representable graphs}
The 39 graphs naturally divide into three types, similar to the maximal exceptional graphs (see Section~\ref{sec:preliminaries}); those on 17 vertices with maximal degree 16 (type (a)), those on more than 17 vertices with maximal degree 16 (type (b)), and those with maximal degree less than 16 (type (c)). There are 27 maximal $E_7$-representable graphs of type (a), 10 of type (b), and 2 of type (c). The Schläfli graph is of type (b).

We give $E_7$-representations of these graphs, where we choose $E_7$ to be the vectors in $E_8$ that are orthogonal to $d_{1234}$ for the representations of the graphs of type (a) and (b), and $E_7$ to be the vectors in $E_8$ that are orthogonal to $e$ for the representations of the graphs of type (c).
\subsubsection{Type (a)}
For the representations of the 27 maximal $E_7$-representable graphs of type (a), we include the vector $e$ for the universal vertex, the vectors $a_{ij}$ for $ij\in A$, the vectors $b_{ij}$ for $ij\in B$, for $A \sqcup B = \{ij : 1\le i \le 4, 5 \le j \le 8\}$ and the following 27 choices of $A$:
\begin{center}
    \begin{tabular}{ccc}
        $\emptyset$ & $\{15,16,17,28\}$ & $\{15,16,17,25,36\}$ \\
        $\{15\}$ & $\{15,16,25,27\}$ & $\{15,16,17,28,38\}$ \\
        $\{15,16\}$ & $\{15,16,27,28\}$ & $\{15,16,25,27,36\}$\\
        $\{15, 26\}$ & $\{15,16,25,37\}$ &$\{15,16,25,27,38\}$\\
        $\{15,16,17\}$ & $\{15,16,27,38\}$ &$\{15,16,27,37,48\}$\\
        $\{15,16,27\}$ & $\{15,16,27,37\}$ &$\{15,16,17,25,36,47\}$\\
        $\{15,16,25\}$ & $\{15,26,37,48\}$ &$\{15,16,17,28,38,48\}$\\
        $\{15,26,37\}$ & $\{15,16,17,25,28\}$&$\{15,16,25,27,36,37\}$\\
        $\{15,16,17,25\}$ & $\{15,16,17,25,35\}$ &$\{15,16,17,18,25,35,45\}$\\
    \end{tabular}
\end{center}

\subsubsection{Type (b)}
For the representations of the 10 maximal $E_7$-representable graphs of type (b), we include the vector $e$ for one of the vertices of degree 16, the vectors $a_{ij}$ for $ij \in A$, the vectors $b_{ij}$ for $ij\in B$, the vectors $c_{ij}$ for $ij\in C$, and the vectors $d_{ijk\ell}$ for $ijk\ell \in D$, for $A \sqcup B = \{\{i,j\} : 1\le i \le 4, 5 \le j \le 8\}$ and the following 10 choices of $A$, $C$, and $D$:
\begin{itemize}[noitemsep]
    \item $A=\{15,16,17,18\}$, $C=\{12,13,14\}$, $D=\emptyset$;
    \item $A=\{15,16,25,26\}$, $C=\emptyset$, $D=\{3478\}$;
    \item $A=\{15,16,17,18,25\}$, $C=\{13,14\}$, $D=\emptyset$;
    \item $A=\{15,16,17,25,26\}$, $C=\emptyset$, $D=\{3478\}$;
    \item $A=\{15,16,17,18,25,35\}$, $C=\{14\}$, $D=\emptyset$;
    \item $A=\{15,16,17,25,26,27\}$, $C=\emptyset$, $D=\{3458,3468,3478\}$;
    \item $A=\{15,16,17,25,26,28\}$, $C=\emptyset$, $D=\{3478\}$;
    \item $A=\{15,16,17,25,26,35\}$, $C=\emptyset$, $D=\{3478\}$;
    \item $A=\{15,16,17,18,25,26,27\}$, $C=\{13,14\}$, $D=\{3458,3468,3478\}$;
    \item $A=\{15,16,17,18,25,26,27,28\}$, $C=\{13,14,23,24\}$, $D=\{3456,3457,3458,3467,3468,3478\}$.
\end{itemize}
The last choice gives a representation of the Schläfli graph.

\subsubsection{Type (c)}
For the two graphs of type (c) we have the following representations. For the first graph, we take $c_{i8}$ for $i\in \{1,\dots, 7\}$, and $d_{ijk8}$ for $ijk\in \{145,167,246,257,347,356,456,457,467,567\}$. For the second graph, we take $c_{i8}$ for $i\in \{1,\dots,7\} $, and $d_{ijk8}$ whenever $ijk$ forms a line in the Fano plane, for example $ijk\in \{123,145,167,246,257,347,356\}$.


\begin{thebibliography}{99}
\bibitem{Abiad} A. Abiad. A characterization and an application of weight-regular partitions of graphs. \textit{Linear Algebra Appl.}, 569:162--174, 2019.

\bibitem{previouspaper} A. Abiad, W. Bosma, and T. van Veluw. Hoffman colorings of graphs. \textit{Linear Algebra Appl.}, 710:129--150, 2025.

\bibitem{(s)rg} A. Abiad, B. De Bruyn, and T. van Veluw. Hoffman colorability of (strongly) regular graphs. \textit{Discrete Math.}, 349(3): no. 114856, 2026.

\bibitem{Beers} L. Beers and R. Mulas. At the end of the spectrum: chromatic bounds for the largest eigenvalue of the normalized Laplacian. \textit{J. Phys. Complex.}, 6(2): no. 025006, 2025.

\bibitem{3chromDRG} A. Blokhuis, A. E. Brouwer, and W. H. Haemers. On 3-chromatic distance-regular graphs. \textit{Des. Codes Cryptogr.}, 44(1--3):293--305, 2007.

\bibitem{spectra} A. E. Brouwer and W. H. Haemers. \textit{Spectra of graphs}. Universitext. Springer, New York, 2012.

\bibitem{CGSS} P. J. Cameron, J.-M. Goethals, J. J. Seidel, and E. E. Shult. Line graphs, root systems, and elliptic geometry. \textit{J. Algebra}, 43(1):305--327, 1976.

\bibitem{NL} G. Coutinho, R. Grandsire, and C. Passos. Colouring the normalized Laplacian. \textit{Electron. Notes Theor. Comput. Sci.}, 346:345--354, 2019.

\bibitem{-2} D. Cvetkovi\'{c}, P. Rowlinson, and S. Simi\'{c}. \textit{Spectral generalizations of line graphs. On graphs with least eigenvalue $-2$}. London Math. Soc. Lect. Note Ser., 314. Cambridge University Press, Cambridge, 2004.

\bibitem{ProefschriftHaemers} W. H. Haemers. \emph{Eigenvalue techniques in design and graph theory}. Dissertation, Technische Hogeschool Eindhoven, 1979.

\bibitem{ratiobound} W. H. Haemers. Hoffman's ratio bound. \textit{Linear Algebra Appl.}, 617:215--219, 2021.

\bibitem{spreads} W. H. Haemers and V. D. Tonchev. Spreads in strongly regular graphs. \textit{Des. Codes Cryptogr.}, 8(1--2):145--157, 1996.

\bibitem{KargerEtAl} D. Karger, R. Motwani, and M. Sudan. Approximate graph coloring by semidefinite programming. \textit{J. ACM}, 45(2):246--265, 1998.

\bibitem{Lovasz} L. Lov\'{a}sz. On the Shannon capacity of a graph. \textit{IEEE Trans. Inform. Theory}, 25, no. 1:1--7, 1979.

\bibitem{QuantumHom} L. Man\v{c}inska and D. E. Roberson. Quantum homomorphisms. \textit{J. Comb. Theory, Ser. B}, 118:228--267, 2016.

\bibitem{atlas} R. C. Read and R. J. Wilson. \textit{An atlas of graphs}. The Clarendon Press, Oxford University Press, New York, 1998.

\bibitem{sage} The Sage developers. SageMath, the Sage Mathematics Software System, \url{https://www.sagemath.org}. Version 10.6, 2025.

\bibitem{minpol} W. Watkins and J. Zeitlin. The minimal polynomial of $\cos(2\pi/n)$. \emph{Amer. Math. Monthly}, 100(5):471--474, 1993.

\end{thebibliography}
\end{document}